\documentclass{amsart}

\title[Non-commutative Fourier-Mukai duality]{A Non Commutative Grauert Theorem and Fourier Mukai Duality for Generalized Complex Tori}
\author{Patrick Antweiler}
\address{Fakultät für Algebra und Zahlentheorie, Universität Hamburg, Germany}
\email{patrick.antweiler@uni-hamburg.de}

\author{Jonathan Block}
\address{Department of Mathematics, University of Pennsylvania, Philadelphia, PA 19104}
\email{blockj@math.upenn.edu}

\usepackage[top=2.5cm,bottom=2.5cm,left=4cm,right=4cm]{geometry}
\usepackage{amsfonts}

\usepackage{graphicx}
\usepackage{float}
\usepackage{amsmath}
\usepackage{amssymb}
\usepackage{amsthm}
\usepackage{bbm}
\usepackage{comment}
\usepackage{hyperref}
\usepackage{enumitem} 
\usepackage[utf8]{inputenc}
\usepackage[T1]{fontenc}
\usepackage{lmodern}
\usepackage{mathrsfs}
\usepackage{changepage}
\usepackage{tikz-cd} 
\usepackage{multirow}
\usepackage[mathscr]{eucal}
\usepackage[toc,page]{appendix}
\usepackage{wasysym}
\usepackage{adjustbox}
\usepackage{blindtext}
\usepackage{makecell}
\usepackage{url}
\usepackage{pgfplots}
\usepackage{tikz}
\usepackage{tikz-3dplot}
\pgfplotsset{compat=1.15}
\usepackage{mathrsfs}
\usepackage{mathtools}
\usetikzlibrary{arrows}
\usepackage{todonotes}
\usepackage{circledsteps}

\DeclareMathOperator{\HH}{HH}

\newcommand{\Wedge}{\scalebox{1.2}{$\wedge$}}
\newcommand{\Dcoh}{D^b_{\mathrm{coh}}}
\newcommand{\A}{\mathcal{A}}

\newenvironment{nalign}{
	\begin{equation}
		\begin{aligned}
		}{
		\end{aligned}
	\end{equation}
	\ignorespacesafterend
}

\theoremstyle{definition}

\newtheorem{def1}{Definition}[subsection]

\newtheorem{remark}[def1]{Remark}
\newtheorem{example}[def1]{Example}

\theoremstyle{theorem}
\newtheorem{lemma}[def1]{Lemma}
\newtheorem{corollary}[def1]{Corollary}
\newtheorem{theorem}[def1]{Theorem}
\newtheorem*{theorem*}{Theorem}
\newtheorem*{proposition*}{Proposition}
\newtheorem{proposition}[def1]{Proposition}

\newtheoremstyle{named}{}{}{\itshape}{}{\bfseries}{.}{.5em}{\thmnote{#3}#1}
\theoremstyle{named}
\newtheorem*{namedtheorem}{}

\begin{document}

\begin{abstract}
    We prove a generalization of Grauert's higher coherence theorem for a class of curved differential graded (non-commutative) Fréchet algebras. This allows us to extend the Fourier-Mukai calculus to derived categories arising in many new contexts. We then apply it and prove equivalence of derived categories of dual generalized complex tori using a non-commutative version of the Poincaré line bundle. This lays the foundation for categories of generalized complex branes on generalized complex tori. Examples include complex tori, symplectic tori as well as their non-commutative and B-field deformations.
\end{abstract}

     \maketitle

	\tableofcontents
	\section{Introduction}
Categories associated to mathematical objects provide powerful invariants. In some cases, the category provides a complete invariant. For example, we have the classic theorem of Gabriel, \cite{Gabriel1962}:
\begin{theorem*}[Gabriel] Two algebraic varieties $X$ and $Y$ are isomorphic if and only if their abelian categories of coherent sheaves $ \text{Coh}(X)$ and $\text{Coh}(Y)$ are equivalent. 
\end{theorem*}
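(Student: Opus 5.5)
The ``only if'' direction is immediate: an isomorphism $f\colon X\to Y$ induces an equivalence $f^*\colon \text{Coh}(Y)\to \text{Coh}(X)$, with quasi-inverse $f_*$. For the converse, the plan is to reconstruct $X$, as a ringed space, intrinsically from the abelian category $\mathcal{C}=\text{Coh}(X)$, using only notions invariant under equivalence. Any equivalence $\text{Coh}(X)\simeq\text{Coh}(Y)$ then transports the reconstruction and yields $X\cong Y$. I will work with the category $\text{Qcoh}(X)=\mathrm{Ind}(\text{Coh}(X))$ of quasi-coherent sheaves. Since $X$ is noetherian, this is the category of ind-objects, so it is determined by $\text{Coh}(X)$ and any equivalence extends.

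\textbf{Step 1 (points).} Recover the underlying topological space via indecomposable injectives. On a noetherian scheme, every injective quasi-coherent sheaf is a direct sum of indecomposable injectives. These are exactly the sheaves $i_{x*}E(k(x))$: the pushforward from $\operatorname{Spec}\mathcal{O}_{X,x}$ of the injective hull of the residue field, one for each scheme-theoretic point $x$. Let $\mathrm{Sp}(\mathcal{C})$ be the set of isomorphism classes of indecomposable injectives in $\mathrm{Ind}(\mathcal{C})$. This gives a bijection between $\mathrm{Sp}(\mathcal{C})$ and the points of $X$. The key input is the Matlis-type classification over noetherian local rings, applied affine-locally, together with the fact that these injectives glue because injectivity is local on a noetherian scheme.

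\textbf{Step 2 (topology and structure sheaf).} Define closed subsets categorically. For each Serre subcategory $\mathcal{S}\subset\mathcal{C}$, take the set of indecomposable injectives $E$ with $\operatorname{Hom}(S,E)\neq 0$ for some $S\in\mathcal{S}$. For $\mathcal{S}$ the sheaves supported on a closed set $Z$, this set is exactly $Z$. One then checks that closed sets of $X$ correspond to Serre subcategories of coherent sheaves with given support, and that these are cut out by the single objects $\mathcal{O}_Z$, so the topology is recovered. For an open $U$ with complement $Z$, the Gabriel quotient $\mathcal{C}/\mathcal{C}_Z$ is equivalent to $\text{Coh}(U)$. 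The ring $\mathcal{O}_X(U)$ is then recovered as the center of the category $\text{Qcoh}(U)$, that is, the endomorphisms of its identity functor, since the center of $\text{Qcoh}(U)$ is $\Gamma(U,\mathcal{O}_U)$. Restriction maps come from the quotient functors. This gives a sheaf of rings, and hence the ringed space $(X,\mathcal{O}_X)$, entirely from $\mathcal{C}$.

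\textbf{Main obstacle.} The hardest part is the localization theory. It has two pieces: identifying the quotient $\text{Qcoh}(X)/\text{Qcoh}_Z(X)$ with $\text{Qcoh}(U)$, which uses the extension of coherent sheaves from $U$ to $X$ and noetherianity, and proving that the center of $\text{Qcoh}(U)$ equals $\mathcal{O}(U)$. For the center, I would first prove the affine case, where $\operatorname{Spec}A$ gives center $A$ because natural endomorphisms of the identity commute with all $A$-linear maps out of $A$. I would then glue over an affine cover using that the center forms a sheaf. Once these identifications are in place, naturality shows that an equivalence $\text{Coh}(X)\simeq\text{Coh}(Y)$ induces a homeomorphism together with a compatible isomorphism of structure sheaves, hence $X\cong Y$.
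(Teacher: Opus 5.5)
The paper does not prove this statement. It quotes Gabriel's theorem in the introduction, with a citation to \cite{Gabriel1962}, purely as motivation, so there is no internal argument to compare yours against. Your outline is essentially Gabriel's original reconstruction: points are isomorphism classes of indecomposable injectives in the locally noetherian category $\text{Qcoh}(X)=\mathrm{Ind}(\text{Coh}(X))$, closed sets are supports, and the structure sheaf is given by centers of Serre quotients. The outline is sound, but two points should be tightened.

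\textbf{The topology.} The intrinsic definition should use single objects: the closed sets are $\{E : \operatorname{Hom}(S,E)\neq 0\}$ for a single $S\in\text{Coh}(X)$. These sets are exactly the supports $\operatorname{Supp}(S)$, and hence exactly the closed subsets of $X$, since every closed $Z$ equals $\operatorname{Supp}(\mathcal O_Z)$. If you instead let $\mathcal S$ range over arbitrary Serre subcategories, you obtain arbitrary specialization-closed subsets, which need not be closed. For example, the finite-length sheaves supported on an infinite set of closed points of $\mathbb A^1$ form such a subcategory. That formulation alone therefore does not recover the topology.

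\textbf{What kind of isomorphism you get.} The reconstruction gives an isomorphism of ringed spaces, and between schemes this is automatically an isomorphism of schemes, because the stalk maps are isomorphisms and hence local. This is the correct reading of the statement only if ``isomorphic'' means isomorphic as schemes. Take a complex elliptic curve $E$ and a field automorphism $\sigma$ with $\sigma(j(E))\neq j(E)$. The conjugate $E^\sigma$ is isomorphic to $E$ as an abstract scheme, so $\text{Coh}(E)\simeq\text{Coh}(E^\sigma)$, but $E^\sigma$ is not isomorphic to $E$ as a $\mathbb C$-variety. To obtain a $k$-isomorphism of varieties, you should assume the equivalence is $k$-linear. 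Then the induced isomorphism of centers, $\eta\mapsto F\eta F^{-1}$, is a $k$-algebra isomorphism, and so is the resulting isomorphism of structure sheaves.
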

\vspace{0.3cm}

This is the reason why in noncommutative algebraic geometry à la Artin-Zhang and Rosenberg \cite{Rosenberg1998}, a noncommutative variety is defined in terms of  an abelian category with extra structure satisfying some conditions. 

From the category $\text{Coh}(X)$ one can form the category $D^b_{\text{coh}}(X)$, the bounded derived category of coherent sheaves. Objects in this category are bounded complexes of sheaves of $\mathcal{O}_X$-modules with coherent cohomology.
Now it is no longer true that $D^b_{\text{coh}}(X)$ determines $X$. 
But Bondal and Orlov, \cite{BondalOrlov2002} proved the following theorem.
\begin{theorem*} (Bondal - Orlov) If $X$ is a smooth projective variety with ample canonical or anti-canonical class, then $X$ is determined by $D^b_{\text{coh}}(X)$.
\end{theorem*}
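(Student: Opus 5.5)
The plan is to reconstruct $X$ as a variety from intrinsic invariants of the triangulated category $\mathcal{D}=\Dcoh(X)$, which are then automatically preserved by any exact equivalence. The essential intrinsic tool is the Serre functor: $S_X=(-)\otimes\omega_X[n]$ with $n=\dim X$ satisfies
\[
\operatorname{Hom}(A,B)\cong\operatorname{Hom}(B,S_X A)^{\vee},
\]
and by this universal property it is unique up to isomorphism. Hence any exact equivalence $F\colon\Dcoh(X)\to\Dcoh(Y)$ commutes with the Serre functors. I will assume, as the statement implicitly does, that the equivalence is exact, and I treat the case of ample $\omega_X$; the anti-ample case is identical with $\omega_X^{-1}$.

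\textbf{Step 1: point objects.} Call $P$ a point object of codimension $s$ if $S(P)\cong P[s]$, $\operatorname{Hom}^{<0}(P,P)=0$, and $\operatorname{Hom}^0(P,P)=k(P)$ is a field. I will show that when $\omega_X^{\pm1}$ is ample the point objects are exactly the shifted skyscrapers $k(x)[r]$. Skyscrapers clearly qualify, since $\omega_X\otimes k(x)\cong k(x)$. For the converse, take a point object $P$ with cohomology sheaves $\mathcal{H}^i$. The condition $P\otimes\omega_X\cong P[s-n]$ forces $s=n$, by comparing the indices of the nonzero cohomology, and then gives $\mathcal{H}^i\otimes\omega_X\cong\mathcal{H}^i$. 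Ampleness implies that a coherent sheaf stable under twisting by $\omega_X$ has zero-dimensional support: otherwise Hilbert polynomials with respect to $\omega_X$ would grow. Next, I will show the support is connected and a single point, using the field condition (a disconnected support splits $P$ into a direct sum). Finally, a spectral sequence argument with the extreme cohomology sheaves $\mathcal{H}^{i_0}$ and $\mathcal{H}^{i_1}$ produces a nonzero map $P\to\mathcal{H}^{i_1}[-i_1]\to\dots$ of negative degree unless $P$ is a single sheaf. That sheaf is supported at one point, has endomorphism ring a field, and hence is $k(x)$. I expect this step to be the main obstacle.

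\textbf{Step 2: invertible objects.} Call $L$ invertible if for every point object $P$ there is an $s$ with $\operatorname{Hom}^{s}(L,P)=k(P)$ and $\operatorname{Hom}^{i}(L,P)=0$ for $i\neq s$. Because point objects are the $k(x)[r]$, a standard argument shows the invertible objects are exactly the shifted line bundles $\mathcal{L}[m]$: the fibre cohomology is concentrated in one degree at every point, so the object is locally free of rank one up to shift.

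\textbf{Step 3: reconstruct $X$.} Since $\mathcal{O}_X$ is invertible, $F(\mathcal{O}_X)$ is invertible, so after shifting and twisting we may assume $F(\mathcal{O}_X)=\mathcal{O}_Y$. Compatibility with Serre functors then gives $F(\omega_X^{k})=\omega_Y^{k}$, and also $\omega_Y$ is (anti-)ample. Indeed, $F$ identifies point objects, so $Y$ also has point objects exactly its skyscrapers, and the sign of ampleness can be detected via $\dim\operatorname{Hom}(\mathcal{O},S^k\mathcal{O}[-kn])$ growing polynomially. Further, $F$ induces isomorphisms
\[
\operatorname{Hom}(\omega^{i},\omega^{j})\cong H^0(\omega^{j-i}),
\]
and these are compatible with composition. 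This yields a graded ring isomorphism
\[
\bigoplus_k H^0(X,\omega_X^{k})\cong\bigoplus_k H^0(Y,\omega_Y^{k}),
\]
and taking $\operatorname{Proj}$ of the canonical ring gives $X\cong Y$.
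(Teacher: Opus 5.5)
The paper does not prove this statement. It is quoted from \cite{BondalOrlov2002} as background, so the comparison here is with the standard Bondal--Orlov argument, whose outline (point objects, invertible objects, canonical ring) you follow. Steps 1 and 2 are fine as sketches. Two small points in Step 3 should be made explicit. First, $\dim X=\dim Y$, because $F$ preserves the codimension of point objects; you need this for $F(\omega_X^k)\cong\omega_Y^k$. Second, \emph{every} point object of $\Dcoh(Y)$ is a shifted skyscraper, which needs an orthogonality argument: if $F(k(x_0))$ were not of that form, each $k(y)$ would be $F(k(x)[r])$ with $x\neq x_0$, so $F(k(x_0))$ would admit no nonzero maps to shifted skyscrapers and hence be $0$.

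The genuine gap is the ampleness of $\omega_Y$ (resp.\ $\omega_Y^{-1}$). Without it, $\operatorname{Proj}$ of the canonical ring need not be $Y$. Polynomial growth of $\dim\operatorname{Hom}(\mathcal O_Y,S_Y^k\mathcal O_Y[-kn])=h^0(\omega_Y^k)$ only shows that $\omega_Y$ is big. For a minimal surface of general type containing a $(-2)$-curve, $\omega_Y$ is big and nef but not ample, and $\operatorname{Proj}$ of its canonical ring is the canonical model, with the $(-2)$-curves contracted. You also give no argument that ``point objects of $Y$ are exactly skyscrapers'' forces $\omega_Y^{\pm1}$ to be ample.

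The missing idea is to detect nonvanishing loci of sections inside the category. For $s\in\operatorname{Hom}(\mathcal O_X,\omega_X^k)$ and a closed point $x$, $s(x)\neq0$ if and only if precomposition with $s$, $\operatorname{Hom}(\omega_X^k,k(x))\to\operatorname{Hom}(\mathcal O_X,k(x))$, is nonzero. We have $F(\mathcal O_X)\cong\mathcal O_Y$, $F(\omega_X^k)\cong\omega_Y^k$ and $F(k(x))\cong k(f(x))$, where the shift is $0$ because $\operatorname{Hom}(\mathcal O_Y,F(k(x)))\neq0$. Hence the bijection $f$ on closed points carries $X_s$ onto $Y_{F(s)}$. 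Now choose $k$ with $\omega_X^k$ very ample. The sections of $\omega_Y^k$ then have no common zero and separate closed points, so $|\omega_Y^k|$ gives a morphism $Y\to\mathbb P^N$ that is injective on closed points. This morphism is quasi-finite and proper, hence finite, so $\omega_Y^k$ is ample. Only then does your $\operatorname{Proj}$ comparison yield $X\cong Y$.
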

\vspace{0.3cm}

But for other varieties it is not true. When it is \underline{not} true, equivalence between $D^b_{\text{coh}}(X)$ and $D^b_{\text{coh}}(Y)$ can underlay some deep and interesting geometrical relationships between $X$ and $Y$.

An early example was provided by Mukai \cite{Mukai1981} who extended the duality of abelian varieties to an equivalence of their derived categories.
Let $X$ be an abelian variety, or more generally, a complex torus $X = V / \Gamma$, where $V$ is a complex vector space, and $\Gamma$ is a lattice in $V$. Classically one defines another complex torus $X^{\vee}$, the dual complex torus, which is the moduli space of degree $0$ holomorphic line bundles $L$ on $X$ (equivalently, $L$ is topologically trivial).

Now there exists a ``universal'' or ``tautological'' Poincar\'{e} line bundle $P$ on $X \times X^{\vee}$, such that for a degree $0$ line bundle $L\in X^\vee$, $P\big|_{X \times \{L\}} \cong L$, and $P\big|_{\{0\} \times X^{\vee}}$ is trivial. 

\vspace{0.2cm}

\begin{theorem*}  (Mukai Duality) 
$ D^b_{\text{coh}}(X) \ni F \longmapsto Rp_{2*}(p_1^* F \otimes^L P) \in D^b_{\text{coh}}(X^{\vee})$ induces an equivalence of categories. 
\end{theorem*}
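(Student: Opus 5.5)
The plan is to follow Mukai's original strategy: compute the composite of $\Phi_P$ with the transform in the reverse direction, and show that this composite is a shift of $(-1)^*$. Write
\[
\Phi_P(F)=Rp_{2*}(p_1^*F\otimes^L P),\qquad \Psi(G)=Rp_{1*}(p_2^*G\otimes^L P),
\]
where $\Psi$ goes from $D^b_{\text{coh}}(X^\vee)$ to $D^b_{\text{coh}}(X)$ and uses the same Poincar\'e bundle $P$, now regarded as a kernel on $X^\vee\times X^{\vee\vee}=X^\vee\times X$.

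First we need $\Phi_P$ to land in $D^b_{\text{coh}}(X^\vee)$. The map $p_2$ is proper, and Grauert's direct image theorem shows that $Rp_{2*}$ preserves bounded coherent complexes. It is this step that the paper later generalizes to the non-commutative setting.

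Next, the composition of Fourier--Mukai transforms is again a Fourier--Mukai transform. Using flat base change and the projection formula, $\Psi\circ\Phi_P$ has kernel
\[
K=Rp_{13*}\bigl(p_{12}^*P\otimes p_{23}^*P\bigr)\quad\text{on } X\times X.
\]
We then use the theorem of the square, in the form $(m\times 1)^*P\cong p_{13}^*P\otimes p_{23}^*P$ on $X\times X\times X^\vee$. This rewrites $K$ as the pullback, along the difference map $\delta(x,y)=x+y$ (up to sign conventions), of $R\pi_*P$, where $\pi\colon X\times X^\vee\to X$.

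The key computation is
\[
R\pi_*P\cong k(0)[-g],\qquad g=\dim X.
\]
For $x\neq 0$, the restriction $P|_{\{x\}\times X^\vee}$ is a nontrivial degree-$0$ line bundle, so all its cohomology vanishes. Cohomology and base change then shows that $R\pi_*P$ is supported at $0$. To identify the stalk, we use Serre/Grothendieck duality together with $H^g(X^\vee,\mathcal O)\cong\mathbb C$, which pins it down as a skyscraper in degree $g$. This gives $K\cong\mathcal O_{\Gamma}[-g]$, where $\Gamma$ is the graph of $-1$, and hence
\[
\Psi\circ\Phi_P\cong(-1)^*[-g].
\]
Symmetrically, $\Phi_P\circ\Psi\cong(-1)^*[-g]$. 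Since $(-1)^*[-g]$ is an autoequivalence, $\Phi_P$ has both a left and a right quasi-inverse up to autoequivalence, and is therefore an equivalence.

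The main obstacle is the vanishing and support computation for $R\pi_*P$, namely identifying it as a skyscraper in the correct degree. I would first try to handle this via the explicit description $X^\vee=\bar V^*/\Gamma^*$. Cohomology of flat line bundles on a torus can be computed by Dolbeault cohomology with constant coefficients twisted by the character. This gives the vanishing for nontrivial characters directly, and for the trivial character gives the exterior algebra $\wedge^\bullet\bar V^*$. The fibre computation then comes from a Koszul resolution near $0$.
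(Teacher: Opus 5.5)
Your proposal is Mukai's original argument and is correct, modulo one step discussed below. It is not the route the paper takes. The paper does not reprove this statement directly; it is quoted from Mukai. Instead it recovers it as the case $\beta_i=B_i=0$ of Theorem~\ref{ThmNCFM}, after replacing $D^b_{\text{coh}}(X)$ by $H^0(\mathcal P_{\mathcal A(X)})$ via Block's theorem.

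The overall structure matches yours: both convolutions of kernels are shown to be shifted identity kernels, so the functors are two-sided inverses. The ingredients differ:
\begin{itemize}
\item Classical Grauert is replaced by the Grauert principle for Fr\'echet--Fredholm cdg-algebras (Theorem~\ref{GrauertPrinciple}).
\item The Poincar\'e bundle is realized as the Heisenberg module $\mathcal S(V)$ with a $\bar\partial$-connection.
\item The convolution $P_1\hat\otimes_{\mathcal A(X^\vee)}P_2$ is computed globally rather than fibrewise. After a Fourier transform (Lemma~\ref{LemmaAllEqs}), the diagonal action is divided out by periodization $\mathcal S(V)/\mathcal S(\Gamma)\cong C^\infty(V/\Gamma)$ (Lemma~\ref{FormulasLemma}). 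The $X^\vee$-directions of the connection then become a Koszul differential $\mathbb E_0$, given by multiplication by linear holomorphic coordinates.
\item Exactness of $0\to\mathcal S(\mathbb C)\xrightarrow{z}\mathcal S(\mathbb C)\to\mathbb C[[\bar z]]\to 0$, via Malgrange plus nuclearity (Corollary~\ref{CorBanachStable}, Proposition~\ref{PropCruc}), identifies the result with the diagonal bimodule up to Fr\'echet twist-equivalence.
\end{itemize}
Because $P_2$ is built from $\rho$, which is essentially the dual kernel, the paper lands on the diagonal rather than on the graph of $-1$. This difference is cosmetic.

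What each route buys: yours is shorter and rests on standard tools. But each of those tools (restriction to $\{x\}\times X^\vee$, the theorem of the square and seesaw, cohomology and base change, Grothendieck duality) uses points and fibres. These have no meaning once the algebra is deformed by $\beta$ or twisted by $B$. The paper's route survives those deformations. The price is proving its own Grauert theorem and localizing at Fr\'echet twist-equivalences, because the relevant exact sequences do not split continuously.

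The step that needs more than you state is the length of the skyscraper. Duality together with $H^g(X^\vee,\mathcal O)\cong\mathbb C$ only yields $R\pi_*P\cong(\mathcal O_{X,0}/I)[-g]$, with $\mathcal O_{X,0}/I$ cyclic of finite length. A thickened point such as $I=(x_1^2,x_2,\dots,x_g)$ is not excluded by that argument, and then $K$ would be a thickening of the graph of $-1$ rather than $\mathcal O_\Gamma[-g]$. To force $I=\mathfrak m$ you need the first-order universality of $P$, i.e.\ injectivity of the Kodaira--Spencer map $T_0X\to H^1(X^\vee,\mathcal O)$, as in Mumford's argument. Alternatively, the explicit Koszul computation you sketch at the end does the job, since there the $(0,1)$-part of the character depends holomorphically and nondegenerately on $x$. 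That same nondegeneracy is what makes $\mathbb E_0$ a genuine Koszul differential in the paper's computation.
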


\vspace{0.2cm}

An exciting aspect of constructing equivalences of derived categories is that it can relate categories of a very different nature. We mention two examples:
\begin{enumerate} \item Beilinson-Bernstein \cite{Beilinson-Bernstein81} establish equivalences of derived categories of representations of a semi-simple Lie algebra $\mathfrak{g}$ with the derived category of $D$-modules on the associated flag variety $\mathcal{B}$, relating the representation theory of $\mathfrak{g}$ to the geometry of $\mathcal{B}$.
\item Kontsevich's homological mirror symmetry conjecture \cite{Kontsevich1995} predicts an equivalence of categories between the derived category of coherent sheaves on a Calabi-Yau manifold and the Fukaya category of its mirror. This relates the complex geometry of one manifold to the symplectic geometry of another in a deep and unexpected way.
\end{enumerate}

\textbf{Deformations of }$D^b_{\text{coh}}(X)$\textbf{.}
 Deformations of a smooth algebraic variety $X$ are classically determined by $H^1(X; T_X)$. Motivated by his work on mirror symmetry, Kontsevich advocated for considering generalized deformations, by  deforming the category $D^b_{\text{coh}}(X)$ (really its dg-enhancement). Here the infinitesimal deformations of $D^b_{\text{coh}}(X)$ 
are controlled by its Hochschild Cohomology,  which one can compute as

\begin{proposition*}\label{Hoch}
\[\begin{array}{lclclcl}
HH^2(D^b_{\text{coh}}(X)) & \cong &H^2(X; \mathcal{O}) &\oplus &H^1(X; T_X) &\oplus &H^0(X; \wedge^2 T_X)\\
\text{Deformations as}&  & \text{Deformations as} & &\text{Deformations as} & &\text{Deformations as} \\
\text{a $A_{[0,\infty)}$-category}& &\text{a gerbe} & &\text{a complex } & &\text{a noncommutative} \\
 & & &&\text{manifold}&&\text{complex manifold} 
\end{array}\]
\end{proposition*}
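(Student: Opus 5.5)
We compute Hochschild cohomology of the dg-enhancement of $D^b_{\text{coh}}(X)$ geometrically, via the Hochschild–Kostant–Rosenberg theorem. First, by the results of Keller and Toën, the Hochschild cohomology of the dg-enhancement of $D^b_{\text{coh}}(X)$ agrees with the Hochschild cohomology of the variety $X$ itself. This is
\[
HH^\bullet(X) = \mathrm{Ext}^\bullet_{X\times X}(\mathcal{O}_\Delta,\mathcal{O}_\Delta),
\]
where $\Delta: X\to X\times X$ is the diagonal. This identifies deformations of the category with deformations of the identity functor, whose Fourier–Mukai kernel is $\mathcal{O}_\Delta$.

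Second, we use the HKR isomorphism for smooth $X$ in characteristic zero. It gives a quasi-isomorphism
\[
L\Delta^*\Delta_*\mathcal{O}_X \simeq \bigoplus_{i} \Omega^i_X[i],
\]
and dually $R\mathcal{H}om(\Delta_*\mathcal{O}_X,\Delta_*\mathcal{O}_X)\simeq \Delta_*\bigoplus_p \wedge^p T_X[-p]$. Taking hypercohomology, this formality yields
\[
HH^n(X)\cong \bigoplus_{p+q=n} H^q(X;\wedge^p T_X).
\]
Setting $n=2$ gives exactly the three summands $H^2(\mathcal{O})$, $H^1(T_X)$ and $H^0(\wedge^2T_X)$. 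Note that $H^2(\mathcal{O})=H^2(X;\wedge^0T_X)$.

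Third, we interpret the summands.
\begin{itemize}
\item A class in $H^0(\wedge^2 T_X)$ is a bivector. To first order it deforms the multiplication on $\mathcal{O}_X$ as $f\star g = fg+\epsilon\,\pi(df,dg)$, which is a noncommutative deformation.
\item A class in $H^1(T_X)$ is a Kodaira–Spencer class, giving a deformation of the complex structure.
\item A class in $H^2(\mathcal{O}_X)$ twists the transition cocycles of sheaves, giving a deformation as an $\mathcal{O}^\times$-gerbe.
\end{itemize}
Beyond these summands, $HH^2$ of a dg-category in general also measures curved or $A_\infty$ deformations with an $m_0$ term. This is the reason for the label $A_{[0,\infty)}$.

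The main obstacle is the HKR step. Globally the decomposition requires choosing a splitting, and it holds in characteristic zero by the work of Swan, Yekutieli and Kontsevich. We would prove it locally via the Koszul resolution of the diagonal and then glue the local pieces using the $\mathfrak{S}_n$-antisymmetrization map, which is canonical.
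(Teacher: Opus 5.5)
Your argument takes the same route the paper relies on. The paper gives no separate proof: it quotes the standard computation $HH^\bullet(D^b_{\mathrm{coh}}(X))\cong \mathrm{Ext}^\bullet_{X\times X}(\Delta_*\mathcal O_X,\Delta_*\mathcal O_X)\cong \bigoplus_{p+q=\bullet}H^q(X;\wedge^p T_X)$ (later realized through the continuous Hochschild cohomology of the Dolbeault algebra), which is exactly identification with the self-Ext of the diagonal followed by HKR. One correction to your commentary: the $A_{[0,\infty)}$ label describes all of $HH^2$, not something ``beyond'' the three summands. In fact the gerbe summand $H^2(X;\mathcal O)$ is precisely the part realized by curvature ($m_0$) terms, as with $2\pi i B^{0,2}$ in the curved Dolbeault algebra $\mathcal A(X,B)$.
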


\noindent In general, a deformation can have components in all three directions.

If one has an equivalence of categories $\Phi: \Dcoh(X) \longrightarrow \Dcoh(Y)$, then it induces an isomorphism on Hochschild Cohomology:
$$ \HH^*\Phi : \HH^*(\Dcoh(X)) \xrightarrow{\cong} \HH^*(\Dcoh(Y)) $$
\textbf{Interesting phenomenon:} $\HH^*\Phi$ can mix the different directions of deformation.
For example, in the case of a complex torus $X = V / \Gamma$:
\[\begin{array}{rcccl}
H^0(X; \Wedge^2 T_X) &\cong &\Wedge^2 V &\cong& H^2(X^\vee; \mathcal{O}) \\
H^1(X; \Wedge^1 T_X) &\cong &V \otimes \bar{V}^\vee& \cong& H^1(X^\vee; \Wedge^1 T_{X^\vee}) \\
H^2(X; \mathcal{O}) &\cong &\Wedge^2 \bar{V}^\vee &\cong& H^0(X^\vee; \Wedge^2 T_{X^\vee})
\end{array}\]
One can show that the equivalence induced by the Poincar\'{e} line bundle $P$ implements the isomorphisms from the left side to the right. So deforming the torus $X$ in a gerby direction will correspond to deforming $X^\vee$ in a non-commutative direction.

Now one can ask if we can implement the correspondences between the deformations by geometrically defined objects. At the level of infinitesimal deformations, and using deformation quantization, this was implemented in the complex torus case by Ben-Bassat, Block and Pantev \cite{BenBassat}. They constructed a gerbe over $X$ determined by an element in $H^2(X,\mathcal{O})$ and a noncommutative, deformation quantization of $X^\vee$ determined by the corresponding element in $H^0(X^\vee;\Wedge^2 \bar{V}^\vee)$.  They also showed that the Poincar\'e line bundle deforms, and implements an equivalence of the derived categories. This was further extended by Arinkin, Block, and Pantev \cite{ABP2013} to a general theorem, between complex manifolds whose derived categories are equivalent through a Fourier-Mukai kernel. But this was still carried out in the formal deformation case. 

The critical point for these infinitesimal deformations is that one can the deformed Poincaré line bundles by using sheaves on spaces. In the non-formal case, one does not have access to sheaf theory, and one has to use global differential geometric constructions to define the deformed objects. 

The second author \cite{Block2006} began a program to extend this theorem to non-formal deformations. The first step taken in that paper was to describe the derived category of a complex manifold $X$, or even better, a dg-enhancement of the derived category, in global differential geometric terms appropriate to the kind of non-formal deformations we are considering. 
    
    Recall from \cite{Block2006} that if $X$ is a compact complex analytic manifold, then the derived category of $X$ can be computed in terms of the Dolbeault algebra $\mathcal A(X) = (\Omega^{0,*}(X),\wedge,\bar \partial, 0)$:
	\begin{theorem*}[\cite{Block2006}]
		There is an equivalence of triangulated categories
		\[ H^0(\mathcal P_{\mathcal A(X)}) \cong D^b_{coh}(X).\] 
	\end{theorem*}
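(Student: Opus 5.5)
The plan is to build a dg-functor from the dg-category $\mathcal P_{\mathcal A(X)}$ of cohesive modules to a dg-enhancement of $D^b_{coh}(X)$, and then show it is fully faithful and essentially surjective on homotopy categories. Recall that a cohesive module is a finitely generated projective graded module $E^\bullet$ over $\mathcal A^0 = C^\infty(X)$, bounded in degree, equipped with a $\mathbb{Z}$-connection $\mathbb E = \mathbb E^0 + \mathbb E^1 + \mathbb E^2+\cdots$ satisfying $\mathbb E\circ\mathbb E = 0$. Here $\mathbb E^1$ is a $\bar\partial$-connection and $\mathbb E^k$ is $\Omega^{0,k}$-linear for $k\neq 1$.

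The functor sends $(E^\bullet,\mathbb E)$ to the complex of sheaves $U\mapsto E^\bullet\otimes_{\mathcal A^0}\mathcal A^{\bullet}(U)$ with differential $\mathbb E$. This is a complex of fine $\mathcal O_X$-modules, namely modules over the sheaf of Dolbeault complexes. The first step is to show that its cohomology sheaves are coherent. Locally on a small polydisc one filters by form degree. The $E_0$ page computes the cohomology of $\mathbb E^0$, and $\mathbb E^1$ induces holomorphic structures on it. The key local lemma I would prove is that, after shrinking, $(E,\mathbb E)$ is gauge equivalent to $(\mathcal V\otimes \mathcal A,\ \mathbb E^0+\bar\partial)$, where $\mathcal V$ is a bounded complex of holomorphic vector bundles. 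I would prove this by inductively killing $\mathbb E^k$ for $k\ge2$ using $\bar\partial$-acyclicity on Stein (polydisc) opens, exploiting the vanishing of higher Dolbeault cohomology there. Once this holds, the associated complex is quasi-isomorphic to a bounded complex of locally free $\mathcal O$-modules, hence has coherent cohomology.

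Full faithfulness comes next. Morphism complexes in $\mathcal P_{\mathcal A}$ are $\mathcal A$-linear maps with the induced differential. They are the global sections of a complex of fine sheaves, and by the local lemma this complex computes $R\mathcal Hom(\mathcal V_1,\mathcal V_2)$. Since fine sheaves are $\Gamma$-acyclic, $H^0\mathrm{Hom}_{\mathcal P}(E_1,E_2)\cong \mathrm{Ext}^0(F(E_1),F(E_2))$, and the derived category Hom on the right is computed by $R\Gamma R\mathcal Hom$.

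The main obstacle is essential surjectivity. Given a bounded complex with coherent cohomology, I need to produce a global cohesive module, without assuming $X$ is projective, so no global locally free resolutions exist. I would first reduce to a single coherent sheaf $F$, using the triangulated structure and the cone construction in $\mathcal P_{\mathcal A}$. Then I would tensor with smooth functions: $F\otimes_{\mathcal O}\mathcal A^{0,\bullet}$ is a module over the soft sheaf of smooth functions, and since $C^\infty$ is flat over $\mathcal O$ (Malgrange), it is quasi-isomorphic to $F$. Locally $F$ has finite free holomorphic resolutions by Syzygy, so $F\otimes C^\infty$ has finite resolutions by smooth vector bundles. The task is to glue these local smooth resolutions into a global finite complex $E^\bullet$ of smooth bundles with $\mathbb E^0$. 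I would do this by a descending-induction construction: surjecting a global smooth bundle onto the relevant $C^\infty_X$-module, using compactness and partitions of unity, and taking kernels, which are locally projective by the local resolutions. The Koszul--Malgrange style step is then to extend $\mathbb E^0$ to a full flat $\mathbb Z$-connection. I would construct $\mathbb E^1,\mathbb E^2,\ldots$ order by order, using a homological perturbation argument: the obstruction at each stage lies in the cohomology of the exact complex $(\mathrm{End}(E),[\mathbb E^0,-])$ in appropriate degrees, and since $\mathbb E^0$ is a resolution these obstructions vanish. Finally, the resulting cohesive module maps quasi-isomorphically to $F\otimes\mathcal A$.
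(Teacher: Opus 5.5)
The paper gives no proof of this statement; it cites \cite{Block2006}. Your architecture is broadly Block's: sheafify, use a local normal form for coherence and full faithfulness, then get essential surjectivity from Malgrange flatness, a global strictly perfect smooth resolution, and an order-by-order superconnection. Your kernel-by-kernel resolution is a fine substitute for the general fact that perfect complexes over a soft structure sheaf on a compact space are strictly perfect. There are, however, two genuine gaps.

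\textbf{The obstruction argument for the superconnection.} Your last step is wrong as stated, because $(\mathrm{End}(E),[\mathbb E^0,-])$ is not exact. Write $F^\infty=F\otimes_{\mathcal O}C^\infty_X$. Then $\mathrm{Hom}(E,E\otimes\mathcal A^{0,k})$ computes $\mathrm{Ext}^*_{C^\infty_X}(F^\infty,F^\infty\otimes\mathcal A^{0,k})$, which vanishes only in negative degrees. So your reasoning handles $\mathbb E^k$ for $k\ge 3$, whose obstruction sits in degree $2-k$. It does not handle $\mathbb E^1$ (obstruction in degree $1$) or $\mathbb E^2$ (obstruction the class of $(\mathbb E^1)^2$ in degree $0$).

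For a concrete failure, take $F=\mathcal O_X$ with $\dim X\ge 2$, $E=\underline{\mathbb C}$ and $\mathbb E^0=0$. The choice $\mathbb E^1=\bar\partial+a$ with $\bar\partial a\neq 0$ satisfies $[\mathbb E^0,\mathbb E^1]=0$, but admits no $\mathbb E^2$.

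These obstructions vanish only because $F\otimes\mathcal A^{0,\bullet}$ carries the flat operator $1\otimes\bar\partial$. So you must build each $\mathbb E^k$ together with the component $e^k$ of a closed map $e=e^0+e^1+\cdots\colon E\otimes\mathcal A\to F\otimes\mathcal A^{0,\bullet}$. The obstructions then lie in the cone of $e^0_*\colon\mathrm{Hom}(E,E\otimes\mathcal A^{0,k})\to\mathrm{Hom}(E,F\otimes\mathcal A^{0,k})$. That cone is acyclic because $E$ is projective and $e^0$ is a quasi-isomorphism. The form-degree filtration then shows $e$ is a quasi-isomorphism (compare Lemma \ref{HtpyTransfer}). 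Without $e$, your final sentence has no map to be a quasi-isomorphism.

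\textbf{The local normal form.} This is the more substantial gap, since both coherence and full faithfulness rest on it. Your induction does work once $\mathbb E^1$ is flat and commutes with $\mathbb E^0$. Then $(E,\mathbb E^0,\mathbb E^1)$ is a complex $\mathcal V$ of holomorphic bundles by Koszul--Malgrange. Each $\mathbb E^k$ with $k\ge 3$ is $[\mathbb E^0,-]$-closed, and its obstruction class lies in $H^k(U,\mathcal H^{1-k}(\mathcal{E}nd\,\mathcal V))$. That group vanishes on a polydisc by Cartan's Theorem B applied to these coherent, generally non-locally-free, sheaves; Dolbeault vanishing for trivial bundles is not enough.

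The first step, removing $\mathbb E^2$, is of a different nature. Initially $(\mathbb E^1)^2=-[\mathbb E^0,\mathbb E^2]\neq 0$, so $\mathbb E^2$ is not $[\mathbb E^0,-]$-closed. What you must control is the $\bar\partial$-cohomology of the $[\mathbb E^0,-]$-cohomology of smooth bundle maps whose rank jumps. On that non-locally-free $C^\infty_X$-module, $\mathbb E^1$ induces only a flat $\bar\partial$-operator, not a holomorphic structure in the Koszul--Malgrange sense. The $\bar\partial$-acyclicity you need amounts to knowing the module is (coherent)$\otimes_{\mathcal O}C^\infty_X$, which is the coherence you are trying to prove.

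As written, the argument is therefore circular at exactly the step that carries the content of the theorem. You need an independent argument that, near each point, $E$ is gauge (or homotopy) equivalent to a holomorphic complex.
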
	
	$\mathcal P_A$ in the above theorem denotes the \emph{dg-category of cohesive modules over $A$}. It is the full dg-subcategory of all dg-modules over $A$ whose underyling graded $A^\#$-modules are finitely generated and projective. In the case of $\mathcal A(X)$, objects in this category are equivalently described as complexes of smooth vector bundles with flat $\bar \partial$-superconnections. The theorem is thus a generalization of the Koszul-Malgrange theorem which states that a holomorphic vector bundle is equivalently specified by a smooth vector bundle together with a flat $\bar \partial$-connection on it.

    By \cite{cHH}, the continuous Hochschild cohomology of $\mathcal A(X)$ is equal to the Hochschild cohomology of $X$, 
    \[ \text{HH}_{cont}(\mathcal A(X),\mathcal A(X)) = \text{HH}(X) := \text{Ext}_{\mathcal O_{X \times X}}(\Delta_* \mathcal O_X, \Delta_* \mathcal O_X) \cong H^*(\Wedge^* TX).\]
    Hence, continuous deformations of $\mathcal A(X)$ are controlled by the proposition above. An element of $H^2(X, \mathcal O_X)$ is represented by a  $B$-field $B^{0,2}\in \A^{0,2}(X)$ with $\bar\partial B^{0,2}=0$. Using the exponential sequence of sheaves, 
    \[0\to\mathbb{Z}\to \mathcal O_X\to \mathcal O_X^\times\to 0\]
    we get an element $e^{2\pi i B^{0,2}}\in H^2(X,\mathcal O_X^\times)$, thus an $\mathcal O_X^\times$-gerbe. This gives rise to the derived category $D^b_{coh}(X,B)$ of twisted sheaves with coherent cohomology. On the other hand, we obtain the curved graded algebra $\mathcal A(X,B) = (\Omega^{0,*}(X),\wedge, \bar \partial, 2\pi i B^{0,2})$. We have
    \begin{theorem*}[\cite{Block2006},\cite{BlockDaenzer2010},\cite{antweiler}]
		There is an equivalence of triangulated categories
		\[ H^0(\mathcal P_{\mathcal A(X,B)}) \cong D^b_{coh}(X,B).\] 
        where $D^b_{coh}(X,B)$ denotes the derived category of $\exp(2\pi i B^{0,2})$-twisted sheaves with coherent cohomology.
	\end{theorem*}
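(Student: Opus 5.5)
The approach is to follow the strategy of \cite{Block2006} in the untwisted case and replace sheaves of $\mathcal O_X$-modules by $\exp(2\pi i B^{0,2})$-twisted sheaves throughout. First I would localize. Choose a good cover $\{U_i\}$ of $X$ by Stein polydiscs. On each $U_i$ the class $B^{0,2}$ is $\bar\partial$-exact, $B^{0,2}|_{U_i} = \bar\partial \beta_i$ with $\beta_i \in \A^{0,1}(U_i)$, by the Dolbeault lemma. Gauge transformation by $\beta_i$ then gives an isomorphism of curved dg-algebras $\mathcal A(U_i,B)\cong \mathcal A(U_i)$, sending a cohesive module $(E,\mathbb E)$ to $(E,\mathbb E - 2\pi i\beta_i)$. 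On overlaps, $\beta_i-\beta_j$ is $\bar\partial$-closed and hence equals $\bar\partial f_{ij}$ for holomorphic-up-to-$\bar\partial$ functions $f_{ij}$. The cocycle $g_{ijk} = \exp(2\pi i(f_{ij}+f_{jk}+f_{ki}))$ is a \v{C}ech representative of the gerbe $e^{2\pi i B^{0,2}}\in H^2(X,\mathcal O_X^\times)$.

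Next I would define the functor. Given a cohesive module $(E^\bullet,\mathbb E)$ over $\mathcal A(X,B)$, form the sheafified complex of $\mathcal A_X$-modules $U\mapsto E\otimes_{\A(X)}\A(U)$ with the superconnection $\mathbb E$. By construction this is not flat: $\mathbb E^2 = -2\pi i B^{0,2}$. On $U_i$ the gauged superconnection $\mathbb E - 2\pi i\beta_i$ is flat, so its cohomology is a complex of $\mathcal O_{U_i}$-modules. The gauges on $U_i$ and $U_j$ differ by multiplication by $e^{2\pi i f_{ij}}$, and these local pieces glue to a complex of $g$-twisted sheaves. One then checks, as in \cite{Block2006}, that the local sheaves have coherent cohomology. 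This uses the local $\bar\partial$-Poincaré lemma for flat superconnections: on a polydisc, a flat $\bar\partial$-superconnection on a complex of trivial bundles is quasi-isomorphic to a bounded complex of free $\mathcal O$-modules. The functor is therefore $\beta\colon H^0(\mathcal P_{\mathcal A(X,B)})\to D^b_{coh}(X,B)$.

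Full faithfulness I would prove by a local-to-global argument. $\mathrm{Hom}$ complexes in $\mathcal P_{\mathcal A(X,B)}$ are global sections of a fine sheaf of complexes. The twisting cancels in $\mathrm{Hom}(E,F)$, since both $E$ and $F$ carry curvature $B$, so this sheaf is an honest flat $\bar\partial$-superconnection complex. Locally its cohomology computes $R\mathcal{H}om$ of the associated twisted sheaves, and because the sheaf is fine the global sections compute $R\mathrm{Hom}$. A spectral-sequence comparison then gives quasi-isomorphism.

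Essential surjectivity is the main obstacle. Given a bounded complex of $g$-twisted sheaves with coherent cohomology, I would attempt the following. Locally on Stein $U_i$, resolve it by finite free complexes. Then glue these resolutions using the twisted transition data via a twisted version of the Toledo–Tong twisting-cochain construction. Finally, pass from the twisted cochain data to a global complex of smooth bundles with a $\bar\partial$-superconnection of curvature $2\pi i B^{0,2}$, by tensoring with smooth functions and using a partition of unity, exactly as in \cite{Block2006}; the gauge transformations $\beta_i$ absorb the gerbe. The delicate point is producing a global finitely generated projective $\A(X)^\#$-module. Smoothly the gerbe becomes trivial, because $B^{0,2}$ is exact in the fine sheaf of smooth forms, so each graded piece is an ordinary smooth vector bundle. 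The remaining work is to solve the curvature equation order by order in the form degree by a homological perturbation argument. This is where the details of \cite{BlockDaenzer2010} and \cite{antweiler} would be invoked.
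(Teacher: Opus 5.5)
The paper gives no proof of this statement; it is quoted from \cite{Block2006}, \cite{BlockDaenzer2010}, \cite{antweiler}, so the comparison has to be with the argument in those works. Your first three steps follow that standard argument:
\begin{itemize}
\item the localization: local primitives $\beta_i$, smooth $f_{ij}$, and the cocycle $g_{ijk}$;
\item the functor: gauge to the flat superconnection $\mathbb E-2\pi i\beta_i$ on each $U_i$ and glue by $e^{2\pi i f_{ij}}$;
\item full faithfulness: via the fine sheaf $\mathcal{H}om(\underline E,\underline F)$, on which the curvature cancels.
\end{itemize}
There are two small slips. With the paper's conventions a module satisfies $\mathbb E^2=2\pi i B^{0,2}$, not $-2\pi i B^{0,2}$; only the former makes $\mathbb E-2\pi i\beta_i$ flat. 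Also, it is the gauged complex itself, not its cohomology, that is a complex of $\mathcal O_{U_i}$-modules.

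The genuine gap is in essential surjectivity, exactly where you say the point is delicate. Smooth triviality of the gerbe removes the twist, but it does not produce global graded pieces. Your local free resolutions on the $U_i$ have different ranks and lengths. The transition data of a (twisted) Toledo--Tong twisting cochain are quasi-isomorphisms with higher homotopies, not isomorphisms. So there is no global graded bundle to ``tensor with smooth functions'', and no formal partition-of-unity step creates one. The sentence ``each graded piece is an ordinary smooth vector bundle'' presupposes what must be proved.

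The missing input is a globalization theorem. On a compact space, a locally perfect complex over the soft sheaf of rings $\mathcal A^0_X=C^\infty_X$ is globally quasi-isomorphic to a bounded complex of finitely generated projective $\mathcal A^0(X)$-modules (Illusie, SGA 6, Exp.~II). This is what Block uses, and with it the twisting cochains become unnecessary:
\begin{enumerate}
\item For a $g$-twisted complex $\mathcal F=(\mathcal F_i,\phi_{ij})$, give $\mathcal F_i\otimes_{\mathcal O}\mathcal A^{0,*}_{U_i}$ the differential $\bar\partial+2\pi i\beta_i$ and glue by $\phi_{ij}e^{-2\pi i f_{ij}}$.
\item These maps intertwine the local differentials, and their cocycle is $g_{ijk}e^{-2\pi i(f_{ij}+f_{jk}+f_{ki})}=1$. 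So you obtain an honest sheaf of Dolbeault modules of curvature $2\pi i B^{0,2}$.
\item Its image under your functor is $\mathcal F\otimes_{\mathcal O}\mathcal A^{0,*}_X$. This resolves $\mathcal F$ because $C^\infty_X$ is flat over $\mathcal O_X$ (Malgrange).
\item Its reduction modulo $\mathcal A^{\ge 1}_X$ is an untwisted, locally perfect complex of $C^\infty_X$-modules. Illusie's theorem replaces it by a global complex of vector bundles $(E^\bullet,\mathbb E_0)$.
\item The order-by-order homotopy transfer (cf.\ Lemma \ref{HtpyTransfer}) then extends $\mathbb E_0$ to a superconnection of curvature $2\pi i B^{0,2}$.
\end{enumerate}
If you want to keep twisting cochains, you must prove that twisted complexes over $C^\infty_X$ on compact $X$ are equivalent to global bounded complexes of vector bundles. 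That is again a globalization theorem, not a partition-of-unity computation. Deferring this step to \cite{BlockDaenzer2010} and \cite{antweiler} is circular, since those are the sources of the statement itself.
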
	
    
     $H^1(TX)$ is the space of (first order) Kodaira-Spencer deformations. Small integrable Kodaira-Spencer deformations in $H^1(TX)$ can be interpreted as twisting the $\bar \partial$-operator. The remaining space $H^0(\Wedge^2 TX)$ is more difficult to understand in general. While the formal deformation theories of $\mathcal A(X)$ and $X$ as a generalized complex manifold (as defined in \cite{gualtieri}) match \cite{cHH}, it is not known whether any holomorphic poisson bivector does give rise to a (possibly $A_\infty$) non-commutative deformation of $\mathcal A(X)$. However, this \emph{is} possible for a torus $V / \Gamma$ and a (constant) bivector $\beta \in \Wedge^2 V$. Essentially, the deformed product is determined by
    \[ u_{\gamma} \star u_{\gamma'} := e^{\pi i \beta(\gamma,\gamma')} u_{\gamma + \gamma'}\]
    for $u_\gamma(x) = e^{2\pi i \gamma(x)}$ and $\gamma \in \Gamma^\vee$. $\Gamma^\vee$ denotes the dual lattice $\Gamma^\vee = \{ \gamma \in V^\vee |\, \gamma(\Gamma) \subseteq \mathbb Z \}$.  The $\bar\partial$-operator remains undeformed. 
    In that way, obtain a non-commutative Dolbeault algebra
    \[ \mathcal A(V/\Gamma,\beta,0) := (\Omega^{0,*}(V/\Gamma),\star,\bar \partial, 0). \]\\

   \noindent
   \textbf{Contents of the paper.}
   \newline  
   \emph{Properness and Grauert direct image theorem.}
    One key observation in this paper is that the curved dg-algebras $\mathcal A(X,0,B)$ (for general compact $X$) and $\mathcal A(V/\Gamma,\beta,B)$ (for tori and conjecturally for more general non-commutative manifolds) come equipped with very favorable functional analytic properties, making them into \emph{Fréchet-Fredholm cdg-algebras} ('c' standing for curved), Definition \ref{DefFreFred}. We note that an algebra concentrated in degree $0$ is Fréchet-Fredholm if and only if it is finite-dimensional. In this sense our theory is a generalization of the theory of finite-dimensional algebras. Our first main result is that for a Fréchet-Fredholm cdg-algebra $A$ we have that the dg-category $\mathcal P_A$ is proper:
    \begin{namedtheorem}[Proposition \ref{PropProper}]
		Let $A$ be a Fréchet-Fredholm cdg-algebra. Then the morphism spaces in the homotopy category of $\mathcal P_A$ are finite-dimensional. In fact, the morphism complexes in $\mathcal P_A$ are Fréchet-Fredholm complexes and are thus continuously homotopy equivalent to their finite-dimensional cohomology.
    \end{namedtheorem}
    Let us mention that we do \emph{not} in general expect $\mathcal P_A$ to have a compact generator.
    To a Fréchet-Fredholm cdg-algebra $A$ we can associate Banach algebras $\prescript{}{i}{A}$ by completing $A$ at certain seminorms indexed by $i \in \mathbb N$. Using the Banach implicit function theorem and ideas from \cite{Uhlenbeck}, we obtain the following regularity result:
    \begin{namedtheorem}[Theorem \ref{Regularity}]
		For $i \in \mathbb N$ large enough, the functors $\mathcal P_{A} \to \mathcal P_{\prescript{}{i}{A}}$ are dg quasi-equivalences. In fact, for any two objects $M$ and $N$ of $\mathcal P_{A}$, the map
		\[ \text{Hom}^*_{A}(M,N) \to \text{Hom}^*_{\prescript{}{i}{A}}( \prescript{}{i}{A} \otimes_{A} M  ,\prescript{}{i}{A} \otimes_A N  )\]
		is a continuous homotopy equivalence.
    \end{namedtheorem}
    The regularity result allows us to apply Fredholm theory for Banach modules over Banach algebras in proving the following Grauert-type theorem:
    \begin{namedtheorem}[Theorem \ref{GrauertPrinciple}]
        Let $A,B$ and $C$ be Fréchet-Fredholm cdg-algebras.
        Let $M \in \mathcal P_{A \hat \otimes B^{op}}$, $N \in \mathcal P_{B \hat \otimes C^{op}}$. Then their completed projective tensor product
        \[ M \hat \otimes_B N\]
        is continuously $A \hat \otimes C^{op}$-linearly homotopy equivalent to a cohesive module $T \in \mathcal P_{A \hat \otimes C^{op}}$. In particular $(-) \hat \otimes_B (-)$ induces an $A_\infty$-functor
        \[ \mathcal P_{A \hat \otimes B^{op}} \times \mathcal P_{B \hat \otimes C^{op}} \to  \mathcal P_{A \hat \otimes C^{op}}.\]
    \end{namedtheorem}
    We then consider Fréchet-Fredholm cdg-algebras $A$ which are smooth in a certain sense. In particular for such algebras, there is a cohesive module $\Delta \in \mathcal P_{A \hat \otimes A^{op}}$ representing the identity functor $\mathcal P_{A} \to \mathcal P_{A}$. These include the examples discussed above. The classical Grauert result for compact manifolds is then deduced from the following Proposition (applied to $\mathcal A(g) : \mathcal A(Y) \to \mathcal A(X)$ for $g : X \to Y$)
    \begin{namedtheorem}[Proposition \ref{PropositionGrauertMaps}]
		Let $A$ and $B$ be Fréchet-Fredholm cdg-algebras and let $A$ be Fréchet twist-smooth. Let
		\[ f : A \to B\]
		be a continuous map of cdg-algebras. Then $f$ induces an $A_\infty$-functor 
		\[ \mathcal P(f)^* : \mathcal P_{B} \to \mathcal P_{A}\]
		$A_\infty$-right-adjoint to the extension of scalars functor $\mathcal P(f)_* = B \hat \otimes_{A} (-) : \mathcal P_A \to \mathcal P_B$.
    \end{namedtheorem}

    \emph{Mukay duality.}
    In the last section we apply our framework to the case of non-commutative/curved tori. To state the main result, we need to fix some data fulfilling non-linear relations:
    \begin{itemize}
        \item A torus $V/\Gamma$.
        \item Complex structures $I_1 : V \to V$, $I_2 : V^\vee \to V^\vee$.
        \item Real bivectors $\beta_1 \in \Wedge^2 V$ and $\beta_2 \in \Wedge^2 V^\vee$ (viewed as a bivector on $V^\vee$).
	\item $2$-forms: $B_1 \in \Wedge^2 V^\vee$ and $B_2 \in \Wedge V^2$
	\item The map $\xi := 1-\beta_2^\sharp \beta_1^\sharp  : V^\vee \to V^\vee$ is invertible. 
   
	\item We have \begin{nalign}\label{relationsBetaB}
		&I_2 \xi= - \xi I_1^* \\
		&(B_1^\flat)^{(0,2)_{I_1}} = (\beta_2^\sharp (\xi^*)^{-1})^{(0,2)_{I_1}} \\
		&(B_2^\flat)^{(0,2)_{I_2}} = (\beta_1^\sharp \xi^{-1})^{(0,2)_{I_2}}
	\end{nalign}
    \end{itemize}
    For bivectors $\beta$ we use the notation $\beta^\sharp : V^\vee \to V, \alpha \mapsto \iota_{\alpha}{\beta}$. Similarly, if $B$ is a 2-form we write $B^\flat : V \to V^\vee, v \mapsto \iota_{v} B$. Algebraically, to this data we associate two non-commutative curved algebras:
    \[ \mathcal A(I_1,\beta_1,B_1) \text{ and } \mathcal A(I_2,\beta_2,B_2).\]
    In \cite{DCatsTori}, the authors give a geometric interpretation of the set of equations \ref{relationsBetaB} in terms of T-duality and \emph{generalized complex geometry} \cite{gualtieri} which we now describe. Given a (translation invariant) generalized complex structure $\mathcal J$ on $V/\Gamma$ one can define its T-dual (living on $V^\vee/\Gamma^\vee$) with respect to the canonical pairing between $V$ and $V^\vee$. Concretely, it is given by the formula
    \[ \varphi(\mathcal J) = \begin{pmatrix}
        0 & 1 \\ 1 & 0
    \end{pmatrix} \mathcal J \begin{pmatrix}
        0 & 1 \\ 1 & 0
    \end{pmatrix} : V^\vee \oplus V \to V^\vee \oplus V.\]
    Denote by $\mathcal J_{I_1}$ the generalized complex structure
    \[ \mathcal J_{I_1} = \begin{pmatrix}
        I_1 & 0 \\ 0 & -I_1^*
    \end{pmatrix}\]
    and $\mathcal J_{I_1,\beta_1,B_1} := e^{\beta_1} e^{B_1} \mathcal J_{I_1} e^{-B_1} e^{-\beta_1}$.
    Then the relations \ref{relationsBetaB} are equivalent to 
    \[ \varphi(\mathcal J_{I_1,\beta_1,B_1}) = \mathcal J_{I_2,\beta_2,B_2}.\]
    As pointed out in \cite[Theorem 6.6]{cavalcanti2007generalized}, T-duality can be reinterpreted in terms of the existence of a space filling generalized complex brane on $\mathcal J_{I_1,\beta_1,B_1} \times \mathcal J_{I_2,-\beta_2,-B_2}$. A version of our main result reads:
    \begin{namedtheorem}[Theorem \ref{ThmNCFM}]
    	Let $P_1$ and $P_2$ be the non-commutative Poincaré line bundles over $\mathcal A(I_1 \times I_2,\beta_1 - \beta_2,B_1-B_2)$ and $\mathcal A(I_1 \times I_2,\beta_1 - \beta_2,B_1-B_2)^{op}$.
    	Then $P_1$ and $P_2[-n]$ establish mutually inverse $A_\infty$-quasi-equivalences of dg-categories
    	\begin{align*}
    		&P_1 \hat \otimes (-) : \mathcal P_{\mathcal A(I_1,\beta_1,B_1)} \to \mathcal P_{\mathcal A(I_2,\beta_2,B_2)} \\
    		&P_2[-n] \hat \otimes (-) : \mathcal P_{\mathcal A(I_2,\beta_2,B_2)} \to \mathcal P_{\mathcal A(I_1,\beta_1,B_1)}.
    	\end{align*} 
    \end{namedtheorem}
    In fact, our result is stronger than just a statement about $A_\infty$-quasi-equivalences since the functors are induced by cohesive kernels. We suggest that smooth Fréchet-Fredholm cdg-algebras should be studied up to such equivalence. Note that if $\beta_i = B_i = 0$, then we obtain the usual Fourier-Mukai duality $D^b_{coh}(V/\Gamma,I) \cong D^b_{coh}(V^\vee/\Gamma^\vee,-I^*)$. For $B_1 = 0$ we get \[\mathcal P_{\mathcal A(I,\beta,0)} \cong \mathcal P_{\mathcal A(-I^*,0,B_{\beta})} \]
    where $B_{\beta}^\flat := \beta_1^\sharp$. This equivalence was announced by the second author in \cite{Block2006}. As pointed out earlier, the latter category can be computed in terms of twisted sheaves. We also obtain a strengthening of a result by \cite{polishchuk2003categories} who compare derived categories of non-commutative elliptic curves with their commutative counterparts.

    The comparison of the algebraic result with the geometric analogues suggests that $\mathcal P_{\mathcal A(I_1,\beta_1,B_1)}$ should be thought of as the derived category of the generalized complex structure $\mathcal J_{I_1,\beta_1,B_1}$. In that picture, the Poincaré line bundle $P_1$ realizes the space filling brane on $\mathcal J_{I_1,\beta_1,B_1} \times \mathcal J_{I_2,-\beta_2,-B_2}$.  In fact, as will be argued in \cite{DCatsTori}, Theorem \ref{ThmNCFM} implies that $\mathcal P_{\mathcal A(I_1,\beta_1,B_1)}$ only depends on the generalized complex structure $\mathcal J_{I_1,\beta_1,B_1}$
    rather than on the choices of $I_1,\beta_1,B_1$. Moreover, the definition of the derived category is consistent with the homological mirror symmetry conjecture. 
    \newline
    \newline
    \textbf{Relation with other work.}
    Non-formal non-commutative Fourier-Mukai duality (in a special case) was originally studied by the second author in \cite{Block}. In this paper, we generalize to arbitrary generalized complex tori and fill in gaps in the proof of \cite{Block}. It seems that the claimed result in loc. cit. only holds upon a further localization of the category of quasi-cohesive modules. Furthermore, in loc. cit., an equivalence between larger categories of quasi-cohesive modules was proposed, whereas the present paper proves that this equivalence restricts to the proper dg-category of cohesive modules using the Grauert-calculus proved in the first part of the paper.\\ 
    
    In \cite{DCatsTori}, the first author and coauthors propose the definition of the derived category of generalized complex tori. The present paper is the key ingredient in proving well-definedness of these categories. Moreover, as will be argued in loc. cit., our main result implies that the proposed category is invariant under the action of $O(2n,2n; \mathbb Z)$ which includes T-duality.\\

\noindent\textbf{Acknowledgments.}
We thank Julian Holstein for insightful conversations and useful comments.
The first author acknowledges funding by the Deutsche Forschungsgemeinschaft(DFG,
German Research Foundation) – SFB 1624 “Higher structures, moduli spaces and integrability” – Projektnummer 506632645.\\

    \noindent\textbf{Notations and conventions.} A \emph{complete locally convex pre-complex $M$} consists of complete locally convex vector spaces $M^i$ and continuous maps $d : M^i \to M^{i+1}$. A \emph{graded complete locally convex space $M$} is a complete locally convex pre-complex for which $d = 0$. If $d^2 = 0$, we say that $M = (M^*, d)$ is a \emph{complete locally convex complex}. For such pre-complexes $M$ and $N$ we define the graded space
    \[ \text{Hom}^i(M,N) := \prod_{n \in \mathbb Z} \text{Hom}_{cont}(M^n, M^{n+i}).\]
    If $M$ and $N$ are complexes, we endow $\text{Hom}^*(M,N)$ with the differential
    \[ d(f) = d_N \circ f - (-1)^{|f|} f \circ d_M\]
    making $(\text{Hom}^*(M,N),d)$ into an ordinary discrete cochain complex.
    The dg-category of complete locally convex complexes (and the graded category of graded complete locally convex spaces) admits the following tensor product
    \[ (M \hat \otimes N)^i := \prod_{a+b = i} M^a \hat \otimes_{\pi} N^b \]
    endowed with the usual Koszul differential. Here $ \hat \otimes_{\pi}$ denotes the completed projective tensor product. It is a crucial fact that $(-) \hat \otimes_{\pi} (-)$ commutes with arbitrary products. 
    
    A \emph{complete locally convex cdg-algebra} $A$ ('c' standing for curved) is a cdg-algebra object in the graded category of graded complete locally convex spaces. Concretely, this means that $A$ consists of $(A^*, \cdot, d, B, 1_A)$
    where $(A^*,d)$ is a complete locally convex pre-complex, $\mu = \cdot : A \hat \otimes A \to A$ and $B \in A^2$ satisfy the quadratic relations:
    \begin{itemize}
        \item (Associativity) $\mu \circ (\text{id}_A \hat \otimes \mu) = \mu \circ (\mu \hat \otimes \text{id}_A)$
        \item (Unitality) $\mu \circ (1_A \otimes \text{id}_A) = \mu \circ (\text{id}_A \otimes 1_A) = \text{id}_A$
        \item (Leibniz rule) $\mu \circ (d \otimes \text{id}_A) - \mu \circ (\text{id}_A \otimes d) = d \circ \mu $
        \item (Curvature controlled by $[B,-]$) $d^2 = \mu \circ (B \otimes \text{id}_A) - \mu \circ (\text{id}_A \otimes B) $
        \item (Bianchi identity) $d(B) = 0$
    \end{itemize}
    A \emph{cdg-module} $M$ over $A$ is a cdg-module object for $A$ in the graded category of graded complete locally convex spaces. This means that $M$ consists of $(M^*, d_M, .)$ where $(M^*, d_M)$ is a complete locally convex pre-complex and $\mu_M = . : A \hat \otimes M \to M$ satisfies
    \begin{itemize}
        \item (Curvature) $d_M^2 = \mu_M \circ (B \otimes \text{id}_M)$
        \item (Associativity) $\mu_M \circ (\mu \otimes \text{id}_M) = \mu_M \circ (\text{id}_A \otimes \mu_M)$
        \item (Unitality) $\mu_M \circ (1_A \otimes \text{id}_M) = \text{id}_M$
    \end{itemize}
    Given cdg-modules $M$ and $N$ over $A$ we define
    \[ \text{Hom}_A^i(M,N) := \{ f \in \text{Hom}^i(M^*,N^*) | f \circ \mu_M = \mu_N \circ (\text{id}_A \otimes f) \}\]
    where the differential is given by $d(f) = d \circ f - (-1)^{|f|} f \circ d$ which squares to $0$ (even if $B \neq 0$). In this way, we obtain the \emph{dg-category $A\text{-Mod}$ of (complete locally convex) $A$-modules}.

	\section{Fredholm theory over Banach algebras}
    \subsection{Banach modules}
    In this section we will fix a unital Banach algebra $A$ (concentrated in degree $0$). A \emph{Banach module over $A$} is a Banach space $M$ together with a continuous action
    \[ A \hat \otimes_{\pi} M \to M\]
    satisfying the usual axioms. We define
    \[ \text{Hom}_A(M,N) = \{ f: M \to N \text{ continuous } | f(a.m) = a.f(m) \text{ for all } a,n \}.\]
    It is a closed subspace of all continuous maps $M \to N$ and hence itself a Banach space. If $M$ is an $A^{op}$-module and $N$ is an $A$-module, we let
    \[ M \hat \otimes_A N := \text{coeq}( M \hat \otimes A \hat \otimes N \rightrightarrows M \hat \otimes N)\]
    computed in the category of Banach spaces.
    \subsection{$A$-nuclear operators}
    
    \begin{def1}
        The space of \emph{$A$-nuclear operators} is the subspace
        \[ \text{Hom}_{A-nuc}(M,N) := \phi(\text{Hom}_A(M,A) \hat \otimes_{A} N) \subset \text{Hom}_{A}(M,N)\]
        where $\phi(f \otimes n)(m) = f(m).n$.
        The space of \emph{algebraically $A$-nuclear operators} is the subspace 
        \[ \text{Hom}_{A-alg.nuc}(M,N) := \phi(\text{Hom}_A(M,A)\otimes_{A} N) \subset \text{Hom}_{A}(M,N)\]
        where $\otimes_{A}$ is the uncompleted tensor-product.
    \end{def1}
    \begin{def1}
        Let $M$ be a Banach module over $A$. An $A$-linear map $f : M \to M$ is called \emph{$A$-Fredholm of index $0$} if it has the form $f = T - N$ where $T$ is invertible and $N$ is $A$-nuclear.
    \end{def1}
    \begin{lemma}
        \label{lemmaAFredholm}
        A map $f : M \to M$ is $A$-Fredholm of index $0$ if and only if $f = T - F$ where $T$ is invertible and $F$ is \emph{algebraically} $A$-nuclear.
        \begin{proof}
            Suppose that $f = T-N$ with $T$ invertible and $N$ $A$-nuclear. The space of algebraically nuclear maps is dense in the space of nuclear map. Thus, we can write $N = \lim_{n \to \infty} N_n$ such that the $N_n$ are algebraically nuclear. Writing
            \[ f = (T-N+N_n) - N_n\]
            we see that $(T-N+N_n)$ is invertible for $n$ large enough as $\text{Hom}_A(M,M)$ is a Banach algebra.
        \end{proof}
    \end{lemma}
    \begin{def1}
        Let $B,B'$ be Banach spaces. Consider the subspace of continuous finite rank maps $\text{Hom}_{fin}(B,B') \subseteq \text{Hom}(B,B') $.
        Its closure inside $\text{Hom}(B,B')$ is denoted by $\text{Hom}_{App}(B,B')$. Its elements are called \emph{approximable}. By definition, a map $f : B \to B'$ is approximable if it is a limit of a sequence of finite rank maps.
    \end{def1}
    \begin{lemma}
        \label{FreeCriterion}
        Let $M = A \hat \otimes V$ be a free $A$-module. Let $f \in A \hat \otimes \text{Hom}(V,V)$ be of the form $f = T - K$ where $T \in A \hat \otimes \text{Hom}(V,V)$ is invertible and 
        $N \in A \hat \otimes \text{Hom}_{App}(V,V)$. Here, $\text{Hom}_{App}(V,V)$ denotes the closed subspace of $\text{Hom}(V,V)$ consisting of approximable maps. Then the image of $f$ in $\text{Hom}_A(M,M)$,
        \[ [f] \in \text{Hom}_A(M,M) \cong \text{Hom}(V,A \hat \otimes V)\]
        is $A$-Fredholm of index $0$.
        \begin{proof}
            We note that $A \hat \otimes \text{Hom}(V,V)$ is a Banach algebra. Similarly to the proof of Lemma \ref{lemmaAFredholm}, we can write
            \[ f = \tilde T - F\]
            where $\tilde T \in A \hat \otimes \text{Hom}(V,V)$ is invertible, $F \in A \otimes \text{Hom}_{fin}(V,V)$ and $\text{Hom}_{fin}(V,V)$ is the space of finite rank maps. Then 
            \[ [f] = [\tilde T] - [F]\]
            where $[\tilde T]$ is invertible and $[F]$ is algebraically $A$-nuclear.
        \end{proof}
    \end{lemma}
	\begin{lemma}
        \label{CriterionAlgNuclear}
	    An $A$-linear map $f : M \to N$ is algebraically $A$-nuclear if and only if it factors through free finite-rank module $M \to V \otimes A \to N$ (with $V$ finite-dimensional).
        \begin{proof}
            If $f$ is algebraically $A$-nuclear, we can find $f_1,...,f_n \in \text{Hom}_A(M,A)$ and $p_1,...,p_n \in N$ such that
            \[ f(m) = f_1(m).p_1 + ... + f_n(m).p_n \]
            for all $m \in M$. Thus, set $V = \mathbb C^n$, define $M \to V \otimes A$ as $(f_1,...,f_n)$ and $V \to N$ as $e_i \mapsto p_i$. The converse is proven similarly.
        \end{proof}
	\end{lemma}
    The following proposition gives useful criteria when a complex of $A$-modules is perfect, i.e., continuously homotopy equivalent to a finite complex of finitely generated projective $A$-modules. Its discrete analogue was proved in \cite[Proposition 1.1]{Quillen1996} and our formulation is a special case of \cite[Proposition 4.11]{BlockDaenzer2010}.
    \begin{proposition}
        \label{PropCharacProj}
        Consider a finite complex $M^* = 0 \to M^0 \to ... \to M^n \to 0$ of Banach $A$-modules. Then the following are equivalent:
        \begin{enumerate}
            \item $M^*$ is continuously homotopy equivalent to a finite complex of finitely generated projective $A$-modules.
            \item $M^*$ is a continuous homotopy direct summand of a finite complex of finitely generated projective $A$-modules.
            \item The identity $1_M: M \to M$ is homotopic to a map which is algebraically $A$-nuclear in every degree.
            \item \label{Cond4} There is a continuous map $H : M^* \to M^{*-1}$ such that $dH + Hd : M^* \to M^*$ is (degreewise) $A$-Fredholm of index $0$.
        \end{enumerate}
        \begin{proof}
                \begin{itemize}
                    \item $(2) \Leftrightarrow (1)$: The discrete analogue of this statement is well-known, see \cite{HomLimits}. Thus it suffices to observe that maps between finitely-generated projective $A$-modules are automatically continuous. 
                \item $(4) \Rightarrow (3):$ In each degree, we may write $dH + Hd = T - F$ where $F$ is algebraically $A$-nuclear by Lemma \ref{lemmaAFredholm} and $T$ is invertible. Then we get 
                \[ d T^{-1} H + T^{-1}H d - d(T^{-1}) H = T^{-1}(dH + Hd) = 1- T^{-1} F\]
                so 
                \begin{align*} dT^{-1}H + T^{-1}Hd &= 1 - (T^{-1}F - d(T^{-1}) H)\\
                 &= 1 - (T^{-1}F - T^{-1} d(T) T^{-1} H).
                \end{align*}
                Observe that $d(T) = d(-F)$ is algebraically $A$-nuclear, hence so is $T^{-1}F - T^{-1} d(T) T^{-1} H$.
                \item $(3) \Rightarrow (2)$: Write $dH + Hd = 1 - F$ where $F$ is algebraically nuclear. By Lemma \ref{CriterionAlgNuclear}, we may factor each $F^i : M^i \to M^i$ (degreewise) through a finitely generated free $A$-module $A \otimes V^i$. From this data, one may construct explicitly a differential on $... \to V^i \otimes A \to V^{i+1} \otimes A \to ...$ and maps $f : M \to (V^* \otimes A, d)$, $g : (V^* \otimes A, d) \to M^*$ such that $g \circ f$ is a homotopic to $1_M$, see the proof of \cite[Proposition 1.1]{Quillen1996}.
                \end{itemize}
            \end{proof}
    \end{proposition}
    \begin{remark}
        \label{RmkEuler}
        Consider the case $A = \mathbb C$. Let $F^*$ be a perfect complex over $\mathbb C$. Thus, we can define its Euler characteristic
        \[ \chi(F) := \sum_{i} (-1)^i \text{dim}(H^i(F)) \in \mathbb Z.\]
        It is stable under perturbing the differential $d$ by a compact operator and it is locally constant in $d \in \text{Hom}^1(F^*, F^*)$.
    \end{remark}
    \begin{corollary}
        \label{CorCharacProj}
        Let $M^* = (A \hat \otimes V^*,d)$ be a bounded complex of free $A$-modules. Suppose that $d$ extends to
        \[ d^l \in A \hat \otimes \text{Hom}^1(V^*, V^*)\]
        and that there is $H \in A \hat \otimes \text{Hom}^{-1}(V^*, V^*)$ such that
        \[ d^l H + H d^l = T - K\]
        where $T \in A \hat \otimes \text{Hom}^0(V^*, V^*)$ is invertible and $K \in A \hat \otimes \text{Hom}_{App}^0(V^*, V^*)$. Then $M^*$ is continuously homotopy equivalent to a finite complex of finitely generated projective $A$-modules.
        \begin{proof}
            This follows from Proposition \ref{PropCharacProj} together with Lemma \ref{FreeCriterion}.
        \end{proof}
    \end{corollary}

	\section{Fréchet-Fredholm cdg-algebras}
	\subsection{Fredholm theory for Fréchet spaces}
	We will be working with the notion of a graded-Fréchet space as defined in \cite{Hamilton} and the notion of tame maps between graded-Fréchet spaces.
	\begin{def1}
		A \emph{graded-Fréchet space} is a Fréchet space $F$ with a choice of increasing seminorms indexed by $n \in \mathbb N_0$. Given two graded Fréchet spaces $F$ and $G$ and a continuous map $f : F \to G$ we say that $f$ is of \emph{tame degree $k \in \mathbb Z$} if there exists a "base" $b \in \mathbb N_0$ and constants $C_b, C_{b+1},...$ such that
		\[ ||f(x)||_{i+k} \leq C_i ||x||_i\]
		for all $i \geq b$. Note that if $f$ is of tame degree $k \in \mathbb Z$ then it also of tame degree $\tilde k \in \mathbb Z$ for all $\tilde k \leq k$. The composition of a map of tame degree $n$ with a map of tame degree $m$ is a map of tame degree $n+m$.
		If $F$ and $G$ are graded-Fréchet space then then their projective completed tensor product $F \hat \otimes G$ is also a graded-Fréchet space where the seminorms lie on the diagonal.
	\end{def1}
	\begin{def1}
		\begin{itemize}
			\item 	Let $F$ be a graded-Fréchet space. We say that \emph{$F$ admits smoothing operators} if 
			there is a family $(t^{\epsilon})_{\epsilon > 0}$ of tame degree $1$ operators with common base $b$ such that for all $i \geq b$ we have that the operators
			\[ \Delta^{\epsilon}_i := p_{i+1 \to i-1} \circ t^{\epsilon} - p_{i \to i-1}: F_i \to F_{i-1}\]
			of Banach spaces fulfill
			\begin{nalign} \label{ApproximationProperty} \Delta^{\epsilon}_i \to 0 \text{ as } \epsilon \to 0.
			\end{nalign}
			\item A \emph{large Fréchet pre-complex} consists of a finite collection $A^i$ $(i \in \mathbb Z)$ of graded-Fréchet spaces and maps $d^i : A^i \to A^{i+1}$ which are of tame degree $-1$ which admit a "Parametrix" $S$ which is a family of maps $S^i : A^i \to A^{i-1}$ which are of tame degree $1$ such that $T - K = dS + Sd$ where
			\begin{enumerate}
				\item $T$ is of tame degree $0$ with tame degree $0$ inverse and,
				\item $K$ is of tame degree $1$.
			\end{enumerate}
			Furthermore, it is required that each $A^i$ admits smooth operators (which may not be compatible with $d$). If $d^2 = 0$. we say that $A$ is a \emph{large Fréchet complex}.
			\item A \emph{Fréchet-Fredholm pre-complex} is a large Fréchet pre-complex such that all localisations $p_{k+1 \to k} : A^i_{k+1} \to A^i_k$ are approximable for all $k$.  If $d^2 = 0$, we say that $A$ is a \emph{Fréchet-Fredholm complex}.
		\end{itemize}
	\end{def1}
    \begin{remark}
	In the definition of Fréchet-Fredholm pre-complexes, we remark that it is actually not necessary to assume the existence of smoothing operators as they can be constructed only using the approximability property.
    \end{remark}

	\begin{example}
		If $F^*$ is a large Fréchet pre-complex and $B$ is any Banach space, then $B \hat \otimes F^*$ is a large Fréchet cdg-complex since the smoothing operators of $F^*$ induce smoothing operators for $B \hat \otimes F^*$. However, if $F^*$ is a Fréchet-Fredholm pre-complex, then unless $B$ is finite-dimensional, $B \hat \otimes F^*$ fails to be a (small) Fréchet pre-complex since a map of the form $1_{B} \hat \otimes T$ where $T$ has finite rank, has infinite-rank unless $T = 0$.
		
		If $(F^*,d)$ is a large Fréchet cdg-complex and $T : F^* \to F^{*+1}$ has tame degree $0$, then $(F^*, d+T)$ is also a large Fréchet cdg-complex (the analogous statement holds for Fréchet-Fredholm pre-complexes).
	\end{example}
	
	\begin{def1}
		\label{defBanachLocalisation}
		Let $A^*$ be a large Fréchet-Fredholm pre-complex. 
		We denote by $A^i_k$ the $k$-th seminorm localisation of the $i$-th degree of $A$. 
		Note that as $d$ is only of tame degree $-1$, we do \emph{not} have maps $A^i_k \to A^{i+1}_k$ so we cannot view $A^*_k$ as a complex. 
		We define the following complexes of Banach spaces for $k$ large enough:
		\[ (\prescript{}{k}{A})^i = A_{k-i}^i\]
		Now $d$ is well-defined on the $\prescript{}{k}{A}$ making each $\prescript{}{k}{A}$ into a complex of Banach spaces.
	\end{def1}
	\begin{lemma}
		Let $A$ be a large Fréchet complex. Then $S$, $T$ and $K$ can be chosen such that $T = \text{id}$.
		\begin{proof}
			It suffices to set $S' := T^{-1} S$ and check that $K' := 1 -dS' - S'd $ is of tame degree $1$.
		\end{proof}
	\end{lemma}
	\begin{lemma}[Bootstrap Lemma]
		\label{LemmaFreBootstrap}
		Let $A^*$ be a large Fréchet complex.
		For $i$ large enough, all maps $\prescript{}{i+1}{A} \to \prescript{}{i}{A}$ are continuous homotopy equivalences, and the maps $A \to \prescript{}{i}{A}$ are continuous homotopy equivalences.
		
		If moreover $A^*$ is a Fréchet-Fredholm complex, then $A$ is also continuously homotopy equivalent to its finite-dimensional cohomology.
		\begin{proof}
			We employ the previous Lemma and so we can assume $T = 1$ (in particular, $K$ is closed).
			The maps $\prescript{}{i+1}{A} \to \prescript{}{i}{A}$ have as homotopy inverse the map $K$ (which was assumed to be of degree $1$) and the homotopy is simply given by $S$. In the case that $A^*$ is small, we can use $S$ as a parametrix and apply Corollary \ref{CorCharacProj} (with respect to the algebra $\mathbb C$). Thus, $\prescript{}{i}{A}$ is continuously homotopy equivalent to its finite-dimensional cohomology groups.
			
			It remains to show that the maps $A \to \prescript{}{i}{A}$ are continuous homotopy equivalences. Given a sequence $(\epsilon_i > 0)_{i \in \mathbb N}$ we define the following maps iteratively. First, set
			\[ F_0 := \text{id}_{A}, H_0 = 0\]
			and then 
			\[ F_{i+1} := (K + d(S t^{\epsilon_{i+1}} )) \circ F_i,  H_{i+1} := H_i + S(1-t^{\epsilon_{i+1}})F_i.\]
			Then each $F_{i}$ is closed and has tame degree $i$ and $H_i$ is of tame degree $1$ fulfilling 
			\[ 1-F_{i} = d(H_i).\]
			By the behaviour of $t^{\epsilon}$ for $\epsilon \to 0$ (see \ref{ApproximationProperty}), we see that the limit of $F_i$ and $H_i$ exist as $i \to \infty$ for some choice of $\epsilon_i$ using that
			\[ K + d(St^{\epsilon_{i+1}}) = \text{id}_A + d(S(t^{\epsilon_{i+1}} - 1)).\]
			The resulting $F$ is of infinite tame degree while $H$ is of tame degree $1$.
			In particular, these operators determine a continuous homotopy equivalence $\prescript{}{b}{A}  \to A$ (where $b$ is the common base of all occuring tame maps).
			
		\end{proof}
	\end{lemma}
	
	\begin{def1}
		Let $F^*$ be a large Fréchet pre-complex. We call a parametrix $S$ for this complex a \emph{strong parametrix} if $S^2 = 0$ and $T = 1$. In that case, we may view $F$ with reversed grading as a large Fréchet complex with differential $S$ and parametrix $d$. So in the case that $F$ is Fréchet-Fredholm one in particular obtains that $\text{Ran}(S)$ is closed.
	\end{def1}
	\begin{corollary}
		\label{CorStrongParametrix}
		Let $F^*$ be a  Fréchet-Fredholm complex. Then $F^*$ admits a strong parametrix.
		\begin{proof}
			By the bootstrap Lemma, $F^*$ is continuously homotopy equivalent to its finite-dimensional cohomology complex $H^*$. Since the latter has vanishing differential, $H^*$ is a strict direct summand of $F^*$, thus $F^* = H^* \oplus C^*$. The component $S_C : C^{*} \to C^{*-1}$ of the homotopy on $F^*$ coming from the homtopy equivalence $F^* \simeq H^*$ is a null-homotopy $1 = dS_C + S_C d$ of tame degree $+1$. To show that $F^*$ admits a strong parametrix, it thus suffices to show that $C^*$ admits one.
			
			We define the following parametrix on $C$:
			\[ H := S_C - d S_C S_C.\]
			We note that $d S_C  S_C d = dS_C (1-dS_C) = dS_C - d(S_Cd)S_C = dS_C - d(1-dS_C)S_C = 0$ as $d^2 = 0$, hence $dS_C S_C$ is $[d,-]$-closed and so $dH + Hd = 1$. Moreover, we verify
			\begin{nalign}
				H^2 &= S_CS_C - dS_CS_CS_C - S_CdS_CS_C + dS_CS_CdS_CS_C \\
				&= S_CS_C - S_CS_C + (dS_CS_Cd)(S_CS_C) = 0
			\end{nalign}
			so the claim is proven.
		\end{proof}
	\end{corollary}
	\begin{def1}
        \label{DefFreFred}
		A \emph{Fréchet-Fredholm cdg-algebra} is a complete locally convex cdg-algebra $A = (A^*, 1_A, d, \mu, B)$ such that
		\begin{itemize}
			\item $(A^*,d)$ is a non-negatively graded Fréchet-Fredholm pre-complex,
			\item $\mu : A \hat \otimes A \to A$ is of tame degree $0$ (with respect to the seminorms $(A \hat \otimes A)_i := A_i \hat \otimes A_i)$.
		\end{itemize}
	\end{def1}
	\begin{example}
		A Fréchet-Fredholm cdg algebra which is concentrated in degree zero is equivalently just a finite-dimensional algebra. 
	\end{example}
	\begin{example}
		Consider the Fréchet algebra of formal power series $\mathbb C[[z^1,...,z^n]]$ where the $i$-th seminorm is the sum of absolute values of coefficients of all monomials of degree $\leq i$. Now consider the formal de-Rham algebra $\mathbb C[[z^1,...,z^n]] \otimes \Wedge^* \mathbb C^n$ endowed with the usual de-Rham differential
		\[ d(f \otimes \omega) := \sum_{i = 1}^n \partial_i f \otimes dz^i \wedge \omega.\]
		This algebra is a Fréchet-Fredholm cdg-algebra with Parametrix
		\[ H(f \otimes \omega) := \frac{(-1)^{p+1}}{p+q} \sum_{i=1}^n f \cdot z^i \otimes \iota_{\partial_i} \omega\]
		for $f$ a homogeneous polynomial of degree $q$ and $p$ is the degree of $\omega$. Its cohomology is $\mathbb C$ concentrated in degree $0$.
	\end{example}
    \subsection{The contraderived category and cohesive modules}
    In this section, we will review two basic categories we associate to a cdg-algebra $A$.
    \begin{def1}
        Let $A$ be a complete locally convex cdg-algebra. We define $\text{Ac}^{ctr}(A)$ to be the minimal thick triangulated subcategory of $A\text{-Mod}$ closed under products which contains the class of absolutely acyclic modules. These are totalisations of sequences 
        \[ 0 \to M \to N \to F \to 0\]
        of cdg-modules which admit continuous $\mathbb C$-linear horizontal splittings. The Verdier quotient by the class of \emph{contra-acyclic} cdg-modules will be denoted by 
        \[ D^{ctr}(A) = H^0(A\text{-Mod}) / \text{Ac}^{ctr}(A).\]
        The resulting category is called the \emph{contra-derived category} of $A$.
    \end{def1}
    Contra-derived categores were introduced in a discrete setting in \cite{posi}. As we see in Theorem \ref{TheoremContraModel}, they actually behave better in our complete locally convex framework. 
    \begin{def1}
    \begin{itemize}
            \item Denote by $A^\#$ the underlying graded algebra of $A$ (where we forget about curvature and differential). A graded module over $A^\#$ is just a cdg-module over $A^\#$ with vanishing differential. Any cdg-module $M$ over $A$ determines a graded module $M^\#$ over $A^\#$ by forgetting differentials.
            \item A \emph{graded-projective} cdg-module over $A$ is a cdg-module $M$ such that its underyling graded $A^\#$-module $M^\#$ is projective. This means that $M^\#$ is a direct summand of a free graded $A^\#$-module which are by definition of the form $A^\# \hat \otimes V$ where $V$ is a graded complete locally convex vector space. Denote by $A\text{-Mod}_{gr.pr.}$ the corresponding full dg-subcategory $A\text{-Mod}$.
            \item A cdg-module $M$ is called \emph{cohesive} if $M^\#$ is a direct summand of some $A^\# \hat \otimes V = A^\# \otimes V$ with $V$ finite-dimensional. The full dg-subcategory of $A\text{-Mod}$ consisting of cohesive cdg-modules will be denoted by $\mathcal P_A$.
        \end{itemize}
    \end{def1}
    \begin{theorem}[Theorem 1.3.8 in \cite{cHH}]
        \label{TheoremContraModel}
        The restriction of the localisation functor $H^0(A\text{-Mod}) \to D^{ctr}(A)$ to graded-projective cdg-modules and graded-free cdg-modules are equivalences of categories 
        \[ H^0(A\text{-Mod}_{gr.free}) \xrightarrow{\sim} H^0(A\text{-Mod}_{gr.pr}) \xrightarrow{\sim} D^{ctr}(A).\]
        In particular, the homotopy category of cohesive modules $H^0(\mathcal P_A)$ is a full subcategory of $D^{ctr}(A)$.
    \end{theorem}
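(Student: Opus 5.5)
This is Theorem 1.3.8 of \cite{cHH}; I would adapt Positselski's discrete argument to the complete locally convex setting, which has continuous splittings and products built in. The plan has two parts. First, show that for every graded-projective $P$ and every contra-acyclic $X$, the complex $\text{Hom}_A(P,X)$ is acyclic, i.e.\ $H^0\text{Hom}_A(P,X)=0$. Second, show that every cdg-module $M$ admits a morphism $P\to M$ from a graded-free $P$ whose cone is contra-acyclic. With these two facts the standard Verdier-quotient argument applies: $H^0(A\text{-Mod}_{gr.pr})$ is left orthogonal to $\text{Ac}^{ctr}(A)$ and maps essentially surjectively onto $D^{ctr}(A)$, so the functor is fully faithful and essentially surjective. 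The graded-free version follows in the same way, since the resolution in the second step is already graded-free. The final claim is then immediate, because cohesive modules are graded-projective.

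For the orthogonality step, the objects $Y$ with $\text{Hom}_A(P,Y)$ acyclic form a thick triangulated subcategory closed under products, since $\text{Hom}_A(P,-)$ commutes with products. It therefore suffices to treat totalisations of short sequences $0\to M\to N\to F\to 0$ with continuous $\mathbb C$-linear splittings. Because $P^\#$ is a summand of some $A^\#\hat\otimes V$, applying $\text{Hom}_{A^\#}(P^\#,-)$ to such a sequence gives a short sequence that is still split exact. The point is that $\text{Hom}_{A^\#}(A^\#\hat\otimes V,N)=\text{Hom}(V,N)$, and the continuous linear splittings induce splittings of $\text{Hom}(V,-)$. So $\text{Hom}_A(P,-)$ carries the sequence to a short exact sequence of complexes, and the totalisation of a short exact sequence of complexes is acyclic.

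For the resolution step, I would use the bar-type construction. The free module $A\hat\otimes M$ carries a cdg-structure, with differential $d_A\otimes 1+1\otimes d_M$ corrected by a term involving $B$ so that the curvature comes out right; equivalently, one can take Positselski's $G^+(M)$. There is a surjection $A\hat\otimes M\to M$ with a continuous linear splitting $m\mapsto 1\otimes m$. Iterating gives a bar resolution $\cdots\to A\hat\otimes A\hat\otimes M\to A\hat\otimes M$, with each term graded-free. I then form its totalisation using products, taking the product-totalisation rather than the direct sum. The cone of $\mathrm{Tot}^{\prod}\to M$ is contra-acyclic: it is an inverse limit of finite truncations, each of which is built from linearly split sequences and is therefore absolutely acyclic. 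To pass from the truncations to the limit I would use that $\text{Ac}^{ctr}$ is closed under products, via a Milnor-type telescope triangle in which the tower maps are split surjections. It remains to check that $\mathrm{Tot}^{\prod}$ is graded-projective. This uses the crucial fact that $\hat\otimes_\pi$ commutes with products, so $\prod_n A\hat\otimes V_n\cong A\hat\otimes\prod_n V_n$ is graded-free.

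I expect the main obstacle to be this last step: handling the product-totalisation and the Milnor triangle topologically. In particular, the splittings must be continuous, and the identification $\prod(A\hat\otimes V_n)=A\hat\otimes\prod V_n$ must be compatible with the $A$-action. Controlling the curvature term in the free cdg-module $G^+(M)$ should be routine.
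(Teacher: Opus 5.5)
The paper gives no proof of this statement; it is imported from \cite{cHH}. The only hint in the text is that contraderived categories behave better here because $\hat\otimes_\pi$ commutes with arbitrary products. Your two-step plan is the Positselski argument that this hint points to: left orthogonality of graded-projectives to $\text{Ac}^{ctr}(A)$, plus a product-totalised graded-free resolution made free by $\prod_n(A^\#\hat\otimes V_n)\cong A^\#\hat\otimes\prod_n V_n$. The orthogonality half is correct as written.

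There is, however, a concrete error in the resolution step.

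\emph{The naive free module is not a cdg-module.} When $B\neq 0$, the graded module $A^\#\hat\otimes M^\#$ need not carry any cdg-module structure, corrected or not. Indeed $(d_A\otimes 1+1\otimes d_M)^2(a\otimes m)=Ba\otimes m-aB\otimes m+a\otimes Bm$, and no correcting term need exist. For example, take $A=\mathbb C[B]$ with $|B|=2$, $d_A=0$, and $M=\mathbb C$ with $B$ acting by $0$. Then $A\otimes M=A$ lives in even degrees, so every degree-one operator on it vanishes and none squares to multiplication by $B$. Already $(A,d_A)$ is not a left cdg-module over itself, since $d_A^2=[B,-]$.

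\emph{Consequences for your construction.} $G^+$ is therefore not an equivalent alternative but the necessary replacement; the paper itself uses $G^+$, with doubled fibre, in the proof of Theorem \ref{Regularity}. The bar complex $\cdots\to A\hat\otimes A\hat\otimes M\to A\hat\otimes M$ does not exist as a complex of cdg-modules.

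\emph{The fix: iterate on kernels.} Set $K_0=M$, $P_n:=G^+(A^\#\hat\otimes K_n)$ and $K_{n+1}:=\ker(P_n\to K_n)$. Here $P_n\to K_n$ is adjoint to the action map and is continuously split by $k\mapsto 1\otimes k$. Each $P_n$ is graded-free, with fibre two copies of $K_n$, one shifted by one degree. Each $K_{n+1}$ is closed, hence complete.

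\emph{Truncations and the Milnor sequence.} The finite truncations whose limit you take must be
\[ 0\to K_{N+1}\to P_N\to\cdots\to P_0\to M\to 0 .\]
The naive truncations $P_N\to\cdots\to M$ are not exact at $P_N$, so they are not absolutely acyclic. The transition maps are induced by the split surjections $P_N\to K_N$ with sections $s_N$. Then $1-\text{shift}$ on $\prod_N$ has the continuous linear section $(y_N)\mapsto(x_N)$ given by $x_0=0$ and $x_{N+1}=s_N(x_N-y_N)$. So the Milnor sequence is linearly split, and your closure-under-products argument goes through.

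With these repairs the argument is complete. The real subtlety is the curvature, not the topology of $\text{Tot}^{\prod}$, which the product property handles immediately.
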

    \begin{lemma}[Homotopy transfer]
        \label{HtpyTransfer}
        Let $A$ be concentrated in non-negative degrees. Let $(N, \mathbb E_0) \in A^0\text{-Mod}_{gr.pr}$ and $M \in A\text{-Mod}_{gr.pr}$.
        Let
        \[ f_0 : (N, \mathbb E_0) \to A^0 \otimes_{A^*} M \]
        be an $A^0$-linear map which is an isomorphism in $D^{ctr}(A^0)$.
        Then there exists a differential $\mathbb E$ on $A^\# \hat \otimes_{A^0} N$ extending $\mathbb E^0$, and a map
        \[ f: (A^\# \hat \otimes_{A^0} N, \mathbb E) \to M\]
        extending $f_0$ which is an isomorphism in $D^{ctr}(A)$.    
        \begin{proof}[Proof sketch]
            For every $i \in \mathbb N$, we consider the map
            \[ (\text{id}_{A^i} \hat \otimes f_0)_* : \text{Hom}^*_{A^0}(N,  A^i \hat \otimes_{A^0} N) \to \text{Hom}^*_{A^0}(N,  A^i \hat \otimes_{A^*} M).\]
            As $N$ is graded-projective, this map is a quasi-isomorphism. We need to find $A^0$-linear maps $f^i : N \to A^i \hat \otimes_{A^*} M$ and $\mathbb E^i : N \to A^i \hat \otimes_{A^0} N$, linear for $i \neq 0$ and fulfilling the Leibniz rule for $i = 1$ such that $f = \sum_i f^i$, $\mathbb E= \sum_i \mathbb E^i$ fulfill \begin{align}\label{closedness} \mathbb E^2 = B \cdot (-) \text{ and } \mathbb F f - f \mathbb E = 0 \end{align} Choose some connection $\nabla : N \to A^1 \hat \otimes_{A^0} N$, not neccesarily flat but fulfilling the Leibniz rule (this is possible as $N$ is projective). Then the choice of $\mathbb E^1 - \nabla$ is $A^0$-linear. The failure of \ref{closedness} can be considered iteratively as cocycles in the cone of $(\text{id}_{A^i} \hat \otimes f_0)_*$. A choice of primitive corresponds to the next step in the construction of $\mathbb E$ and $f$. This process is potentially infinite, but the sums $f = \sum_i f^i$, $\mathbb E= \sum_i \mathbb E^i$ are convergent as we totalize tensor-products using direct products. Finally, one shows that $f$ is a contra-derived isomorphism by testing against graded-projective $A$-modules and using the natural filtrations
            \[ F^i M = A^{\geq i} \hat \otimes_{A^*} M, F^i N = A^{\geq i} \hat \otimes_{A^*} N.\]
        \end{proof}
    \end{lemma}
    \begin{corollary}
        \label{CohesiveIdComplete}
        Let $A$ be a cdg-algebra which is concentrated in non-negative degrees. Then we have that homotopy category of cohesive modules $H^0(\mathcal P_A)$ is idempotent complete.
        \begin{proof}
            Let $M$ be a cohesive module over $A$ together with an idempotent $i$ on it. Then $i$ splits in $D^{ctr}(A)$ as $D^{ctr}(A)$ is triangulated and contains products \cite{HomLimits}. Hence $i$ is realized by a decomposition $M \simeq P \oplus Q$ in $D^{ctr}(A)$. Then $A^0 \hat \otimes_{A^*}^L P$ is a homotopy direct summand of $A^0 \hat \otimes_{A^*} M$ which is a complex of finitely-generated projective $A^0$-modules. By \cite{HomLimits}, homotopy idempotents split in the category of complexes of finitely-generated projective $A^0$-modules. Hence $A^0 \hat \otimes_{A^*}^L P \simeq \tilde P$ where $\tilde P$ is cohesive over $A^0$. By the previous Lemma, $\tilde P$ can be endowed with a superconnection, making it into a cohesive module which is isomorphic to $P$ in $D^{ctr}(A)$. 
        \end{proof}
    \end{corollary}
	\begin{proposition}
		\label{PropProper}
		Let $A$ be a Fréchet-Fredholm cdg-algebra. Then the morphism spaces in the homotopy category of $\mathcal P_A$ are finite-dimensional. In fact, the morphism complexes in $\mathcal P_A$ are Fréchet-Fredholm complexes and are thus continuously homotopy equivalent to their cohomology.
		\begin{proof}
			Let $M$ and $N$ be cohesive modules over $A$. As any cohesive module is a strict summand of a free-cohesive module \cite{RHHolstein}, it suffices to assume that the underlying graded of $M$ has the form $A^\# \otimes V $ and $N^\# = A^\# \otimes W$ for finite-dimensional graded-vector spaces $V$ and $W$. The underlying graded of $\text{Hom}(M,N)$ then has the form
			\[ \text{Hom}(M,N) = (V^\vee \otimes A \otimes W, D).\]
			We can write the differential as 
			\[ D = \text{id} \otimes d \otimes \text{id} + f \cdot (-) + (-) \cdot g =: d + \mathbb E\]
			where $d$ is the derivation of $A$. By assumption we have that the multiplication $\mu$ is of tame degree $0$ and so $K$ has tame degree $0$. 
			
			As $A$ is a Fréchet-Fredholm cdg-complex there exists an operator $S$ of tame degree $1$ such that
			\[ T - dS - Sd =: K\]
			is of tame degree $1$ and $T$ is invertible of tame degree $0$. There are at least 2 ways of making $(V^\vee \otimes A \otimes W, D)$ into a Fréchet-Fredholm complex (and either choice yields the claim of the Lemma):
			\begin{itemize}
				\item Endow $V^\vee \otimes A \otimes W$ with the seminorms 
				\[ (V^\vee \otimes A \otimes W)^k_i = \prod_{a+b+c = k} (V^\vee)^a \otimes A^b_i \otimes W^c \]
				and write
				\[ T - DS -SD = - \mathbb E S - S \mathbb E + K\]
				With the choice of the above seminorms, $- \mathbb E S  -  S \mathbb E $ is of tame degree $1$ (as $\mathbb E$ is induced by multiplication and multiplication has tame degree $0$) and so is $K$.
				\item Endow $V^\vee \otimes A \otimes W$ with the seminorms 
				\[ (V^{\vee} \otimes A \otimes W)^k_i = \prod_{a+b+c = k} (V^\vee)^a \otimes A^b_{i+a+c} \otimes W^c \]
				In this case, write $\mathbb E_0$ for the component $V^\vee\otimes W \to V^\vee \otimes A^0 \otimes W$ and $\mathbb E_{\geq 1} := \mathbb E - \mathbb E_0$ and then
				\begin{align}
					\label{EqDecomposition}
					(T + \mathbb E_0 S + S \mathbb E_0) - DS - SD = -\mathbb E_{\geq 1} S - S \mathbb E_{\geq 1} + K.
				\end{align}
				The right-hand side is of tame degree $1$ and the crucial observation is that $(T + \mathbb E_0 S + S \mathbb E_0)$ is invertible as $T^{-1}(\mathbb E_0 S + S \mathbb E_0)$ is nilpotent (so the geometric series converges). Thus, $S$ is still a parametrix for this complex.
			\end{itemize}
			
		\end{proof}
	\end{proposition}
	\begin{remark}
        \label{RmkQF}
		We will make use of the second choice of seminorms as in the proof. It has the advantage that
		\[ \prescript{}{i}(V^\vee \otimes A \otimes W) = (V^\vee \otimes \prescript{}{i}{A} \otimes W) \]
		in the notation of Definition \ref{defBanachLocalisation}.
		In other words, this means that
		\[ \prescript{}{i}{\text{Hom}_{A}(M,N)} =  \text{Hom}_{\prescript{}{i}{A}}(\prescript{}{i}{M},\prescript{}{i}{N})\]
		where $\prescript{}{i}{M} = \prescript{}{i}{A} \hat \otimes_A M  $ which already proves that the functors $\mathcal P_{A} \to \mathcal P_{ \prescript{}{i}{A}}$ are quasi-fully faithful (in fact we will see that they are dg-equivalences).
	\end{remark}
	
	\begin{def1}
		Let $A$ be a Fréchet-Fredholm cdg-algebra. Let $M, N \in \mathcal P_{A}$. We define their \emph{relative Euler characteristic} as
		\[ \chi(M,N) := \sum_{i} (-1)^i H^i (\text{Hom}^*(M,N)) \in \mathbb Z.\]
		We note the following: Given Maurer-Cartan elements $\mathbb E \in \text{Hom}^1(M,M)$ and $\mathbb F \in \text{Hom}^1(N,N)$ the differential on 
		$\prescript{}{i}{\text{Hom}}^*(M,N)$ differs from $\prescript{}{i}{\text{Hom}}^*((M,d + \mathbb E),(N, d + \mathbb F))$ only by a compact perturbation. Hence by Remark \ref{RmkEuler}, the Euler characteristic only depends on the underlying graded $A^\#$-modules of $M$ and $N$.
	\end{def1}

	\subsection{Examples}
	In this subsection we will consider some examples arising in differential geometry.
	\begin{example}
		Let $M$ be a compact smooth manifold.
		Consider a finite collection $V_0,...,V_n$ of smooth vector bundles on $M$ and suppose we have a complex
		\begin{align}\label{EllipticSequence} 0 \to \Gamma(V_0) \xrightarrow{d_0} ... \xrightarrow{d_{n-1}} \Gamma(V_n) \to 0 \end{align}
		with differentials being first-order differential operators. Such a complex is called \emph{elliptic} if the corresponding symbol sequence
		\[ 0 \to V_0 \xrightarrow{\sigma(d_0)} ... \xrightarrow{\sigma(d_{n-1})} V_n \to 0\]
		is exact. Endowing $\Gamma(-)$ with a choice of Sobolevnorms $H^{i,2}$, the standard elliptic operator theory yields a projection $K$ (typically denoted by $H$ for \emph{Harmonic}) onto the cohomology (of infinite tame degree) and the Green's operator $S$ (typically denoted by $G$) is a tame degree $1$ map fulfilling $dS + Sd = 1 - H$. The Rellich-Lemma for Sobolev spaces yields compactness of the completions $H^{i+1,2}(V_k) \to H^{i,2}(V_k)$, and thus approximability as these spaces are Hilbert. Therefore, in our terminology, complex \ref{EllipticSequence} is a Fréchet-Fredholm complex.
	\end{example}
	\begin{example}
		Let $M$ again be a compact smooth manifold and $(L \to M, [-,-], \rho : L \to TM)$ be a (real/complex) Lie-algebroid on $M$. We consider its Chevalley-Eilenberg algebra $(\Gamma(\Wedge^* L^*), \wedge, d_{CH})$. We suppose that $L$ is elliptic, i.e., that the complex
		\[ 0 \to C^\infty(M) \to \Gamma(L^*) \to ... \to \Gamma(\Wedge^n L^*) \to 0\]
		is elliptic. By the previous example, this complex is a Fréchet-Fredholm complex. Moreover, by the Sobolev multiplication theorem, the product on smooth functions (and thus also on the above algebra)
		extends to continuous operators 
		\[ H^{k,2} \hat \otimes H^{k,2} \to H^{k,2}  \]
		for $k$ large enough. It follows that $(\Gamma(\Wedge^* L^*), \wedge, d_{CH})$ is a Fréchet-Fredholm dg-algebra. In particular, letting $L = TM$, we find that the de-Rham complex $\Omega^*(M)$ of $M$ is a Fréchet-Fredholm dg-algebra.
	\end{example}
	\begin{example}
		Let $X$ be a compact complex analytic manifold. The Dolbeault algebra $\mathcal A(X) := (\Omega^{0,*}(X), \wedge, \bar \partial)$ endowed with Sobolev-seminorms is a Fréchet-Fredholm complex, again by the Sobolev multiplication theorem, Rellich Lemma and the fact that $\bar \partial$ is elliptic.

	\end{example}
	\begin{example}
		Let $M$ be a compact smooth manifold and $(E \to M,\nabla)$ a smooth vector bundle with connection $\nabla$. We can form its "de-Rham pre-complex" $\Omega^*(M,E)$ with differential induced by $\nabla$ and we claim that it is a Fréchet-Fredholm pre-complex. Choose a trivial vector bundle $\underline{\mathbb K^m}$ so that $E$ is a direct summand of $\underline{\mathbb K^m}$ via $i : E \to \underline{\mathbb K^m}$ and $p : \underline{\mathbb K^m} \to E$. On the vector bundle $\underline{\mathbb K^m}$ we have two different connections. We denote by $\nabla_{triv}$ the trivial flat connection on $\underline{\mathbb K^m}$. We also define the following connection: 
		\[ \tilde \nabla(f) := i \nabla p(f) + (ip-1) \nabla_{triv} (ip-1)(f).\]
		With this connection $(E, \nabla)$ is a direct summand of $(\underline{\mathbb K^m}, \tilde \nabla)$ compatible with connections. The de-Rham complex $(\Omega(M,\underline{\mathbb K^m}), \nabla_{triv}) = (\Omega(M),d_{DR})^{\oplus m}$ is Fréchet-Fredholm (as seen above). Now $\tilde \nabla = \nabla^{triv} + \omega \Wedge (-)$ for some $\omega \in \Omega^1(M, \text{End}(\mathbb K^m))$ but multiplication is of tame degree $0$, thus $(\Omega(M,\underline{\mathbb K^m}), \tilde \nabla)$ is also Fréchet-Fredholm pre-complex. Since $(\Omega(M, E), \nabla)$ is a direct summand (compatible with all the structure) of this pre-complex, $(\Omega(M, E), \nabla)$ is also Fréchet-Fredholm.
		
		Moreover, we may form the cdg-algebra $(\Omega^*(M, End(E)), [\nabla,-], \nabla^2)$. The above reasoning applied to $End(E)$ shows that this cdg-algebra is a genuine curved non-commutative Fréchet-Fredholm algebra.
	\end{example}
	
	\subsection{Uhlenbeck regularity}
    This subsection is concerned with regularity of cohesive modules over Fréchet-Fredholm cdg-algebras. Specifically, we will show that every cohesive module $M$ over the completed algebra $\prescript{}{i}{A}$ lifts to a cohesive module over $A$. For example, if $\Omega^*(M)$ is the de-Rham algebra of a compact manifold $M$, our result implies that any $H^{i,2}$-measurable \emph{flat} superconnection lifts to a smooth superconnection (where $i$ is large enough so that the standard Sobolev estimates hold).

    The ideas we will adopt are due to Uhlenbeck \cite{Uhlenbeck}. The main ingredient consists of finding a suitable gauge condition which, together with flatness, imply elliptic estimates, so that the connection can be lifted. To state the essential argument, suppose $(M,d) \in \mathcal P_{A}$ is a cohesive module over $A$. Then consider a perturbation $\mathbb E \in \text{Hom}^1(\prescript{}{i}{M}, \prescript{}{i}{M})$ which satisfies the Maurer-Cartan equation, thus $(\prescript{}{i}{M}, d + \mathbb E)$ is a cohesive module over $\prescript{}{i}{A}$. Now suppose $\mathbb E$ satisfies the \emph{Coulomb gauge condition}
    \[ S(\mathbb E) = 0\]
    where $S$ is a parametrix on $\text{Hom}(M,M)$. We have $d(\mathbb E) + \mathbb E^2 = 0$ by assumption and from $1-K = dS + Sd$ we get
    \[ \mathbb E = \prescript{}{i}{[K(\mathbb E) - S(\mathbb E^2)]}.\]
    Hence, $\mathbb E$, of regularity $i$, lifts to $-K(\mathbb E) - S(\mathbb E^2)$ which has regularity $i+1$.
	\begin{theorem}
		\label{Regularity}
		For $i \in \mathbb N$ large enough, the functors $\mathcal P_{A} \to \mathcal P_{\prescript{}{i}{A}}$ are dg quasi-equivalences. In fact, for any two objects $M$ and $N$ of $\mathcal P_{A}$, the map
		\[ \text{Hom}^*_{A}(M,N) \to \text{Hom}^*_{\prescript{}{i}{A}}( \prescript{}{i}{A} \otimes_{A} M  ,\prescript{}{i}{A} \otimes_A N  )\]
		is a continuous homotopy equivalence.
		\begin{proof}
			The (continuous) quasi-fully faithfulness follows from Proposition \ref{PropProper} and Lemma \ref{LemmaFreBootstrap} (see Remark \ref{RmkQF}).
            To show that the functor is quasi-essentially surjective, it suffices to show that every finitely-generated free twisted-module $(\prescript{}{i}{A} \otimes V , D = d + \mathbb E)$ can be lifted to a cohesive $A$-module by Corollary \ref{CohesiveIdComplete}. Consider the graded $A^\#$-module
			\[ T := A^\# \otimes (V \oplus V[1])^.\]
			Elements in the first summand will be denoted $v$ and those living in the second $w$.
			We make $T$ into a cdg-module over $A$ by viewing it as the $G^+$ construction of the graded module $A^\# \otimes V$. This corresponds to setting the differential equal to 
			\[ D^{G^+}(v,w) := (B \otimes w,v)\]
			and it is extended by the Leibniz-rule. The resulting cdg-module over $A$ will be denoted by $G^+$. The crucial point is that this is a cdg-module for $A$, not just for $\prescript{}{i}{A}$ (in the case that the curvature is equal to $0$, we do not have to do this extra step and instead just consider the auxiliary differential $(d_A \otimes 1)$ on $A \otimes V$). We introduce another differential, now on the completed space $\prescript{}{i}{T} = \prescript{}{i}{A}^\# \otimes (V \oplus V[1])  $ which simply comes from viewing $\prescript{}{i}{T}$ as $T^0 := (\prescript{}{i}{A} \otimes V , D = d + \mathbb E) \oplus (\prescript{}{i}{A} \otimes V[1], D = d + \mathbb E)$. This differential will be denoted by $D_0$. Note that by construction, $(\prescript{}{i}{A} \otimes V , D = d + \mathbb E)$ is a direct summand of $T^0$. 
			
			We are now in the situation of Lemma \ref{LemmaLiftMC} applied  to the cohesive module $G^+$ and the $\prescript{}{i}{A}$-Maurer-Cartan element $D_0 - D^{G^+}$. By the Lemma, $T^0 = (\prescript{}{i}{T}, D_0)$ is $\prescript{}{i}{A}$-gauge equivalent to a cohesive $A$-module of the form $(T, D')$. In particular, $(\prescript{}{i}{T}, D_0)$ lies in the essential image of $\mathcal P_{A} \to \mathcal P_{\prescript{}{i}{A}}$. By the quasi-fully faithfulness, the homotopy idempotent on $(\prescript{}{i}{T}, D_0)$ lifts to a homotopy idempotent on $(T, D')$. As $\mathcal P_{A}$ is homotopy idempotent complete by Corollary \ref{CohesiveIdComplete}, the homotopy idempotent splits, yielding a homotopy lift of $(\prescript{}{i}{A} \otimes V , D = d + \mathbb E)$ to a cohesive module over $A$.
		\end{proof}
	\end{theorem}
	\begin{lemma}
		\label{LemmaLiftMC}
		Let $A$ be a Fréchet-Fredholm cdg-algebra and let $(T,d) \in \mathcal P_{A}$ be a cohesive module over $A$. Set $ \prescript{}{i}{T} := \prescript{}{i}{A} \hat \otimes_{A} T \in \mathcal P_{ \prescript{}{i}{A}}$. Let
		\[ \mathbb E \in \text{Hom}^1_{\prescript{}{i}{A}}(\prescript{}{i}{T}, \prescript{}{i}{T})\]
		be a Maurer-Cartan element, so that $(\prescript{}{i}{T}, d+\mathbb E)$ defines a cohesive module over $\prescript{}{i}{A}$. Then there exists a gauge transformation
		\[ g \in \text{Hom}^0_{\prescript{}{i}{A}}(\prescript{}{i}{T}, \prescript{}{i}{T})^{\times}\]
		such that $\mathbb E^{g} = g^{-1} \mathbb E g + g^{-1} d(g)$ lifts to Maurer-Cartan element in $\text{Hom}^1_{A}(T, T)$.
		\begin{proof}
			The complex $(M,D) := (\text{Hom}^*_{A}((T,d),(T,d)), D = [d,-])$ is a Fréchet-Fredholm complex by Proposition \ref{PropProper}. It thus admits a strong Parametrix $S$ by Corollary \ref{CorStrongParametrix} such that
			\begin{align}
				&\label{firstId} DS + SD = 1 - K \\
				&\label{secondId} S^2 = 0
			\end{align}
			where $K$ and $S$ are of tame degree $1$. Let $D^{\mathbb E} = [d+ \mathbb E,-]$ be the differential on $\text{Hom}^*_{\prescript{}{i}{A}}((\prescript{}{i}{T},d + \mathbb E),(\prescript{}{i}{T},d + \mathbb E))$. From \ref{firstId} we obtain the identity \begin{align}\label{thirdId} D^{\mathbb E} S + S D^{\mathbb E}  = 1-K + [\mathbb E,S] =: 1 - \tilde K \end{align}
			where $\tilde K$ is approximable (since $\mathbb E$ is of tame degree $0$ and $S$ is of tame degree $1$).
			
			We claim that $\prescript{}{i}{\text{Ran}(SD^{\mathbb E} )}$ is closed in $\prescript{}{i}{\text{Ran}(S)} \subset \prescript{}{i}{M}$ and has finite codimension (note that $\prescript{}{i}{\text{Ran}(S)}$ itself is closed as $S$ is a strong parametrix). To see that, consider the restriction
			\[ SD^{\mathbb E}|_{\prescript{}{i}{\text{Ran}(S)}}   : \prescript{}{i}{\text{Ran}(S)} \to \prescript{}{i}{\text{Ran}(S)}.\]
			From \ref{secondId} and \ref{thirdId} we obtain that $SD^{\mathbb E}|_{\prescript{}{i}{\text{Ran}(S)}}  = (1-\tilde K)|_{\prescript{}{i}{\text{Ran}(S)}} $.  Thus, $SD^{\mathbb E} $ is a Fredholm operator on the range of $S$ since $\tilde K$ is compact and hence $\prescript{}{i}{\text{Ran}(SD^{\mathbb E} )}$ is closed in $\prescript{}{i}{\text{Ran}(S)} $ and has finite codimension. 
			
			Now consider the finite-dimensional quotient space $\prescript{}{i}{\text{Ran}(S)} / \prescript{}{i}{\text{Ran}(SD^{\mathbb E} )}$. By the density of $M \to \prescript{}{i}{M}$, we can find $m_1,...,m_n \in M$ such that $[S(m_1)],...,[S(m_n)]$ constitutes a basis for $\prescript{}{i}{\text{Ran}(S)} / \prescript{}{i}{\text{Ran}(SD^{\mathbb E} )}$. We let $C := \langle S(m_1),...,S(m_n) \rangle$ which serves as a complement of $\prescript{}{i}{\text{Ran}(SD^{\mathbb E} )}$, i.e., 
			\[ \prescript{}{i}{\text{Ran}(SD^{\mathbb E} )} \oplus C = \prescript{}{i}{\text{Ran}(S)}.\] Denote by $\pi$ the canonical projection
			\[ \pi : \prescript{}{i}{\text{Ran}(S)}  \to \prescript{}{i}{\text{Ran}(S)} / C \cong \prescript{}{i}{\text{Ran}(SD^{\mathbb E} )} \]
			and consider a neighbourhood $U \subset \prescript{}{i}{\text{Ran}(S)^0}$ of $0$ such that $1+u$ is an invertible element in $\prescript{}{i}{M^\times}$ for all $u \in U$ (we use that $\prescript{}{i}{M}^0$ is a Banach-algebra). Then set \[ \prescript{}{i}{M}^{\times}_{\text{Ran}(S)} := 1 + U\]
			and define the (non-linear) map:
			\[ \Phi : \prescript{}{i}{M}^1 \times \prescript{}{i}{M}^{\times}_{\text{Ran}(S)} \to \prescript{}{i}{\text{Ran}(S)} / C, (\mathbb F, g) \mapsto \pi \circ S(\mathbb E^g - \mathbb F)\]
			where $\mathbb E^g$ is defined by
			\[ \mathbb E^g := g^{-1} D (g) + g^{-1} \mathbb E g\]
			so that $g^{-1} \circ D^{\mathbb E} \circ g = D + \mathbb E^g$. Its derivative at $(\mathbb E,1)$ in the second component is given by
			\[ d\Phi_{(\mathbb E,1)}(0,f) = \lim\limits_{h \to 0} \frac{\pi S(\mathbb E^{1+hf}-\mathbb E)}{h} = \pi \circ SD^{\mathbb E}(f)\]
			which is surjective by construction of $C$ and Fredholm. By the local normal form for maps with surjective Fredholm derivative, there are is a neighbourhood $V \times W \subset \prescript{}{i}{M}^1 \times \prescript{}{i}{M}^{\times}_{\text{Ran}(S)}$ of $(\mathbb E,1)$ such that for any $\mathbb F \in V$, there exists $g \in W$ fulfilling \begin{align} \label{EqPhiL} \Phi(\mathbb F, g) = \pi \circ S(\mathbb E^g - \mathbb F) = 0\end{align}		i.e., $S(\mathbb E^g - \mathbb F) \in C$ which implies that $S(\mathbb E^g - \mathbb F)$ is in the image of $M \to \prescript{}{i}{M}$, by construction of $C$. As $M^1 \to \prescript{}{i}{M}^1$ is dense, we can find $\mathbb F \in M^1$ such that $[\mathbb F] \in V \subseteq \prescript{}{i}{M}^1$. Then choose $g$ fulfilling equation \ref{EqPhiL} and hence
			\[ S(\mathbb E^g) \in \text{Ran}(M^1 \to \prescript{}{i}{M}^1).\]
			Let us introduce the notation $\prescript{}{i}[v] \in \prescript{}{i}{M}$ for $v \in \prescript{}{j}{M}$, $j \geq i$. From equation \ref{firstId}, we find
			\begin{align}\mathbb E^g = SD(\mathbb E^g) + DS(\mathbb E^g) - \prescript{}{i}[K(\mathbb E^g)] 
				&= \prescript{}{i}[ -S((\mathbb E^g)^2 + m - K(\mathbb E^g)]
			\end{align}
			where $m \in M$ is chosen such that $\prescript{}{i}[m] = S(\mathbb E^g)$. Thus, \[\mathbb E^l := -S((\mathbb E^g)^2 + m - K(\mathbb E^g) \in \prescript{}{i+1} M^1 \] is a lift of $\mathbb E$. However, unless the completions are injective, $\mathbb E^l$ may not be a Maurer-Cartan element in $\prescript{}{i+1} M^1$. To fix this, we let
			\[ \tilde{\mathbb E} := \mathbb E^l - S(D \mathbb E^l + (\mathbb E^g)^2) \in \prescript{}{i+1} M^1.\]
			Then $\prescript{}{i}[\tilde{\mathbb E} ] = \prescript{}{i}[ \mathbb E^l ] - S(D\mathbb E^g + \prescript{}{i}[(\mathbb E^g)^2]) = \mathbb E^g - 0$ as $\mathbb E^g$ is Maurer-Cartan (and we used that $S$ commutes with completions). Moreover, it is Maurer-Cartan:
			\begin{align}
				D (\tilde{\mathbb E} ) + \prescript{}{i+1}[\tilde{\mathbb E} ^2] &= D(\mathbb E^l - S(D \mathbb E^l + (\mathbb E^g)^2)) + (\mathbb E^g)^2 \\
				&= D(\mathbb E^l) + SD(\mathbb E^2) - D \mathbb E^l - (\mathbb E^g)^2 + \prescript{}{i+1}[K(D \mathbb E^l + \mathbb E^2 ) ] + (\mathbb E^g)^2 \\
				&= K(D \mathbb E^g+ \prescript{}{i}[(\mathbb E^g)^2]) = 0
			\end{align}
			where we used that \[D(D\mathbb E^l + (\mathbb E^g)^2) = D((\mathbb E^g)^2) = D(\mathbb E^g) \mathbb E^g - \mathbb E^g D(\mathbb E^g) = \prescript{}{i+1}[\mathbb E^3 - \mathbb E^3] = 0 .\]
			Thus $\tilde{\mathbb E}$ is a Maurer-Cartan lift of $\mathbb E^g$. By construction, $g$ can be chosen to be arbitrarily small to the identity. Iteration of the above procedure hence yields the desired gauge transformation and Maurer-Cartan lift.
		\end{proof}
	\end{lemma}

	\begin{remark}
		In the situation of the previous theorem, the quasi-fully faithfulness also holds on the level of graded-free modules over $A$ which are of the form $M^\# = A^\# \hat \otimes B$ where $B$ is a Banach space since for such modules $\text{Hom}^*_{A}(M,N)$ naturally carries the structure of a large Fréchet complex. 
	\end{remark}
	\begin{example}
		Consider again the formal de-Rham algebra $\Omega = \mathbb C[[z^1,...,z^n]] \otimes \Wedge^* \mathbb C^n$. We have
		\[  \prescript{}{0}{\Omega}^i = \Omega_{-i}^i = \begin{cases}
			\mathbb C \text{ if } i = 0 \\
			0, \text{else}.
		\end{cases} \]
		Since the parametrix, multiplication and differential have base $0$, the conclusion of Theorem \ref{Regularity} holds already for $i = 0$ and so
		\[ \mathcal P_{\Omega} \simeq \mathcal P_{\mathbb C} = \text{Perf}(\mathbb C).\] 
	\end{example}

	\subsection{The Grauert principle for Fréchet-Fredholm algebras}
	\begin{def1}
		Let $A$ be a complete locally convex cdg-algebra. A module $M$ is called \emph{contra-perfect} if it is isomorphic to a cohesive module in the contraderived category $D^{ctr}(A)$.
	\end{def1}
	\begin{proposition}
		\label{PerfectCriterion}
		Let $A$ be a Fréchet-Fredholm cdg-algebra and let $M$ be an object in $D^{ctr}(A)$ of the form
		\[ M^\# = A^\# \hat \otimes V\]
		where $V$ is a graded-Fréchet space with approximable completions $V_{i+1} \to V_i$ and the differential on $M$ has the form
		\[ d_M = d_A \hat \otimes 1  + \mathbb E\]
		where $\mathbb E$ extends to $\mathbb E : \prescript{}{i}{V} \to A \hat \otimes \prescript{}{i}{V} $ for each seminorm $i$ large enough on the Fréchet-Fredholm  cdg-complex $V$. Moreover, assume that the $0$-component $\mathbb E^0 :  V \to A^0 \hat \otimes V$ is contained in $\mathbb E_0 \in A \hat \otimes \text{Hom}(\prescript{}{i}{V},\prescript{}{i}{V}) \subset \text{Hom}(\prescript{}{i}{V},A \hat \otimes \prescript{}{i}{V})$ for all $i$ large enough. Furthermore, we assume the existence of an $A^0$-linear parametrix $S \in A^0 \hat \otimes \text{Hom}^{-1}(\prescript{}{i}{V},\prescript{}{i}{V})  $ such that $K = 1-\mathbb E^0S - S\mathbb E^0$ extends to an operator $K \in A^0\hat \otimes  \text{Hom}^{0}(\prescript{}{i}{V},\prescript{}{i+1}{V}) $. In that case $M$ is contra-perfect.
		\begin{proof}
			The strategy is to reduce the claim to Banach modules over Banach algebras and then use the regularity Theorem \ref{Regularity}.
			
			We consider the completed module $\prescript{}{j,i}{M}$ over $\prescript{}{j}{A}$ which is given by
			\[ \prescript{}{j,i}{M} =  \prescript{}{j}{A} \hat \otimes \prescript{}{i}{V} . \]
			The differential is well-defined on this space by assumption. We now show that this module is contra-perfect. By homotopy transfer, Lemma \ref{HtpyTransfer}, it suffices to show that the underlying $\prescript{}{j}{A}^0$ module \[\prescript{}{j}{A}^0 \hat \otimes_{\prescript{}{j}{A}} \prescript{}{j,i}{M}  = \prescript{}{j}{A}^0 \hat \otimes \prescript{}{i}{V}  \] is contra-perfect. This module is a free Banach module over the Banach algebra $\prescript{}{j}{A}^0$ and the Parametrix $S$ for $A \hat \otimes \prescript{}{i}{V}$ induces a parametrix for this module fulfilling the condition of Corollary \ref{CorCharacProj} (using the regularity assumption on $K$ and the approximability of the completions). By the Corollary, it is thus contra-perfect. We conclude that there exists a cohesive module over $\prescript{}{j}{A}$ which is homotopy equivalent to $\prescript{}{j,i}{M}$. By the regularity result \ref{Regularity} there exists a unique lift of this module to a cohesive module over $A$ which we denote by $R$. 
			
			It remains to show that $R$ is homotopy equivalent to $M$. We will do this in two steps. First, we introduce the semi-completed module $\prescript{}{i}{M}$ given by $\prescript{}{i}{M} = A \hat \otimes \prescript{}{i}{V}$ and we claim that the natural map
			\[ M \to \prescript{}{i}{M}\]
			is an $A$-linear continuous homotopy equivalence. By \cite[Proposition 1.2.14]{cHH}, it suffices to show that the underlying $A^0$-modules are contiuously homotopy equivalent. This follows as in the proof of Lemma \ref{LemmaFreBootstrap} using the $A^0$-linear parametrix $S$ for $A \hat \otimes V^0$ (it is important that $A^0$ is graded-Fréchet and $m$-convex for the proof to work).
			
			The final step now consists of showing that the contiuous homotopy equivalence $\prescript{}{j,i}{M} \simeq \prescript{}{j}{R}$ of $\prescript{}{j}{A}$-modules can be lifted to a continuous homotopy equivalence $\prescript{}{i}{M} \simeq {R}$ of $A$-modules. Consider $P,Q \in \{ \prescript{}{i}{M}, R\}$ and the corresponding $\prescript{}{j}{A}$-modules $\prescript{}{j}{P},\prescript{}{j}{Q}$. Then $\text{Hom}^*_{A}(P,Q)$ carries the structure of a large Fréchet dg-complex and its completion at $j$ is equal to 
			\[ \prescript{}{j}{\text{Hom}^*_{A}(P,Q)} = \text{Hom}^*_{\prescript{}{j}{A}}(\prescript{}{j}{P}, \prescript{}{j}{Q}).\]
			Thus, by Lemma \ref{LemmaFreBootstrap} the map \[\text{Hom}^*_{A}(P,Q) \to \text{Hom}^*_{\prescript{}{j}{A}}(\prescript{}{j}{P}, \prescript{}{j}{Q})\] is a $\mathbb C$-linear continuous homotopy equivalence. This argument shows that the full subcategories of $D^{ctr}(A)$ and $D^{ctr}(\prescript{}{j}{A})$ on the objects $\{ \prescript{}{i}{M}, R\}$ and $\{ \prescript{}{j,i}{M}, \prescript{}{j}{R}\}$ are dg quasi-equivalent. Since the objects are homotopy equivalent in the latter category, they must also be homotopy equivalent in the former.
			
		\end{proof}
	\end{proposition}
	
	\begin{theorem}[Grauert principle for Fréchet-Fredholm cdg-algebras]
		\label{GrauertPrinciple}
		Let $A$, $B$ and $C$ be Fréchet-Fredholm cdg-algebras. The image of the restriction of the functor
		\[ (-) \hat \otimes^L_{B} (-) : D^{ctr}(A \hat \otimes B^{op}) \times D^{ctr}(B \hat \otimes C^{op}) \to D^{ctr}(A\hat \otimes C^{op})\]
		to $\mathcal P_{A \hat \otimes B^{op}} \times \mathcal P_{B\hat \otimes C^{op}}$ consists of contra-perfect modules. In particular, the above functor induces a (continuous) $A_\infty$-functor 
		\[ \mathcal P_{A \hat \otimes B^{op}} \times \mathcal P_{B \hat \otimes C^{op}} \to \mathcal P_{A \hat \otimes C^{op}}.\]
		and thus a stable functor of homotopy categories.

		\begin{proof}
			It suffices to show the claim for graded-free modules. Thus let $M$ and $N$ be of the form
			\[ M^\# = A\hat \otimes V \hat \otimes B^{op}, N^\# = B\hat \otimes W \hat \otimes C^{op}\]
			with $V$ and $W$ finite-dimensional. The underlying graded of their tensor product is 
			\[ (M \hat \otimes_B N)^\# = A \hat \otimes (V \hat \otimes B \hat \otimes W) \hat \otimes C^{op}.\]
			It fulfills the condition of Proposition \ref{PerfectCriterion} and is thus contra-perfect (one has a similar decomposition as in equation \ref{EqDecomposition}). The parametrix is simply induced by the parametrix for the algebra $B$.
		\end{proof}
	\end{theorem}

	\section{Smooth Fréchet cdg-algebras and Fréchet twist-equivalences}
	\subsection{Fréchet twist-equivalences}
	We let $A$ be a Fréchet cdg-algebra in this section (not necessarily Fréchet-Fredholm or $m$-convex). It turns out that the contra-derived category, modelled on complete locally convex vector spaces, has too few equivalences for our applications. For example, we will consider sequences of the form
	\begin{align} \label{ExampleEq} 0 \to C^\infty(\mathbb C) \xrightarrow{\cdot z} C^\infty(\mathbb C) \to \mathbb C[[\bar z]] \to 0.\end{align}
	It is exact in the discrete sense but does not split continuously. The naive fix in this situation would be to divide out all algebraically exact sequences, i.e., to work with the so-called \emph{max-exact structure}. However, this approach does not behave well with respect to functors induced by bimodules. For example, if $M$ is exact in the discrete sense, $M \hat \otimes_{A} N$ may not be exact, even if $N$ is graded-projective. Our notion of equivalence, called \emph{Fréchet twist-equivalence}, is motivated by the fact that the sequence \ref{ExampleEq} consists of nuclear Fréchet spaces. Hence its exactness is preserved by tensoring with any Fréchet space.
	\begin{def1}
		\label{defFreTwist}
		\begin{itemize}
			\item An $A$-module $M$ is called \emph{contra-acyclic} if it is isomorphic to $0$ in $D^{ctr}(A)$.
			\item An $A$-module $M$ is called \emph{twist-acyclic} if for all $T \in \mathcal P_{A^{op}}$ we have that the complex $T \hat \otimes_A M$ is (algebraically) acyclic, i.e., if its algebraic cohomology groups are trivial.
			\item An $A$-module $M$ is called \emph{Fréchet twist-acyclic} if $F \hat \otimes_A^L M$ is (algebraically) acyclic for every module $F \in D^{ctr}(A^{op})$ whose underlying space is Fréchet.
			\item An $A$-module $M$ is called \emph{contra-perfect/twist-perfect/Fréchet twist-perfect} if there is a module $T_M \in \mathcal P_A$ and a map $T_M \to M$ whose cone is contra-acyclic/twist-acyclic/Fréchet twist-acyclic.
		\end{itemize}
	\end{def1}
	\begin{def1}
    Denote by $FrTw(A) \subset D^{ctr}(A)$ the full thick triangulated subcategory of $D^{ctr}(A)$ consisting of Fréchet twist-acyclic cdg-modules. Note that $FrTw(A)$ is closed under products. We define the Verdier quotient
		\[ D^{FrTw}(A) := D^{ctr}(A) / FrTw(A).\]
		Furthermore, denote by $D^{FrTw}_{Fré}(A)$ the full subcategory of $D^{FrTw}(A)$ of all cdg-modules which are Fréchet.
    \end{def1}
	\begin{lemma}
        \label{LemmaCohesiveStable}
		Let $M$ be a cohesive cdg-module over $A$. Then the following are equivalent:
		\begin{enumerate}
			\item $M$ is $A$-linearly continuously homotopy equivalent to $0$.
			\item $M$ is contra-acyclic.
			\item $M$ is twist-acyclic.
			\item $M$ is Fréchet twist-acylic.
		\end{enumerate}
		\begin{proof}
			The equivalence of the first two statements holds for any graded-projective $A$-module. Let us now assume that $M$ is twist-acyclic and we need to show that it is also continuously homotopy equivalent to $0$. Consider the right cdg-module $M^\vee := \text{Hom}^*_{A}(M,A)$ (we use that $A$ is a bimodule). By writing $M$ as a graded direct summand of a free $A^\#$-module, we see that $M^\vee$ is cohesive. Thus
			\[ \text{Hom}^*_{A}(M,M) = M^\vee \hat \otimes_{A^*} M \simeq 0,\]
			and so the identity on $M$ is a coboundary.
		\end{proof}
	\end{lemma}
	
	\begin{lemma}
		Let $A,B,C$ be a Fréchet cdg-algebras. Then the derived tensor product $(-) \hat \otimes^L_B (-)$ on the level of contraderived categories restricts to functors
		\begin{align}
			&D^{ctr}_{Fré}(A \hat \otimes B^{op} ) \times D^{FrTw}(B \hat \otimes C^{op} ) \to D^{FrTw}(A \hat \otimes C^{op} ) \\
			&D^{FrTw}(A \hat \otimes B^{op} ) \times D^{ctr}_{Fré}(B \hat \otimes C^{op} ) \to D^{FrTw}(A \hat \otimes C^{op} ) \\
			&D^{FrTw}_{Fré}(A \hat \otimes B^{op} ) \times D^{FrTw}_{Fré}(B \hat \otimes C^{op} ) \to D^{FrTw}_{Fré}(A \hat \otimes C^{op} ) 
		\end{align}
		\begin{proof}
			Let us prove the first statement, the others are analogous. Let $M$ and $N$ be graded-free cdg-modules over $A \hat \otimes B^{op}$ and $B \hat \otimes C^{op}$ such that the fiber of $M$ is Fréchet. We need to show that $M \hat \otimes_B N$ is Fréchet twist-acyclic if either $N$ is Fréchet twist-acyclic, or $M$ is contra-acyclic. In the latter case, $M \hat \otimes_B N$ is also contra-acyclic and hence Fréchet twist-acyclic. In the former case, let $F$ be a graded-free Fréchet cdg-module over $C \hat \otimes A^{op}$. Then
			\[ F \hat \otimes_{A \hat \otimes C^{op}} (M \hat \otimes_B N)  = (F \hat \otimes_{A} M) \hat \otimes_{B \hat \otimes C^{op}} N.\]
			The latter is algebraically acyclic as  $F \hat \otimes_{A} M$ is Fréchet and $N$ is Fréchet twist-acyclic.
		\end{proof}
	\end{lemma}
	
	\begin{lemma}
		\label{LemmaCrit}
		Suppose $A$ is a non-negatively graded Fréchet cdg-algebra and let $M$ be a cdg-module. Then $M$ is Fréchet twist-acyclic if and only if the $A^0$-module $ A^0 \hat \otimes^L_{A} M$ is Fréchet twist-acyclic.
		\begin{proof}
			Suppose that $M$ is Fréchet twist-acyclic. Then for any Fréchet dg-module $N$ over $A^0$ we have
			\[ N \hat \otimes^L_{A^0} (A^0 \hat \otimes^L_{A^*} M) = (N \hat \otimes^L_{A^0} A^0) \hat \otimes^L_{A^*} M .  \]
			and the latter is acyclic by assumption, since $N \hat \otimes^L_{A^0} A^0$ can be chosen to be Fréchet.
			Conversely, suppose that $A^0 \hat \otimes^L_{A} M$ is Fréchet twist-acyclic. Let $F$ be a Fréchet cdg-module over $A$. We can assume that $M$ itself is graded-free and that $F$ is graded-free with Fréchet fibers so that $M^\# =  A^\# \hat \otimes V $, $F^\# =   W \hat \otimes A^\#$ where $W$ is Fréchet. Then
			\[ (F \hat \otimes_{A^*} M)^\# = W \hat \otimes A^\# \hat \otimes V\]
			and we introduce the filtration $(Q^i F \hat \otimes_{A^*} M)^\# = W \hat \otimes A^{\geq i} \hat \otimes V$.
			Its filtered pieces are given by
			\begin{align} Q^i / Q^{i+1} &= (F \hat \otimes_{A^*} A^0) \hat \otimes_{A^0} A^i \hat \otimes_{A^0} (A^0 \hat \otimes_{A^*} M) \\
				&= (F \hat \otimes_{A^*} A^i) \hat \otimes_{A^0} (A^0 \hat \otimes_{A^*} M)
			\end{align}
			Since $F \hat \otimes_{A^*} A^i$ is a Fréchet dg-module over $A^0$, the assumption implies that $Q^i / Q^{i+1}$ is algebraically acyclic. As the filtration is bounded below and complete, the claim follows.
		\end{proof}
	\end{lemma}
    \begin{lemma}
        \label{LemmaFreAcyclic}
        Let $A$ be a Fréchet cdg-algebra. Let 
        \[ ...\to M_{1} \to M_0 \to 0 \]
        be a bounded above complex of Fréchet cdg-modules over $A$ with horizontal differential $d_h$. Assume that for every $i \in \mathbb Z$ and Banach space $B$ the sequences
        \[ ... \to B \hat \otimes_{\mathbb C} M_1^i \to B \hat \otimes_{\mathbb C} M_0^i \to 0 \]
        are algebraically exact. Then we have that $\text{Tot}^{\Pi}(M_*)$ is Fréchet twist-acyclic over $A$.
        \begin{proof}
            Let $F$ a be graded-free Fréchet cdg-module over $A^{op}$, $F^\# = V \hat \otimes A^\#$. We need to show that $F \hat \otimes_{A^*} \text{Tot}^{\Pi}(M_*)$ is algebraically acyclic. It is the product totalisation of a double complex of the form 
            \[ ... \to F \hat \otimes_{A^*} M_1 \to F \hat \otimes_{A^*} M_0 \to 0.\]
            Filtration by rows yields a converging spectral sequence whose $E_1$ page are products of the cohomologies of sequences
            \[ ...\to V^a \hat \otimes M_1^b \to V^a \hat \otimes M_0^b \to 0\]
            for all $a, b \in \mathbb Z$. These sequences are exact by the assumption and the Mittag-Leffler principle, Lemma \ref{ReductionToBanach}.
        \end{proof}
    \end{lemma}
	
	\begin{lemma}[Mittag-Leffler for dg-Fréchet spaces]
		\label{ReductionToBanach}
		Let $M_{n \in \mathbb N}$ be an inverse system of dg-Fréchet spaces with dense maps $M_{n+1} \to M_n$. If all $M_n$ are algebraically quasi-isomorphic to $0$, then their inverse limit $M$ is also algebraically quasi-isomorphic to $0$.
		\begin{proof}
			On each $M_{n}$ we choose a family of increasing seminorms indexed by $\mathbb N$. Let $x \in M$ be a cocycle. Fix some $n$ in $\mathbb N$ and let $y \in M_n$ with $dy = [x]_n$. Furthermore, let $\epsilon > 0$. Choose some $y' \in M_{n+1}$ with $dy' = [x]_{n+1}$. Then $d([y']_{n} - y) = 0$ hence there exists $z$ with $dz = [y']_{n} - y$. By the density of $M_{n+1} \to M_n$ there exists $z_{\epsilon} \in M_{n+1}$ such that 
			\[ ||[d[z_{\epsilon}]_n - d(z)]_k ||_n < \epsilon\]
			for all $k \leq n$. Then $\tilde y := y' - dz_{\epsilon} \in M_{n+1}$ is a primitive for $[x]_{n+1}$ which is $\epsilon$-close to $y$ in every $(M_k)_n$ for $k \leq n$ ($(M_k)_n$ being the n-th completion of $M_k$). Applying this procedure iteratively with $\epsilon := 1/n^2$ yields a primitive for $x$ in the limit $n \to \infty$.
		\end{proof}
	\end{lemma}
	
	\subsection{Smooth Fréchet cdg-algebras and the Grauert principle for algebra maps}
	\begin{def1}
		Let $A$ be a Fréchet cdg-algebra.
		We call $A$ \emph{contra-smooth/twist-smooth/Fréchet twist-smooth} if $A$ is contra-perfect/twist-perfect/Fréchet twist-perfect as a module over $A \hat \otimes A^{op}$, see Definition \ref{defFreTwist}.
	\end{def1}
    Note that $A$ being Fr\'echet twist-smooth does not imply that the dg-category
    $\mathcal P_A$ is smooth in the classical sense.
	\begin{example}
		\begin{enumerate}
			\item If $M$ is a smooth manifold, then $C^\infty(M)$ is contra-smooth (and hence smooth in all the other senses). 
			\item The pseudo-compact algebra $\mathbb C[[z_1,..,z_n]]$ is contra-smooth.
			\item The dual numbers $\mathbb C[\epsilon]/\epsilon^2$ are not twist-smooth (and hence not smooth in all the other senses).
			\item The Dolbeault algebra $\mathcal A(X)$ of a (possibly non-compact) complex manifold $X$ is Fréchet twist-smooth as we will see soon. It seems to not be contra-smooth as the map sequence in \ref{CorBanachStable} is known not to have a \emph{continuous} splitting.
		\end{enumerate}
	\end{example}
	\begin{remark}
		If $A$ is a Fréchet-Fredholm algebra which is also twist-smooth (via some cohesive module $\Delta \in \mathcal P_{\mathcal A \hat \otimes A^{op} })$ then the identity functor on $\mathcal P_{\mathcal A}$ is representable by the perfect bimodule $\Delta$ (using the Grauert principle).
	\end{remark}

	\begin{proposition}[Grauert for maps]
		\label{PropositionGrauertMaps}
		Let $A$ and $B$ be Fréchet-Fredholm cdg-algebras and let $A$ be Fréchet twist-smooth. Let
		\[ f : A \to B\]
		be a continuous map of cdg-algebras. Then there is a functor 
		\[f_{perf}^* : D^{ctr}(B) \to D^{ctr}(A) \]
		which agrees with the restriction-of-scalars functor $f^*$ after localising $D^{ctr}(A) \to D^{FrTw}(A)$. The image of the restriction $f_{perf}^*|_{\mathcal P_B}$ consists of contra-perfect objects (and thus the image of the restriction $f^*|_{\mathcal P_B}$ consists of Fréchet twist-perfect modules). Hence, $f$ induces an $A_\infty$-functor 
		\[ \mathcal P(f)^* : \mathcal P_{B} \to \mathcal P_{A}\]
		$A_\infty$-right-adjoint to the extension of scalars functor $\mathcal P(f)_* = B \hat \otimes_{A} (-) : \mathcal P_A \to \mathcal P_B$.
		\begin{proof}
			By assumption there is a cohesive $A \hat \otimes A^{op}$-module $M_\Delta$ and a Fréchet twist-equivalence $\phi : M_\Delta \to A$. The map $f : A \to B$ makes $B$ into a $A \hat \otimes B^{op}$-module. $\phi$ induces a Fréchet-twist equivalence $B \simeq M_\Delta \hat \otimes_A B  $  of $A\hat \otimes B^{op}$-modules. By writing $M_\Delta$ as a graded direct summand of a free cohesive module, we see that $M_\Delta \hat \otimes_A B$ is cohesive over $A \hat \otimes B^{op}$. The functor $f^*_{perf}$ is then defined as 
			\[ f^*_{perf} := [M_\Delta \hat \otimes_A B] \hat \otimes_B (-) : D^{ctr}(B) \to D^{ctr}(A).\]
			It preserves contraperfect objects by Theorem \ref{GrauertPrinciple}. 
		\end{proof}
	\end{proposition}
	\begin{def1}
        Consider a Fréchet cdg-algebra $A$. We define its \emph{Fréchet-twist Hochschild cohomology} as
        \[ HH^i_{FrTw}(A) := \text{Hom}_{D^{FrTw}(A \hat \otimes A^{op})}(A,A[i]).\]
        In general, there are set-theoretic issues with this definition. However, if $A$ is Fréchet twist-smooth via $\Delta \in \mathcal P_{A \hat \otimes A^{op}}$, then 
       \[ HH^i_{FrTw}(A) = H^i(\text{Hom}_{A \hat \otimes A^{op}}^*(\Delta, \Delta)).\]
    \end{def1}
    \begin{example}
        Let $X$ be a (possibly non-compact) complex manifold. Below we will show that the Dolbeault algebra $\mathcal A(X)$ is Fréchet twist-smooth via some cohesive module $\Delta$. Under the equivalence $D^b_{coh}(X \times X) \simeq H^0(\mathcal P_{\mathcal A(X) \hat \otimes \mathcal A(X)})$, the cohesive module $\Delta$ coresponds to the sheaf $\Delta_* \mathcal O_X$. Hence,
        \[ HH^*_{FrTw}(\mathcal A(X)) \cong \text{Ext}^*_{\mathcal O_{X \times X}}(\Delta_* \mathcal O_X,\Delta_* \mathcal O_X) \cong \bigoplus_{a,b} H^a(\Wedge^b TX). \]
    \end{example}
	
	\subsection{Complex manifolds and the classical Grauert Theorem}
	
	\begin{corollary}
		\label{CorBanachStable}
		For every open subset $U \subset \mathbb C$ containing $0$ and Fréchet space $F$ the following sequence is (algebraically) exact
		\[ 0 \to F \hat \otimes C^{\infty}(U) \xrightarrow{1 \otimes z} F \hat \otimes C^{\infty}(U) \xrightarrow{1 \otimes \bar T} F \hat \otimes \mathbb C[[\bar z]] \to 0 \]
		where $\bar T(f)$ is the anti-holomorphic part of the Taylor expansion of $f$.
		\begin{proof}
            The exactness of 
            \[ 0 \to C^{\infty}(U) \xrightarrow{1 \otimes z} C^{\infty}(U) \xrightarrow{1 \otimes \bar T} \mathbb C[[\bar z]] \to 0 \]
            follows from \cite[Chapter IV, Theorem 1.1]{Malgrange1966}. 
            This sequence consists of nuclear Fréchet spaces. Hence its exactness is preserved after applying $F \hat \otimes (-)$, by \cite[Chapter 1, §3.3, Corollary 1]{Grothendieck}.
		\end{proof}
	\end{corollary}

	\begin{corollary}
		\label{CorSkyscraperIsFreAcyclic}
		For every open subset $U \subset \mathbb C$ containing $0$ the totalisation of the following sequence of $\mathcal A(U)$-modules is Fréchet twist-acyclic:
		\begin{nalign}\label{EqAcyclic} 0 \to \mathcal A(U) \xrightarrow{\cdot z} \mathcal A(U) \xrightarrow{\text{ev}_0} \mathbb C \to 0.
		\end{nalign}
		\begin{proof}
			First of all, there is a contra-derived isomorphism \[\mathbb C[[\bar z]] \otimes \Wedge^* \langle d \bar z \rangle \to \mathbb C\] (this can be seen by considering natural filtrations on both modules). Instead of the sequence \ref{EqAcyclic} we may thus consider 
			\[ 0 \to \mathcal A(U) \xrightarrow{\cdot z} \mathcal A(U) \xrightarrow{\tilde T} \mathbb C[[\bar z]] \otimes \Wedge^* \langle d \bar z \rangle \to 0.\] 
			To show that the totalisation of this sequence is Fréchet twist-acyclic, it suffices to show that
			\[ 0 \to C^{\infty}(U) \xrightarrow{\cdot z} C^{\infty}(U) \xrightarrow{\tilde T} \mathbb C[[\bar z]] \to 0\]
			is algeraically acyclic and stable under tensoring with Banach spaces, by Lemma \ref{LemmaFreAcyclic}. This is indeed the case by Corollary \ref{CorBanachStable}.
		\end{proof}
	\end{corollary}
	
	\begin{lemma}
		\label{LemmaLocalReductionFre}
		Let $X$ be  complex manifold and let $M$ be an object in $D^{ctr}(\mathcal A(X))$. Then $M$ is Fréchet twist-acyclic if and only if for each $x \in X$ there exists an open neighbourhood $U$ of $x$ such that $\mathcal A(U) \hat \otimes_{\mathcal A(X)} M$ is a Fréchet twist-acyclic module over $\mathcal A(U)$.
		\begin{proof}
			First of all, if $M$ is Fréchet twist-acyclic, then $M \hat \otimes_{\mathcal A(X)} \mathcal A(U)$ is also Fréchet twist-acyclic for every $U \subset X$ and so if $M$ is locally Fréchet twist-acyclic we can find a cover $U_i$ of finite dimension such that $M$ is Fréchet twist-acyclic on each finite intersection of the $U_i$. Given any Fréchet module $T$ we have
			\[ T \hat \otimes_{\mathcal A(X)} M \simeq T \hat \otimes_{\mathcal A(X)} \text{\v C}(U_i,M)  \]
			by \cite[Proposition 1.4.5]{cHH}.
			It is filtered by the \v Cech-degree. The filtered pieces of this complex are given by products of acyclic spaces by the assumption. As the filtration is bounded below and complete we obtain the claim.
		\end{proof}
	\end{lemma}
	\begin{proposition}
		\label{PropositionFréchetTwist}
		Let $X$ be a complex manifold and $Y \subset X$ a closed complex submanifold. Then $\mathcal A(Y)$ is Fréchet twist-perfect over $\mathcal A(X)$. In particular, $\mathcal A(X)$ is Fréchet twist-smooth for every complex manifold $X$.
		\begin{proof}
			Denote by $\iota$ the inclusion $\iota : Y \to X$. The sheaf $\iota_* \mathcal O_Y$ is coherent and thus locally quasi-isomorphic to a perfect complex. In fact if one employs the standard Koszul resolution locally we see that $\iota_* \mathcal O_Y$ lies in $D^B_{perf}(\mathcal O_X)$ (the subcategory of $D^b_{coh}(X)$ of globally bounded perfect complexes). Denote by $\underline{\mathcal A_Y}$ the sheaf of Dolbeault algebras on $Y$. There is a quasi-isomorphism $\iota_* \mathcal O_Y \to \iota_* \underline{\mathcal A_Y}$ (of $\mathcal O_X$-modules). By the equivalence $D^B_{perf}(\mathcal O_X) \simeq \mathcal P_{\mathcal A(X)}$ there is a cohesive module $T_Y$ over $\mathcal P_{A(Y)}$ and a quasi-isomorphism of $\mathcal O_X$-modules 
			\[ \iota_* \mathcal O_Y \simeq \underline{\mathcal A_X} \otimes_{\mathcal A(X)} T_Y .\]
			Now by the adjunction between $D(\mathcal O_X)$ and $D(\underline{\mathcal A_X})$ we obtain a morphsim \[t \in \text{Hom}_{D(\underline{\mathcal A_X})}( \underline{\mathcal A_X} \otimes_{\mathcal A(X)} T_Y, \iota_* \underline{\mathcal A_Y}) = \text{Hom}_{K(\underline{\mathcal A_X})}(\underline{\mathcal A_X} \otimes_{\mathcal A(X)} T_Y  , \iota_* \underline{\mathcal A_Y}).\]
			The $\mathcal A_X$-module $\underline{\mathcal A_X} \otimes_{\mathcal A(X)} T_Y$ locally lies in $Perf(\mathcal A_X)$ and by the softness of $\mathcal A^0(X)$ it follows that $\underline{\mathcal A_X} \otimes_{\mathcal A(X)} T_Y$ is $K$-projective. By the graded-projectivity of $T_Y$ we have $\Gamma(X,\underline{\mathcal A_X} \otimes_{\mathcal A(X)} T_Y) = T_Y$ and so $t$ yields a map $t(X) : T_Y \to \Gamma(X, \iota_* \underline{\mathcal A_Y}) = \mathcal A(Y)$ of $\mathcal A(X)$-modules. 
			
			We claim that $t(X)$ is a Fréchet twist-equivalence. By Lemma \ref{LemmaLocalReductionFre} it suffices to show the claim locally. Now note that a Fréchet twist-equivalence between cohesive modules is always an $A$-linear homotopy equivalence and so the choice of $T_Y$ is not important. Moreover, the property of being Fréchet acyclic is stable under taking tensor-products with Fréchet modules (by definition). Hence, the claim can be reduced to the simplest case of a skyscraper $0 \in U \subset \mathbb C$ which is the precisely the content of Corollary \ref{CorSkyscraperIsFreAcyclic}.
		\end{proof}
	\end{proposition}
	\begin{corollary}[Classical higher direct image Grauert theorem for compact manifolds]
		Let $f : X \to Y$ be a holomorphic map of compact complex manifolds. Then $\mathcal A(f) : \mathcal A(Y) \to \mathcal A(X)$ induces an $A_\infty$-functor
		\[ Rf_* : \mathcal P_{A(X)} \to \mathcal P_{A(Y)}\]
		and thus a functor on $H^0(Rf_*) : D^b_{coh}(X) \to  D^b_{coh}(Y)$ on homotopy categories.
		\begin{proof}
			By Proposition \ref{PropositionFréchetTwist}, $\mathcal A(Y)$ is Fréchet-twist smooth. The statement thus follows from Proposition \ref{PropositionGrauertMaps}.
		\end{proof}
	\end{corollary}
	
	\section{Derived categories of generalized complex tori}
     \subsection{Generalized Dolbeault algebras}
Consider a vector space $V$ together with a lattice $\Gamma \subset V$, complex structure $I : V \to V$ and a (real translation invariant) bivector $\beta \in \Wedge^2 V$ and a (real translation invariant) 2-form $B \in \Wedge^2 V^\vee$. To this data we can assign \emph{the generalized Dolbeault algebra}
\[ \mathcal A(I,\beta,B) := (C^\infty(V/\Gamma) \otimes \Wedge^{(0,*)} V_{\mathbb C}^\vee, \star^{\beta}, \bar \partial, 2 \pi i B^{0,2})\]
where the multiplication $\star^{\beta}$ is defined in terms of Fourier-modes 
\begin{align}\label{starProduct} (u_{\gamma_1} \otimes \omega_1) \star (u_{\gamma_2} \otimes \omega_2) := e^{\pi i \beta(\gamma_1,\gamma_2)} u_{\gamma_1 + \gamma_2} \otimes \omega_1 \wedge \omega_2\end{align}
for $u_{\gamma}(x) := e^{2 \pi i \gamma(x)}$, $\gamma \in \Gamma^\vee$.
\begin{lemma}
    \label{NCToriAreFrFr}
	The cdg-algebra $\mathcal A(I,\beta,B)$ is Fréchet-Fredholm. In particular,
	the morphism complexes in $\mathcal P_{\mathcal A(I,\beta,B)}$ have finite-dimensional cohomology.
	\begin{proof}
		Since we did not change the differential, $\bar \partial$ still makes $C^\infty(V/\Gamma) \otimes \Wedge^{(0,*)} V_{\mathbb C}^\vee$ into a Fréchet-Fredholm (pre-)complex. Moreover, since $|e^{\pi i \beta(\gamma_1,\gamma_2)}| = 1$, a direct computation shows that the multiplication also remains of tame degree $0$.
	\end{proof}
\end{lemma}
It will be argued in \cite{DCatsTori}, on the basis of T-duality and Mirror Symmetry, that the dg-category of cohesive modules $\mathcal P_{\mathcal A(I,\beta,B)} $ is the derived category of the \emph{generalized complex structure}
\begin{align}
	\mathcal J_{I,B,\sigma} := e^{\beta} e^{B} \mathcal J_{I} e^{-B} e^{-\beta}.
\end{align}
where $\mathcal J_{I} = \begin{pmatrix}
	I &0 \\ 0 &-I^*
\end{pmatrix}$ denotes the generalized complex structure associated to $I$. In fact, we will prove in that loc. cit. that Theorem \ref{ThmNCFM} implies that $\mathcal P_{\mathcal A(I,\beta,B)}$ depends only on the generalized complex structure $\mathcal J_{I,B,\sigma}$, rather than on the choice of $I$, $B$ and $\beta$.
\begin{proposition}
	The cdg-algebra $\mathcal A(I,\beta,B)$ is Fréchet twist-smooth.
	\begin{proof}
		We consider the following cdg-subalgebra of $\mathcal A(I_1,\beta_1,B_1) \hat \otimes \mathcal A(I_1,\beta_1,B_1)^{op}$ given by
		\[ \mathcal A_- := (C^\infty(V / \Gamma) \otimes  \Wedge^* [(V_{\mathbb C}^{\vee})^{(0,1)_{I_1}} \oplus (V_{\mathbb C}^{\vee})^{(0,1)_{I_1}} ]), \wedge, \bar \partial, 2\pi i (B_1^{0,2} \otimes 1 - 1 \otimes B_1^{0,2})) \]
		where $C^\infty(V / \Gamma)$ is embedded into $C^\infty(V / \Gamma \times V/\Gamma, (\beta_1,-\beta_1))$ via $u_{\gamma^*} \mapsto u_{(\gamma^*,-\gamma^*)}$. We consider the cdg-module $\mathcal S_{x=0}$ over $\mathcal A_-$, introduced in section \ref{SkyscraperConstr}. By Lemma \ref{RefLemmaT}
		we have an isomorphism
		\[ [\mathcal A(I_1,\beta_1,B_1) \hat \otimes \mathcal A(I_1,\beta_1,B_1)^{op}] \hat \otimes_{\mathcal A_-} \mathcal S_{x = 0} \cong \mathcal A(I_1,\beta_1,B_1).\]
		of $\mathcal A(I_1,\beta_1,B_1) \hat \otimes \mathcal A(I_1,\beta_1,B_1)^{op}$-modules.
		Hence it suffices to show that $S_{x = 0}$ is Fréchet twist-perfect. The cdg-algebra $\mathcal A_-$ can naturally by made into a (soft) sheaf of cdg-algebras on $V/\Gamma$ whose curvature is locally trivialisable. Thus, $\mathcal A_-$ is locally a product of the Dolbeault algebra on $V/\Gamma$ and an exterior algebra. The result then follows from Proposition \ref{PropositionFréchetTwist} and a version of Lemma \ref{LemmaLocalReductionFre}.
	\end{proof}
\end{proposition}

\subsection{Construction of Heisenberg modules}
We will now construct Heisenberg modules in the following way. We let $V$ be a real $2n$-dimensional vector space, together with a real non-degenerate bivector $\sigma \in \Wedge^2 V$ and let $\omega^b = (\sigma^\sharp)^{-1}$ be its inverse symplectic form. We consider smooth non-commutative Torus $C^\infty(V/\Gamma, \sigma)$ on $\sigma$ where $\Gamma$ is a lattice in $V$. We have a Schauder basis consisting of $u_{\gamma}(v) = e^{2\pi i \gamma(v)}$ where $\gamma \in \Gamma^\vee$ and the non-commutative product is defined as
\[ u_{\gamma_1} \star u_{\gamma_2} := e^{\pi i \sigma(\gamma_1,\gamma_2)} u_{\gamma_1 + \gamma_2} \]
which yield the commutation relations
\[ u_{\gamma_1} \star u_{\gamma_2} = e^{2 \pi i \sigma(\gamma_1, \gamma_2)} u_{\gamma_2} \star u_{\gamma_1}.\]
We consider $(V^\vee, \sigma)$ as a symplectic vector space. To  construct the Heisenberg module for this Torus, we choose complementary Lagrangian subspaces $L,L' \subset (V^\vee, \sigma)$. Then we have the isomorphism $\rho : L' \to L^*, \rho(l) = \sigma^\flat(l)|_{L}$ and we get an isomorphism of symplectic vector spaces
\begin{align} \label{Defphi} \phi : (L \oplus L^*, \omega_{\langle \cdot, \cdot \rangle})  \cong (V^\vee, \sigma), \phi(l,\alpha) := l + \rho^{-1}(\alpha) \end{align}
where $\omega_{\langle \cdot, \cdot \rangle}( (l,\alpha), (l',\alpha') )= \alpha(l') - \alpha'(l)$.
We consider the Heisenberg Lie-algebra $\mathfrak h$ as a central extension
\[ 0 \to \mathbb C \to \mathfrak h \xrightarrow{p} L \oplus L^* \to 0 \]
such that for $v,w \in \mathfrak h$:
\[ [v,w] = 2\pi i \omega_{\langle \cdot, \cdot \rangle}(p(v),p(w)).\]
The Heisenberg Lie-algebra $\mathfrak h$ acts on $\mathcal S(L)$. For a given splitting $r$ of $p$ the action is defined as
\begin{align} \label{firstaction} (r((l,\alpha)).g)(v) =  2 \pi i \alpha(v) g(v) - \partial_{l} g (v).\end{align}
We will also use the short notation $l.g + \alpha.g = (l,\alpha).g := r((l,\alpha)).g$. This extends to an action of $C^\infty(V/\Gamma, \sigma)$ by the formula
\begin{equation} \label{ExponentiatedAction} u_{\gamma}.g := e^{\phi^{-1}(\gamma)} g \end{equation}
for $\gamma \in \Gamma^\vee \subset V^\vee$. Denote the resulting Heisenberg module by $H(\sigma)$. We also define a connection on $H(\sigma)$ by
\begin{align} \label{defNabla} \nabla : V \times \mathcal S(L) \to \mathcal S(L), v.g := (\phi^{-1} \omega^\flat)(v) .g, \end{align}
indeed
\begin{align*} \nabla_v (u_\gamma g) &= (\phi^{-1} \omega^\flat)(v).(e^{\phi^{-1}(\gamma)} .g) \\&= e^{\phi^{-1}(\gamma)}.((\phi^{-1} \omega^\flat)(v).g) + 2\pi i \omega_{\langle \cdot, \cdot \rangle}((\phi^{-1} \omega^\flat)(v), \phi^{-1}(\gamma)) u_\gamma.g\\
	&= u_\gamma. \nabla_v(g) + 2\pi i \gamma(v) u_\gamma.g = u_\gamma. \nabla_v(g) + d_v(u_\gamma). g
\end{align*}
And similarly, one computes the curvature of this connection to be
\begin{align} \label{curvatureEq} [\nabla_v, \nabla_w].g = -2\pi i \omega(v,w) g.\end{align}

Equivalently, one can view $\nabla$ as a derivation of degree $1$ on $\mathcal S(L) \otimes \Wedge^* V^\vee$, making it into a cohesive cdg-module over the non-commutative curved de-Rham algebra \[\Omega(V/\Gamma, \sigma,-\omega) = (C^\infty(V/\Gamma) \otimes \Wedge^* V^\vee, \star^{\sigma},d_{DR},-2\pi i \omega) .\]

\begin{remark}
    One has to be cautious when interpreting formula \ref{ExponentiatedAction}. For $\alpha \in L^*$, we simply let
    \[ (e^\alpha.g)(x) := e^{2\pi i \alpha(x)} g(x).\]
    On the other hand, for $l \in L$ we have
    \[ (e^l.g)(x) = (e^{-\partial_l} g)(x) = \sum_{k = 0}^\infty \frac{(-\partial_l)^k}{k!} g(x)\]
    which is not convergent for the standard set of seminorms on $\mathcal S(L)$. However, the above formula can be viewed as the asymptotic expansion of $\hbar \mapsto g(x-\hbar l)$ at $\hbar = 1$. Thus we \emph{define} $(e^l.g)(x) := g(x-l)$. For general $\alpha + v$, the formula for $e^{\alpha + v}.g$ is deduced from the Baker-Campbell-Hausdorff formula (using that $[\alpha,v]$ is central):
    \[ (e^{\alpha + v}.g)(x) = e^{[\alpha,v]/2} (e^{v}.(e^\alpha).g)(x) = e^{\pi i \alpha(2x-v)}g(x-v)\]
\end{remark}

\subsubsection{Diagonal case}
\label{diagonalcase}
It will be important to us to consider the special case where $V$ has the form \[(V,\sigma) = (W, \sigma_W) \oplus (W, -\sigma_W).\]
Then we make the following choice of Lagrangian subspaces \[L = \{ (w,-w) | w \in W^*\} \subset W^* \times W^* , L' = \{ (w,w) | w \in W^*\} \subset W^* \times W^*.\] The isomorphism \ref{Defphi} becomes
\[ \phi : L \oplus L^* \to W^* \oplus W^*, \phi( (w,-w) \oplus (\alpha, -\alpha) ) = (w,-w) + (\omega(\alpha),\omega(\alpha)).\]
It is also useful to introduce the notation $x^+ := (x,x)$ and $x^- = (x,-x)$ so that $\phi ( w^-, \alpha^- ) = w^- + \omega^\sharp(\alpha)^+$. 
Note that in this notation, $\omega_{\langle \cdot,  \cdot \rangle}( w^-, \alpha^-  ) = - \alpha^-(w^-) = -2 \alpha(w).$

\subsection{Proof of non-commutative Fourier-Mukai duality}
As before, we fix a real vector space $V$ of dimension $2n$ together with a lattice $\Gamma \subset V$. The dual lattice is defined by
\[ \Gamma^\vee := \{ \gamma \in V^\vee  |  \gamma(\Gamma) \subseteq \mathbb Z \} \subseteq V^\vee \]
We consider the following data:
\begin{itemize}
	\item Real bivectors $\beta_1 \in \Wedge^2 V$ and $\beta_2 \in \Wedge^2 V^\vee$ (viewed as a bivector on $V^\vee$).
	\item Complex structures $I_1 : V \to V$, $I_2 : V^\vee \to V^\vee$.
	\item $2$-forms: $B_1 \in \Wedge^2 V^\vee$ and $B_2 \in \Wedge V^2$ (where we identified $(V^\vee)^* = V$).
\end{itemize}
fulfilling
\begin{itemize}
	\label{fulfilling}
	\item The map $\xi := 1-\beta_2^\sharp \beta_1^\sharp : V^\vee \to V^\vee$ is invertible.
	\item We have \begin{nalign}\label{AssEqs}
		&I_2 \xi= - \xi I_1^* \\
		&(B_1^\flat)^{(0,2)_{I_1}} = (\beta_2^\sharp (\xi^*)^{-1})^{(0,2)_{I_1}} \\
		&(B_2^\flat)^{(0,2)_{I_2}} = (\beta_1^\sharp \xi^{-1})^{(0,2)_{I_2}}.
	\end{nalign}
\end{itemize}
From these data we obtain non-commutative complex Tori
\[ \mathcal A(I_1,\beta_1,B_1) = (C^\infty(V/\Gamma) \otimes \Wedge^{0,*} V_{\mathbb C}^\vee, \star^{\beta_1}, \bar \partial_{I_1}, 2\pi i B_1^{0,2})\]
and 
\[ \mathcal A(I_2,\beta_2,B_2) = (C^\infty(V^\vee/\Gamma^\vee) \otimes \Wedge^{0,*} V_{\mathbb C}, \star^{\beta_2}, \bar \partial_{I_2}, 2\pi i B_2^{0,2}).\]

On $V \times V^\vee$ we have the bivector \[\sigma^\sharp = \begin{pmatrix}
	\beta_1^\sharp & -\text{id} \\ \text{id} & -\beta_2^\sharp 
\end{pmatrix} : V^\vee \oplus V = (V \oplus V^\vee)^* \to V \oplus V^\vee \]
whereas on $V^\vee \times V$ we define
\[\rho^\sharp = \begin{pmatrix}
	\beta_2^\sharp & -\text{id} \\ \text{id} & -\beta_1^\sharp 
\end{pmatrix} : (V^\vee \oplus V)^\vee = V \oplus V^\vee \to V^\vee \oplus V.\]
Note that $\rho$ is $-\sigma$ up to swapping $V$ and $V^\vee$.
These bivectors are non-degenerate because $(1-\beta_2^\sharp \beta_1^\sharp )$ is invertible. In fact, the inverse of $\sigma$ is
\begin{align}
	\label{equation:sigma} -\omega^\flat := -(\sigma^\sharp)^{-1} = \begin{pmatrix}
		\beta_2^\sharp (\xi^*)^{-1}  & \xi^{-1}
		\\ (\xi^*)^{-1} & -\beta_1^\sharp \xi^{-1}.
	\end{pmatrix}  : V \oplus V^\vee \to V^\vee \oplus V
\end{align}

Since the off-diagonal terms of $\sigma$ and $\rho$ are integral, one has isomorphisms of cdg-algebras \begin{align*} &\mathcal A(I_1 \times I_2, \sigma, B_1 -  B_2) = \mathcal A(I_1,\beta_1,B_1) \hat \otimes \mathcal A(I_2,\beta_2,B_2)^{op}  \\
	&  \mathcal A(I_2 \times I_1, \rho, B_2 -  B_1) = \mathcal A(I_2,\beta_2,B_2) \hat \otimes \mathcal A(I_1,\beta_1,B_1)^{op}\end{align*}

\begin{proposition}
	\label{PropNonCM}
	The Heisenberg module $H(\sigma)$ together with its connection yields a cohesive module $P_1$ over $\mathcal A(I_1 \times I_2,\sigma,B_1-B_2)$ whereas the Heisenberg module $H(\rho)$ defines a cohesive module $P_2$ for $\mathcal A(I_2 \times I_1, \rho, B_2 -  B_1)$. 
	\begin{proof}
		According to \ref{curvatureEq}, the curvature of the connection constructed on $H(\sigma)$ is given by formula \ref{equation:sigma}. To define a module over $\mathcal A(I_1 \times I_2,\sigma,B_1 \otimes 1- 1 \otimes B_2)$ we must have 
        \begin{align} \label{condOmega} \omega^{(0,2)} = (B_1 \otimes 1- 1 \otimes B_2)^{(0,2)} \end{align}
        where the $(0,2)$-part is computed with respect to $I_1 \times I_2$. Equation \ref{condOmega} is equivalent to the set of equations \ref{AssEqs}.
	\end{proof}
\end{proposition}
\begin{def1}
	The cohesive cdg-modules $P_1$ and $P_2$ appearing in Proposition \ref{PropNonCM} will be referred to as the \emph{non-commutative Poincaré line bundles}.
\end{def1}

We will show that the non-commutative Poincaré line bundles yield Fréchet-twist equivalences
\begin{align*}
		P_1 \hat \otimes_{\mathcal A(I_2,\beta_2,B_2)} P_2 \simeq \mathcal A(I_1,\beta_1,B_1)[n] \text{ as } \mathcal A(I_1,\beta_1,B_1) \hat \otimes \mathcal A(I_1,\beta_1,B_1)^{op}\text{-modules}\\
		P_2 \hat \otimes_{\mathcal A(I_1,\beta_1,B_1)} P_1 \simeq \mathcal A(I_2,\beta_2,B_2)[n] \text{ as } \mathcal A(I_2,\beta_2,B_2) \hat \otimes \mathcal A(I_2,\beta_2,B_2)^{op}\text{-modules}
\end{align*}
amounting to our main result, Theorem \ref{ThmNCFM}. The proof is technical, due to non-linearities already apparent in the relations \ref{AssEqs}. In fact, many formulas are considerably simpler either when $B_1 = 0$ or $\beta_1 = 0$. Proving the theorem will occupy the rest of the paper. To state the essential line of reasoning, let us give a proof sketch first:
\begin{proof}[Structure of the proof of Theorem \ref{ThmNCFM}]
\begin{enumerate}
    \item $P_1 \hat \otimes P_2$ defines a cohesive module over $\mathcal A(I_1 \times I_2,\sigma,B_1-B_2) \hat \otimes \mathcal A(I_2 \times I_1, \rho, B_2 -  B_1)$ and in fact it is isomorphic to the Heisenberg-module associated to the bivector $\sigma \otimes 1 + 1 \otimes \rho$.
    \item After applying canonical isomorphisms and Fourier-duality, the structure constants of this Heisenberg module are computed in Lemma
    \ref{LemmaAllEqs}.
    \item To get a module $\tilde T$ over $\mathcal A(I_1 \times I_1,\beta_1 \otimes 1 - 1 \otimes \beta_1, B_1 \otimes 1 - 1 \otimes B_1)$ we need to divide out the diagonal action by $\mathcal A(I_2,\beta_2,B_2)$ which amounts to the cdg-module described in Lemma \ref{FormulasLemma}.
    \item There is \emph{commutative} cdg-subalgebra 
    \[ \mathcal A_- := (C^\infty(V / \Gamma) \otimes  \Wedge^* [(V_{\mathbb C}^{\vee})^{(0,1)_{I_1}} \oplus (V_{\mathbb C}^{\vee})^{(0,1)_{I_1}} ]), \wedge, \bar \partial, 2\pi i (B_1^{0,2} \otimes 1 - 1 \otimes B_1^{0,2})) \]
    of $\mathcal A(I_1 \times I_1,\beta_1 \otimes 1 - 1 \otimes \beta_1, B_1 \otimes 1 - 1 \otimes B_1)$. The key point is that the bimodule $\mathcal A(I_1,\beta_1,B_1)$ can be identified as the pushforward of a certain "skyscraper" sheaf $S_{x = 0}$. On the other hand, $\tilde T$ can also be described as the pushforward of a subspace of $\mathcal R \subset \tilde T$ that is invariant under the action of $\mathcal A_-$.
    \item To compare $\tilde T \cong P_1 \hat \otimes_{\mathcal A(I_2,\beta_2,B_2)} P_2 $ with $\mathcal A(I_1,\beta_1,B_1)[n]$ it thus suffices to show that $\mathcal R$ is Fréchet twist-equivalent to the skyscraper $S_{x=0}$ which, as argued in Proposition \ref{PropCruc}, can be reduced to the claim that the sequence
    \[ 0 \to C^\infty(\mathbb C) \xrightarrow{\cdot z}  C^\infty(\mathbb C) \to \mathbb C[[\bar z]] \to 0\]
    is Fréchet twist-acyclic.
    
\end{enumerate}
\end{proof}

In particular, we can also consider their tensor product $P_1 \hat \otimes_{\mathbb C} P_2$ as a module over $\mathcal A(I_1 \times I_2,\sigma,B_1-B_2) \hat \otimes \mathcal A(I_2 \times I_1, \rho, B_2 -  B_1)$. It is useful also to swap factors so that 
$\mathcal A(I_2 \times I_1, \rho, B_2 -  B_1) = \mathcal A(I_1 \times I_2, \sigma, B_1 - B_2)^{op}$. The Lagrangian $L$ constructed in \ref{diagonalcase} is then given by \begin{align*}&L = L_0 \oplus L_1 \subset (V \oplus V^\vee \oplus V^\vee \oplus V)^\vee = V^\vee \oplus V \oplus V \oplus V^\vee \\ &L_0 = \{ (\alpha,0,0,-\alpha) = \alpha^- \}, L_1 = \{ (0,v,-v,0) = v^- \}.\end{align*}The symplectic isomorphism $\phi$ takes the form
\begin{align*} &\phi : (L_0 \oplus L_1) \oplus (L_0^* \oplus L_1^*) \to (V^\vee \oplus V \oplus V \oplus V^\vee, \sigma + \rho) \\
	&\phi( (\alpha^-, v^-), (\tilde v^-, \tilde \alpha^-)) = \alpha^- + v^- + (\sigma^{-1}(\tilde v,\tilde \alpha))^+.
\end{align*}
We will furthermore use the isomorphisms $\psi_0 : V^\vee \to L_0, \psi(\alpha) = (\alpha,0,0,-\alpha)$ and $\psi_1 : V \to L_1, \psi_1(v) = (0,v,-v,0)$, so that $\mathcal S(L_0 \oplus L_1) \cong \mathcal S(V^\vee \oplus V)$.

\begin{lemma}
	\label{Action}
	Under $\phi, \psi_0$ and $\psi_1$, the symplectic vector space $(V^\vee \oplus V \oplus V \oplus V^\vee, \sigma + \rho)$ acts on $\mathcal S(V^\vee \oplus V)$ as follows:
	\begin{align*}
		(\alpha^- + v^-).g(x) &= (\alpha,v,-v,-\alpha).g(x) = -\partial_{\alpha + v}g(x) \\
		(\alpha^+ + v^+).g(x) &= (\alpha,v,v,\alpha).g(x) = 4\pi i \sigma((\alpha,v), (x_0,x_1))  g(x) \\
		&= 4\pi i [ x_0(-v) + \alpha(x_1) + \beta_1(\alpha, x_0) - \beta_2(v,x_1)] g(x) 
	\end{align*}
	where $x = (x_0, x_1) = V^\vee \oplus V$, $g \in \mathcal S(V^\vee \oplus V)$.
    \begin{proof}
        The action is defined in equation \ref{firstaction}. Then apply the isomorphisms $\phi, \psi_0$ and $\psi_1$.
    \end{proof}
\end{lemma}

It will be useful to consider the Fourier-Transformation
\begin{align*}
	&\mathcal{F} : \mathcal S(V^\vee) \xrightarrow{\cong} \Wedge^{top} V^\vee_{\mathbb C} \otimes \mathcal S(V) \\
	&\mathcal F(g)(v) := ( \Wedge^{top} V_{\mathbb C} \ni P \mapsto  \int_{V^\vee} g(\alpha) e^{2 \pi i \alpha(v)} \Wedge P)
\end{align*}
Recall the identities for fixed $\alpha_0 \in V^\vee$ and $v_0 \in V$:
\begin{nalign}
\label{FTproperties}
	&\mathcal F( -\partial_{\alpha_0} g ) (v) = 2\pi i \alpha_0(v) g(v) \\
	&\mathcal F( 2\pi i v_0 \cdot g ) (v) = \partial_{v_0} g(v).
\end{nalign}
We thus get an isomorphism $\mathcal F \otimes \text{id} : \mathcal S(V^\vee \oplus V) \cong  \Wedge^{top} V^\vee_{\mathbb C} \otimes \mathcal S(V \oplus V)$.

\begin{lemma}
	\label{LemmaAllEqs}
	Under the isomorphism $\mathcal F \otimes \text{id} : \mathcal S(V^\vee \oplus V) \cong  \Wedge^{top} V^\vee_{\mathbb C} \otimes \mathcal S(V \oplus V)$, the action in Lemma \ref{Action} of $(V^\vee \oplus V \oplus V \oplus V^\vee, \sigma + \rho)$ on $\Wedge^{top} V^\vee_{\mathbb C} \otimes \mathcal S(V \oplus V)$ becomes
	\begin{align}
		(\alpha^- + v^-).g(x) &= -\partial_{(0,v)}g(x) + 2\pi i \alpha(x_0) g(x) \\
		(\alpha^+ + v^+).g(x) &= 4\pi i [ \alpha(x_1) - \beta_2(v,x_1)] g(x) \\
		&\quad + 2\partial_{(-v + \beta_1^\sharp(\alpha),0)} g(x)
	\end{align}
	where $x = (x_0,x_1) \in V \oplus V$.
	The exponentiated action is
	\begin{nalign}
		\label{ActionformulaFull}
		&e^{\alpha^- + v^- + \tilde \alpha^+ + \tilde  v^+}.g(x^-) = \exp(2\pi i (\alpha,2\tilde \alpha - 2\beta_2^\sharp(\tilde v)) + \partial_{(-2\tilde v + 2\beta_1^\sharp(\tilde \alpha),-v)} ).g(x) \\
		&= e^{2\pi i [\alpha(x_0 + \tilde v) + \tilde \alpha(2x_1 + v) - \beta_2(\tilde v, 2x_1 + v) - \beta_1(\tilde \alpha, \alpha)] } g (x_0 - 2\tilde v + 2\beta_1^\sharp(\tilde \alpha), x_1 - v).
	\end{nalign}
	In particular,
	\begin{align*}
		e^{\alpha^- + v^-}.g(x) = e^{2\pi i \alpha(x_0)} g(x_0, x_1 - v).
	\end{align*}
	The connection is given by 
	\begin{align*} 
		&\nabla : V \times V^\vee \times V^\vee \times V \otimes M \to M\\
		&\nabla_{\alpha^+ + v^+}g(x) = \nabla_{(v,\alpha,\alpha,v)} g(x) = \omega^\flat(v,\alpha)^-.g(x) \\
		&= \partial_{(0, (\xi^*)^{-1} v - \beta_1^\sharp \xi^{-1} \alpha)} g + 2 \pi i [	-\beta_2^\sharp (\xi^*)^{-1}v + \xi^{-1} \alpha](x_0) g(x). \\
		&\nabla_{\alpha^- + v^- = (v,\alpha,-\alpha,-v)} g(x) = \omega^\flat(v,\alpha)^+.g(x) = 2\partial_{(v,0)} g(x) + 4 \pi i \alpha(x_1) g(x)
	\end{align*}
	\begin{proof}
	    This follows from Lemma \ref{Action} and the properties of Fourier-Transform as in \ref{FTproperties}. The relation between the formula for the connection and the action is given in equation \ref{defNabla} whereas the formula for $\omega$ is given in \ref{equation:sigma}.
	\end{proof}
\end{lemma}
Note that the exponentiated action determines the action of $C^\infty(V/\Gamma_{\beta_1}  \times V^\vee/\Gamma^\vee_{-\beta_2}  \times V^\vee/\Gamma^\vee_{\beta_2}  \times V/\Gamma_{-\beta_1})$ on  $\Wedge^{top} V^\vee_{\mathbb C} \otimes \mathcal S(V \oplus V)$. Moreover, the commutative subalgebra spanned by $u_{\gamma^- + (\gamma^*)^-} = u_{(\gamma^*, \gamma, -\gamma, -\gamma^*)}$ acts in the following simple way:
\begin{nalign}
	\label{eqDiagonalAction}
	(u_{(\gamma^*, \gamma, -\gamma, -\gamma^*)}).g(x) &= e^{2\pi i \gamma^*(x_0)} g( x_0, x_1 - \gamma )
	\\ &= u_{\gamma^*}(x_0)  g( x_0, x_1 - \gamma)
\end{nalign}

The smooth Heisenberg module $H(\sigma,\rho) \cong \Wedge^{top} V^\vee_{\mathbb C} \otimes \mathcal S(V \oplus V)$ endowed with the $(0,1)$-part of $\nabla$ becomes a cohesive cdg-module over $\mathcal A(I_1 \times I_2,\sigma,B_1-B_2) \hat \otimes \mathcal A(I_2 \times I_1, \rho, B_2 -  B_1)$ of the form \[ \mathcal T := (\Wedge^{top} V^\vee_{\mathbb C} \otimes \mathcal S(V \oplus V) \otimes \Wedge [(V_{\mathbb C}^{\vee})^{(0,*)_{I_1}} \oplus V_{\mathbb C}^{(0,1)_{I_2}} \oplus V_{\mathbb C}^{(0,1)_{I_2}} \oplus (V_{\mathbb C}^{\vee})^{(0,1)_{I_1}} ], \nabla^{0,1} ) \]

\begin{lemma}
	\label{FormulasLemma}
	The quotient of the cohesive  cdg-module $\mathcal T$ over $\mathcal A(I_1 \times I_2,\sigma,B_1-B_2) \hat \otimes \mathcal A(I_2 \times I_1, \rho, B_2 -  B_1)$ by the diagonal action of $\mathcal A(I_2,\beta_2,B_2)$ is equal to the \\$\mathcal A(I_1,\beta_1,B_1) \hat \otimes \mathcal A(I_1,\beta_1,B_1)^{op}$-module
	\[ \tilde{\mathcal T} := \mathcal T / \mathcal A(I_2,\beta_2,B_2) = (\mathcal T_0 \otimes \Wedge^{(0,*)_{I_1}} (V_\mathbb C^* \oplus V_\mathbb C^*), \mathbb E_0 + \mathbb E_1) \]
	where \[ \mathcal T_0 = \Wedge^{top} V^\vee_{\mathbb C} \otimes \mathcal S(V) \hat \otimes C^\infty(V/\Gamma) \otimes \Wedge^{(0,*)_{I_2}} V_{\mathbb C} \]
	is endowed with the action
	\begin{nalign} \label{EqActionQ} (u_{(\gamma_1^*,\gamma_2^*)}.g)(x_0, x_1) = &e^{2\pi i [ \frac{1}{2}(\gamma_1^* - \gamma_2^*)(x_0) + (\gamma_1^* + \gamma_2^*)(x_1) + \frac{1}{2} \beta_1(\gamma_1^*, \gamma_2^*)]} \cdot \\&\quad \cdot g( x_0 + \beta_1^\sharp(\gamma_1^* + \gamma_2^*), x_1)\end{nalign}
	and differentials
	\begin{align}\label{E0Def} \mathbb E_0 (g \otimes \omega)(x) := 2\pi i g(x) \wedge ((\xi^*)^{-1} x_0)^{(0,1)_{I_2}} \wedge \omega \end{align}
	and
	\begin{nalign} \label{EqE1} &\mathbb E_1 :  \mathcal T_0 \to \mathcal T_0  \otimes \Wedge^{(0,1)_{I_1}} (V_\mathbb C^* \oplus V_\mathbb C^*) \\
		&(-1)^{|\omega|} \mathbb E_1( g \otimes \omega) (v_-^{0,1} + \tilde v_+^{0,1}) = \iota_{v_-^{0,1} + \tilde v_+^{0,1}} (\mathbb E_1( g \otimes \omega)) = \nabla_{v_-^{0,1} + \tilde v_+^{0,1}}(g) \otimes \omega
	\end{nalign}
	for $g \in \Wedge^{top} V^\vee_{\mathbb C} \otimes \mathcal S(V) \hat \otimes C^\infty(V/\Gamma), \omega \in \Wedge^{(0,*)_{I_2}} V_{\mathbb C}$.
	\begin{proof}
		We need to identify the cokernel of 
		\begin{align*}&Q : \mathcal A(I_2,\beta_2,B_2)\hat \otimes \mathcal T \to \mathcal T  \\
			&Q(\omega,g) := (1 \otimes \omega \otimes 1 \otimes 1).g - (1 \otimes 1 \otimes \omega \otimes 1).g.
		\end{align*}
		It has the form
		\begin{align*}
			\Wedge^{top} V^\vee_{\mathbb C} \otimes \mathcal S(V) \hat \otimes \mathcal S(V) / \mathcal S(\Gamma) \otimes \Wedge [(V_{\mathbb C}^{\vee})^{(0,*)_{I_1}} \oplus V_{\mathbb C}^{(0,1)_{I_2}}  \oplus (V_{\mathbb C}^{\vee})^{(0,1)_{I_1}} ]
		\end{align*}
		where $\gamma \in \Gamma$ acts on $g \in \mathcal S(V)$ by $\gamma.g(x) = g(x - \gamma)$ in view of equation \ref{eqDiagonalAction}. Now we use the isomorphism
		\begin{align*}
			\mathcal S(V) / \mathcal S(\Gamma) \to C^\infty(V/ \Gamma), [g] \mapsto (x \mapsto \sum_{\gamma \in \Gamma} g(x + \gamma)).
		\end{align*}
		The inverse can be constructed by choosing a Schwartz function $h \in \mathcal S(V)$ satisfying $1 = \sum_{\gamma \in \Gamma} h(x + \gamma)$ for all $x$. In fact, such a choice determines a splitting \[C^\infty(V / \Gamma) \to \mathcal S(V), g \mapsto g \cdot h.\]
		The formulas for the action and connection follow from the corresponding formulas in \ref{LemmaAllEqs}.
	\end{proof}
\end{lemma}

We introduce a \emph{commutative} cdg-subalgebra of $\mathcal A(I_1,\beta_1,B_1) \hat \otimes \mathcal A(I_1,\beta_1,B_1)^{op}$ given by
\[ \mathcal A_- := (C^\infty(V / \Gamma) \otimes  \Wedge^* [(V_{\mathbb C}^{\vee})^{(0,1)_{I_1}} \oplus (V_{\mathbb C}^{\vee})^{(0,1)_{I_1}} ]), \wedge, \bar \partial, 2\pi i (B_1^{0,2} \otimes 1 - 1 \otimes B_1^{0,2})) \]
where $C^\infty(V / \Gamma)$ is embedded into $C^\infty(V / \Gamma \times V/\Gamma, (\beta_1,-\beta_1))$ via $u_{\gamma^*} \mapsto u_{(\gamma^*,-\gamma^*)}$. Consider the following subspace of $\tilde {\mathcal T}$
\begin{align}  \mathcal R &:= \Wedge^{top} V^\vee_{\mathbb C} \otimes \mathcal S(V) \hat \otimes \mathbb C \otimes \Wedge [(V_{\mathbb C}^{\vee})^{(0,1)_{I_1}} \oplus V_{\mathbb C}^{(0,1)_{I_2}}  \oplus (V_{\mathbb C}^{\vee})^{(0,1)_{I_1}}  ] \\
	& \subseteq \Wedge^{top} V^\vee_{\mathbb C} \otimes \mathcal S(V) \hat \otimes C^\infty(V / \Gamma) \otimes \Wedge [(V_{\mathbb C}^{\vee})^{(0,1)_{I_1}} \oplus V_{\mathbb C}^{(0,1)_{I_2}}  \oplus (V_{\mathbb C}^{\vee})^{(0,1)_{I_1}}]
\end{align}
where $\mathbb C$ is included into $C^\infty(V / \Gamma)$ via the unit.
\begin{lemma}
	The subspace $\mathcal R$ is invariant under the differential $\mathbb E_0 + \mathbb E_1$ and the action of $\mathcal A_-$. Hence, it defines a cohesive cdg-module over $\mathcal A_-$ and the inclusion $\mathcal R \to \tilde{\mathcal T}$ induces an isomorphism of cdg-modules over $\mathcal A(I_1,\beta_1,B_1) \hat \otimes \mathcal A(I_1,\beta_1,B_1)^{op}$:
	\[ [\mathcal A(I_1,\beta_1,B_1) \hat \otimes \mathcal A(I_1,\beta_1,B_1)^{op} ] \hat \otimes_{\mathcal A_-} \mathcal R \to \tilde {\mathcal T}.\]
	\begin{proof}
		The fact that $\mathcal R$ defines an $\mathcal A_-$-module follows from the formulas in Lemma \ref{FormulasLemma}. 
	\end{proof}
\end{lemma}

\subsubsection{Comparison with the diagonal}
\label{SkyscraperConstr}
We need to show that $\tilde T$ is Fréchet twist-equivalent to the diagonal bimodule $\mathcal A(I_1,\beta_1,B_1)$ over $\mathcal A(I_1,\beta_1,B_1) \hat \otimes \mathcal A(I_1,\beta_1,B_1)^{op}$. To do this, we first define the following "skyscraper" $\mathcal S_{x = 0}$ over the subalgeba $\mathcal A_-$. It has the form
\[ \mathcal S_{x = 0} = \Wedge ( V_{\mathbb C}^{+,*})^{(0,1)_{I_1}}  \]
where $V^+ = \{ (v,v) \in V \times V | v \in V\}$ and action
\[ \mu(\omega, \eta) = i^*(\omega(0)) \wedge \eta\]
for $\omega \in \mathcal A_-$, $i$ is the inclusion $i : V^+ \to V \oplus V$ and $\eta \in \mathcal S_{x = 0}$. It is endowed with the $0$-differential. Note that this is indeed a cdg-module as $i^*( B_1 \otimes 1 - 1 \otimes B_1 ) = 0$ so that the curvature acts trivially on $\mathcal S_{x = 0}$. 
The following Lemma is the reason for considering this module:
\begin{lemma}
	\label{RefLemmaT}
	There is an isomorphism of $\mathcal A(I_1,\beta_1,B_1) \hat \otimes \mathcal A(I_1,\beta_1,B_1)^{op}$-modules:
	\[ [\mathcal A(I_1,\beta_1,B_1) \hat \otimes \mathcal A(I_1,\beta_1,B_1)^{op}] \hat \otimes_{\mathcal A_-} \mathcal S_{x = 0} \cong \mathcal A(I_1,\beta_1,B_1).\]
    \begin{proof}
        We note that the tensor product need not be derived as $\mathcal A(I_1,\beta_1,B_1) \hat \otimes \mathcal A(I_1,\beta_1,B_1)^{op}$ is graded-free over $\mathcal A_-$ with fiber $C^\infty(V/\Gamma)$. The identity follows by computation of the left hand side. 
    \end{proof}
\end{lemma}

It will also be useful to introduce another cdg-module $\mathcal S'_{x = 0}$ of the form
\[ \mathcal S'_{x = 0} = \mathbb C[[ (V^\vee_{\mathbb C})^{(0,1)_{I_1}} ]]  \otimes \Wedge [(V_{\mathbb C}^{\vee})^{(0,1)_{I_1}} \oplus (V_{\mathbb C}^{\vee})^{(0,1)_{I_1}} ]\]
endowed with differential 
\begin{nalign}
	\label{DiffF1}
	&\mathbb F_1 : \mathbb C[[(V^\vee_{\mathbb C})^{(0,1)_{I_1}}]] \to  \mathbb C[[(V^\vee_{\mathbb C})^{(0,1)_{I_1}}]]  \otimes \Wedge^1 [(V_{\mathbb C}^{\vee})^{(0,1)_{I_1}} \oplus (V_{\mathbb C}^{\vee})^{(0,1)_{I_1}} ] \\
	&\mathbb F_1( f )(v_-^{0,1} + \tilde v_+^{0,1} ) = \nabla_{v_-^{0,1} + \tilde v_+^{0,1} } f = -2\pi i ( (B_1^\flat)^{0,2}\tilde v^{0,1}) \cdot f + 2\partial_{v^{0,1}} f
\end{nalign}
and action \begin{align} \label{Poweraction} f.g := T_{\bar \partial_{I_2},x = 0}(f) \wedge g. \end{align}
A direct computation shows that $(\mathbb F^1)^2= 2\pi i (B_1^{0,2} \otimes 1 - 1 \otimes B_1^{0,2})$.
\begin{lemma}
	\label{RefLemmaT2}
	The cdg-modules $\mathcal S'_{x = 0}$ and $\mathcal S_{x = 0}$ are isomorphic in $D^{ctr}(\mathcal A_-)$.
	\begin{proof}
		There is a natural map $\mathcal S'_{x = 0} \to \mathcal S_{x = 0}$ given by pulling back along $i : V^+ \to V \times V$ and evaluating the power series at $0$. On $\mathcal S'_{x = 0}$ one has the filtration
		\[ F^i \mathcal S'_{x = 0} =  \bigoplus_{a+b = i} [\mathbb C[[(V^\vee_{\mathbb C})^{(0,1)_{I_1}} ]]_{\geq a}  \otimes \Wedge (V_{\mathbb C}^{+,*})^{(0,1)_{I_1}} \otimes \Wedge^{\geq b} (V_{\mathbb C}^{-,*})^{(0,1)_{I_1}} ]\]
		whereas on $\mathcal S_{x = 0}$ we let $F^0 \mathcal S_{x = 0} = \mathcal S_{x = 0}, F^1 \mathcal S_{x = 0}  = 0$. All structure, including the map $\mathcal S'_{x = 0} \to \mathcal S_{x = 0}$,  is compatible with this filtration and the filtration is bounded below and complete. Furthermore it is easy to write down contracting homotopies for the quotients $F^i \mathcal S'_{x = 0} / F^{i+1} \mathcal S'_{x = 0}, i \geq 1$ and also one has $F^0 \mathcal S'_{x = 0} / F^{1} \mathcal S'_{x = 0} \cong \mathcal S_{x = 0}$. Thus, $\mathcal S'_{x = 0} \to \mathcal S_{x = 0}$ is an isomorphism in $D^{ctr}(\mathcal A_-)$.
	\end{proof}
\end{lemma}

\begin{proposition}
	\label{PropCruc}
	The cdg-modules $\Wedge^{n} (V_\mathbb C^{(1,0)_{I_2}})^*[n] \otimes \mathcal S'_{x = 0}$ and $\mathcal R$ are naturally Fréchet-twist equivalent. Consequently, $\mathcal S'_{x = 0}[n]$ is Fréchet twist-equivalent to $\mathcal R$, $2n = \dim_{\mathbb R}V$. 
	\begin{proof}
		Recall that $\mathcal R$ has the form
		\[ \mathcal R = \Wedge^{2n} V^\vee_{\mathbb C} \otimes \mathcal S(V) \hat \otimes \mathbb C \otimes \Wedge [(V_{\mathbb C}^{\vee})^{(0,1)_{I_1}} \oplus V_{\mathbb C}^{(0,1)_{I_2}}  \oplus (V_{\mathbb C}^{\vee})^{(0,1)_{I_1}} ] \] whereas 
		\[\mathcal S'_{x = 0} = \mathbb C[[(V^\vee_{\mathbb C})^{(0,1)_{I_1}} ]]  \otimes \Wedge [(V_{\mathbb C}^{\vee})^{(0,1)_{I_1}} \oplus (V_{\mathbb C}^{\vee})^{(0,1)_{I_1}} ].\]
		We use the contraction isomorphism \[\Wedge^{2n} V^\vee_{\mathbb C}  \otimes  \Wedge^* V_{\mathbb C}^{(0,1)_{I_2}} \cong \Wedge^{(n,n-*)_{I_2}} (V_\mathbb C)^*.\]
		so that $\mathcal R$ takes the form
		\[ \Wedge^{(n,n-*)_{I_2}} (V_\mathbb C)^*\otimes \mathcal S(V) \hat \otimes \mathbb C \otimes \Wedge [(V_{\mathbb C}^{\vee})^{(0,1)_{I_1}}  \oplus (V_{\mathbb C}^{\vee})^{(0,1)_{I_1}} ].\]
		We define the map $\tau : \mathcal R \to \Wedge^{n} (V_\mathbb C^{(1,0)_{I_2}})^*[n] \otimes \mathcal S'_{x = 0}$ to be
		\[ \tau(\omega \otimes g \otimes \eta) := \omega \otimes T_{\bar \partial_{I_2},0}(g) \otimes \eta\]
		where $\omega \in \Wedge^{n} (V_\mathbb C^{(1,0)_{I_2}})^*$, $g \in \mathcal S(V)$ and $\eta \in \Wedge [(V_{\mathbb C}^{\vee})^{(0,1)_{I_1}}  \oplus (V_{\mathbb C}^{\vee})^{(0,1)_{I_1}} ]$ and all other components are $0$. One has $\mathbb F_1 \circ \tau = \tau \circ \mathbb E_1$ comparing equation \ref{DiffF1} with \ref{EqE1} (using \ref{AssEqs}). Moreover, the dependence of \[((\xi^*)^{-1} x_0)^{(0,1)_{I_2}} = (\xi^*)^{-1} x_0^{(1,0)_{I_1}}\] on $x_0$ is holomorphic (with respect to $I_1$) by equation \ref{AssEqs}. By definition of $\mathbb E_0$ (\ref{E0Def}), this implies that $\tau \circ \mathbb E_0 = 0$.
		The map $\tau$ also commutes with actions by comparing \ref{EqActionQ} with the definition of the action on $\mathcal S'_{x = 0}$, equation \ref{Poweraction}. 
		
		To show that $\text{cone}(\tau)$ is Fréchet twist-acyclic, we consider the sequence
		\[ 0 \to \Wedge^{(0,n)_{I_2}} (V_\mathbb C)^* \otimes \mathcal S(V) \to \Wedge^{(0,n-1)_{I_2}} (V_\mathbb C)^* \otimes \mathcal S(V) \to ... \to \mathcal S(V) \to \mathbb C[[(V^\vee_{\mathbb C})^{0,1}]] \to 0 \]
		where the differential is induced by $\mathbb E_0$ (see Lemma \ref{FormulasLemma}) and $T_{\bar \partial_{I_2},0}$ on the right. This sequence is isomorphic to an $n$-fold projective tensor product of the sequence
		\[ 0 \to \mathcal S(\mathbb C) \xrightarrow{\cdot z} \mathcal S(\mathbb C) \to \mathbb C[[\bar z]] \to 0\]
		which is exact and stably so under tensoring with any Banach space, by Corollary \ref{CorBanachStable}. From Lemma \ref{LemmaFreAcyclic} we obtain that $\text{cone}(\tau)$ is Fréchet twist-acyclic.
	\end{proof}
\end{proposition}

\begin{proposition}
	\label{PropNCFM}
	Let $P_1$ and $P_2$ be the non-commutative Poincaré line bundles over $\mathcal A(I_1 \times I_2,\beta_1 - \beta_2,B_1-B_2)$ and $\mathcal A(I_1 \times I_2,\beta_1 - \beta_2,B_1-B_2)^{op}$, where $I_1,I_2,\beta_1,\beta_2,B_1,B_2$ are subject to the relations \ref{fulfilling}. Then there are Fréchet twist-equivalences
	\begin{align*}
		P_1 \hat \otimes_{\mathcal A(I_2,\beta_2,B_2)} P_2 \simeq \mathcal A(I_1,\beta_1,B_1)[n] \text{ as } \mathcal A(I_1,\beta_1,B_1) \hat \otimes \mathcal A(I_1,\beta_1,B_1)^{op}\text{-modules}\\
		P_2 \hat \otimes_{\mathcal A(I_1,\beta_1,B_1)} P_1 \simeq \mathcal A(I_2,\beta_2,B_2)[n] \text{ as } \mathcal A(I_2,\beta_2,B_2) \hat \otimes \mathcal A(I_2,\beta_2,B_2)^{op}\text{-modules}
	\end{align*}
	where $2n = \text{dim}_{\mathbb R}(V/\Gamma)$. 
	\begin{proof}
		We show that $P_1 \hat \otimes_{\mathcal A(I_2,\beta_2,B_2)} P_2 \simeq \mathcal A(I_1,\beta_1,B_1)[n]$. The other direction follows by symmetry. 
		By construction $P_1 \hat \otimes_{\mathcal A(I_2,\beta_2,B_2)} P_2$ is equal to $\tilde{\mathcal T}$, as given in Lemma \ref{FormulasLemma}.
		Moreover, Lemmas \ref{RefLemmaT}, \ref{RefLemmaT2} and Proposition \ref{PropCruc} yield a Fréchet twist-equivalence \[\tilde{\mathcal T} \simeq \mathcal A(I_1,\beta_1,B_1)[n], \]
		proving the claim.
	\end{proof}
\end{proposition}
From the proposition we obtain our main theorem as a corollary:
\begin{theorem}
    \label{ThmNCFM}
    The non-commutative Poincaré line bundles $P_1$ and $P_2$ over $\mathcal A(I_1 \times I_2,\beta_1 - \beta_2,B_1-B_2)$ and $\mathcal A(I_1 \times I_2,\beta_1 - \beta_2,B_1-B_2)^{op}$, where $I_1,I_2,\beta_1,\beta_2,B_1,B_2$ are subject to the relations \ref{fulfilling}, induce mutually inverse equivalences
    \begin{align*}
		&P_1 \hat \otimes (-) : D^{FrTw}_{Fré}({\mathcal A(I_1,\beta_1,B_1)}) \to  D^{FrTw}_{Fré}({\mathcal A(I_2,\beta_2,B_2)}) \\
		&P_2[-n] \hat \otimes (-) : D^{FrTw}_{Fré}({\mathcal A(I_2,\beta_2,B_2)}) \to D^{FrTw}_{Fré}({\mathcal A(I_1,\beta_1,B_1)})
	\end{align*} 
    which restrict to $A_\infty$-quasi-equivalences on the level of cohesive modules: 
    \begin{align*}
		&P_1 \hat \otimes (-) : \mathcal P_{\mathcal A(I_1,\beta_1,B_1)} \to \mathcal P_{\mathcal A(I_2,\beta_2,B_2)} \\
		&P_2[-n] \hat \otimes (-) : \mathcal P_{\mathcal A(I_2,\beta_2,B_2)} \to \mathcal P_{\mathcal A(I_1,\beta_1,B_1)}.
	\end{align*} 
    \begin{proof}
        By construction, the class of Fréchet twist-acyclic modules is stable under tensoring with other Fréchet modules. Hence, Proposition \ref{PropNCFM} implies that we obtain equivalences on the level of Fréchet modules localised along Fréchet twist-equivalences. 
        
        By Lemma \ref{LemmaCohesiveStable}, a cohesive module $M$ is Fréchet twist-acyclic if and only if it is $A$-homotopy equivalent to $0$ ( $\Leftrightarrow M \simeq 0$ in $\mathcal P_A$). Moreover, the functors induced by $P_1$ and $P_2$ restrict to cohesive modules by the Grauert-principle, Theorem \ref{GrauertPrinciple} and Lemma \ref{NCToriAreFrFr}.
    \end{proof}
\end{theorem}

\begin{corollary}
    Let $(X = V/\Gamma, I : V \to V)$ be a complex torus, $\beta \in \Wedge^2 V$ and $B \in \Wedge^2 V^\vee$ such that $\alpha := 1+B^\flat \beta^\sharp$ is invertible. Then for
    \begin{align*}
        &I_2 := -\alpha^{-1} I_1^* \alpha\\
        &B_2^\flat := \beta^\sharp \alpha \\
        &\beta^\sharp_2 := \alpha^{-1} B^\flat
    \end{align*}
    we have an $A_\infty$-quasi-equivalence
    \[\mathcal P_{\mathcal A(I,\beta,B)} \to \mathcal P_{\mathcal A(I_2,\beta_2,B_1)} \]
    where $\mathcal A(I_2,\beta_2,B_1)$ is taken with respect to $V^\vee / \Gamma^\vee$.
    In particular, if $B = 0$, we get an $A_\infty$-quasi-equivalence
     \[\mathcal P_{\mathcal A(I,\beta,0)} \to \mathcal P_{\mathcal A(-I^*, 0, B_{\beta})} \]
     where $B_\beta$ is the 2-form on $V^\vee$ defined by $B_\beta^\flat := \beta^\sharp$.
\end{corollary}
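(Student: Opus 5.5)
The plan is to apply Theorem \ref{ThmNCFM} with $I_1 := I$, $\beta_1 := \beta$, $B_1 := B$ and with $(I_2,\beta_2,B_2)$ as in the statement. The target algebra should then read $\mathcal A(I_2,\beta_2,B_2)$ on $V^\vee/\Gamma^\vee$; I take the subscript $B_1$ in the displayed target as a typo for $B_2$. So everything reduces to checking that this data satisfies the hypotheses \ref{fulfilling}. Concretely, I need:
\begin{itemize}
\item $\beta_2$ and $B_2$ are genuinely skew, i.e. a real bivector and a real $2$-form;
\item $\xi = 1-\beta_2^\sharp\beta_1^\sharp$ is invertible;
\item the three relations \ref{AssEqs} hold.
\end{itemize}
I expect all three relations to hold exactly, and not merely after projecting to $(0,2)$-parts.

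The key algebraic identity comes first. Since $B^\flat$ and $\beta^\sharp$ are skew, dualizing gives $\alpha^* = 1+\beta^\sharp B^\flat$ as an endomorphism of $V$. From this,
\[ B^\flat \alpha^* = B^\flat + B^\flat\beta^\sharp B^\flat = \alpha B^\flat, \qquad \alpha^* \beta^\sharp = \beta^\sharp + \beta^\sharp B^\flat \beta^\sharp = \beta^\sharp \alpha. \]
Skewness then follows:
\begin{itemize}
\item $(\alpha^{-1}B^\flat)^* = -B^\flat(\alpha^*)^{-1} = -\alpha^{-1}B^\flat$, so $\beta_2$ is a bivector on $V^\vee$;
\item $\beta^\sharp\alpha = \beta^\sharp + \beta^\sharp B^\flat\beta^\sharp$ is visibly skew, so $B_2$ is a $2$-form on $V^\vee$.
\end{itemize}
Next, $\xi = 1-\alpha^{-1}B^\flat\beta^\sharp = \alpha^{-1}(\alpha - B^\flat\beta^\sharp) = \alpha^{-1}$, which is invertible by hypothesis.

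The three relations then become direct substitutions.
\begin{itemize}
\item First relation: $I_2\xi = -\alpha^{-1}I^*\alpha\alpha^{-1} = -\xi I_1^*$.
\item Third relation: $B_2^\flat = \beta^\sharp\alpha = \beta_1^\sharp\xi^{-1}$, so it holds before taking $(0,2)_{I_2}$-parts.
\item Second relation: $\beta_2^\sharp(\xi^*)^{-1} = \alpha^{-1}B^\flat\alpha^* = B^\flat = B_1^\flat$ by the identity above, so it also holds before taking $(0,2)_{I_1}$-parts.
\end{itemize}
With this, Theorem \ref{ThmNCFM} gives the $A_\infty$-quasi-equivalence $P_1\hat\otimes(-)$. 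For $B=0$ we have $\alpha = 1$, hence $I_2 = -I^*$, $\beta_2 = 0$ and $B_2^\flat = \beta^\sharp = B_\beta^\flat$, which is the special case stated.

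The only step I expect to need care is sign conventions. The paper identifies $(V^\vee)^* = V$, and with it the meaning of $\flat,\sharp$ and of duals of skew maps. A sign slip there would change $\xi$ to something like $1-\alpha^{-1}B\beta+2(\cdots)$ and spoil $\xi=\alpha^{-1}$. I would fix these conventions first by checking them against the $B=0$ case and against formula \ref{equation:sigma}, before carrying out the computation above.
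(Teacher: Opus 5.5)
Your proposal is correct and follows the intended route. The paper gives no separate argument for this corollary: it is an immediate specialization of Theorem \ref{ThmNCFM}, and your checks ($\xi=\alpha^{-1}$, skewness of $\beta_2$ and $B_2$ via $B^\flat\alpha^*=\alpha B^\flat$, and the relations \ref{AssEqs} holding exactly) are what that specialization needs. You are also right that $B_1$ in the target is a typo for $B_2$. The one hypothesis you did not list, that $I_2$ is a complex structure on $V^\vee$, follows at once from $I_2^2=\alpha^{-1}(I^*)^2\alpha=-1$.
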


Consider an elliptic curve $X = (V/\Gamma, I : V\to V)$, $\text{dim}_{\mathbb R}(V) = 2$. From the point of view of Hochschild cohomology, $X$ has no non-commutative deformations, as \[0 = H^0(\Wedge^2 TX) \subseteq HH^2(X).\]
However, any \emph{real} bivector results in a deformed Dolbeault algebra $\mathcal A(I,\beta,0)$ with the star product defined as in \ref{starProduct}. It has been observed by Polishchuk and Schwarz \cite{polishchuk2003categories} that a certain subcategory of $D^b_{coh}$ can be matched between elliptic curves and their non-commutative analogues. We have the following generalizations:
\begin{corollary}
    Let $X = V/\Gamma$ be a complex torus and let $\beta,\beta' \in \Wedge^2 V$ be real bivectors. If $(\beta)^{(2,0)} = (\beta')^{(2,0)}$ then we have an $A_\infty$-quasi-equivalence \[ \mathcal P_{\mathcal A(I,\beta,0)} \to \mathcal P_{\mathcal A(I,\beta',0)}.\]
    \begin{proof}
        Define a 2-form $B$ on $V^\vee$ by the formula $B^\flat = \beta'^\sharp$. Hence \[(B^\flat)^{(0,2)_{-I^*}} = (\beta^\sharp)^{(0,2)_{-I^*}} = (\beta'^\sharp)^{(0,2)_{-I^*}}\] holds. The theorem tells us that
       \[ \mathcal P_{\mathcal A(I,\beta_1,0)} \simeq \mathcal P_{\mathcal A(-I^*,0,B)} \simeq \mathcal P_{\mathcal A(I,\beta_2,0)}.\]
    
    \end{proof}
\end{corollary}
\begin{corollary}[No non-commutative elliptic curves]
    \label{PolishStrength}
    Let $X = V/\Gamma$ be an elliptic curve and let $\beta \in \Wedge^2 V$ be a real bivector. Then there is an $A_\infty$-quasi-equivalence
    \[ \mathcal P_{\mathcal A(I,\beta,0)} \xrightarrow{\sim} \mathcal P_{\mathcal A(I,0,0)}.\]
\end{corollary}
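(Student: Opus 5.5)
The plan is to chain two equivalences supplied by the preceding corollaries and pass through the commutative twisted dual torus. In real dimension $2$ we have $\Wedge^2 V \cong \mathbb R$ (resp.\ $\mathbb C$ after complexification), and for a complex structure on a one-dimensional complex space, $\Wedge^2 V_{\mathbb C} = \Wedge^{(1,1)} V_{\mathbb C}$. This decomposition holds because $V_{\mathbb C} = V^{(1,0)} \oplus V^{(0,1)}$ with both summands one-dimensional, so $\Wedge^{2}V^{(1,0)}=\Wedge^2 V^{(0,1)}=0$. Hence every bivector $\beta \in \Wedge^2 V$ satisfies $\beta^{(2,0)} = 0 = 0^{(2,0)}$, and likewise $\beta^{(0,2)}=0$.

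With this observation, I would apply the corollary immediately preceding the statement, namely the one comparing $\beta$ and $\beta'$ with equal $(2,0)$-parts. Taking $\beta' = 0$ gives the $A_\infty$-quasi-equivalence
\[ \mathcal P_{\mathcal A(I,\beta,0)} \to \mathcal P_{\mathcal A(I,0,0)}. \]

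Concretely, the proof of that corollary factors through $\mathcal P_{\mathcal A(-I^*,0,B)}$ on the dual torus $V^\vee/\Gamma^\vee$, with $B^\flat = \beta^\sharp$. On a complex one-dimensional torus the $(0,2)$-part of any $2$-form also vanishes, so the curvature $2\pi i B^{0,2}$ is zero. Thus $\mathcal A(-I^*,0,B) = \mathcal A(-I^*,0,0)$ is the ordinary Dolbeault algebra of the dual elliptic curve. The composite of Theorem \ref{ThmNCFM}, applied once with $(\beta_1,B_1)=(\beta,0)$ and once with $(\beta_1,B_1)=(0,0)$, then yields the claim.

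Before invoking Theorem \ref{ThmNCFM}, I would check its hypotheses for $\beta_2 = 0$. First, $\xi = 1$ is invertible. Second, the relation $I_2\xi = -\xi I_1^*$ reduces to $I_2 = -I^*$. Third, the $(0,2)$ relations hold trivially, because every $(0,2)$-component vanishes in complex dimension one.

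The only real obstacle is this bookkeeping of types in complex dimension one, which should be immediate. No analytic input beyond Theorem \ref{ThmNCFM} and the Grauert principle, Theorem \ref{GrauertPrinciple}, is needed. Composing two $A_\infty$-quasi-equivalences induced by cohesive kernels gives an $A_\infty$-quasi-equivalence, which completes the argument.
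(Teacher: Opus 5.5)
Your proposal is correct and follows the intended route. The paper gives no separate proof of this corollary; it follows from the preceding corollary with $\beta' = 0$, because in complex dimension one $\Wedge^2 V_{\mathbb C}$ is pure type $(1,1)$, so $\beta^{(2,0)}=0$. Your check of the hypotheses of Theorem \ref{ThmNCFM} ($\xi = 1$, $I_2 = -I^*$, all $(0,2)$-parts vanishing so that $\mathcal A(-I^*,0,B)=\mathcal A(-I^*,0,0)$) is exactly the bookkeeping that factorization through the dual curve requires.
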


\bibliographystyle{alphaurl}
\bibliography{NCFMGR}

\end{document}